\newtheorem{remark}{Remark}[section]
\newtheorem{assumption}{Assumption}[section]
\definecolor{darkred}{rgb}{0.85,0,0}
\definecolor{green}{rgb}{0,0.7,0}
\def\R{{\mathbb R}}
\begin{document}
	
	\title{Analysis and approximation to parabolic optimal control problems with measure-valued controls in time}
	%OPTIMAL CONTROL OF PARABOLIC EQUATION WITH MEASURE	VALUED CONTROL IN TIM
	\author{Wei Gong\thanks{The State Key Laboratory of Scientific and Engineering Computing, Institute of Computational Mathematics \& National Center for Mathematics and Interdisciplinary Sciences, Academy of Mathematics and Systems Science, Chinese Academy of Sciences, 100190 Beijing, China. Email address: wgong@lsec.cc.ac.cn}
%		\and Evelyn Herberg\thanks{Mathematisches Institut, Universit\"at Koblenz-Landau, Campus Koblenz, Universit\"atsstrasse 1, 56070 Koblenz, Germany. Email address: herberg@uni-koblenz.de}
%		\and Michael Hinze\thanks{Mathematisches Institut, Universit\"at Koblenz-Landau, Campus Koblenz, Universit\"atsstrasse 1, 56070 Koblenz, Germany. Email address: hinze@uni-koblenz.de}
		\and Dongdong Liang\thanks{The Hong Kong Polytechnic University Shenzhen Research Institute, Shenzhen 518057, China. Email address: dongdong.liang@polyu.edu.hk}
	}
	
	\date{\today}
	
	\maketitle
	\begin{abstract}
		In this paper, we investigate an optimal control problem governed by parabolic equations with measure-valued controls over time. We establish the well-posedness of the optimal control problem and derive the first-order optimality condition using Clarke's subgradients, revealing a sparsity structure in time for the optimal control. Consequently, these optimal control problems represent a generalization of impulse control for evolution equations. To discretize the optimal control problem, we employ the space-time finite element method. Here, the state equation is approximated using piecewise linear and continuous finite elements in space, alongside a Petrov-Galerkin method utilizing piecewise constant trial functions and piecewise linear and continuous test functions in time. The control variable is discretized using the variational discretization concept. For error estimation, we initially derive a priori error estimates and stabilities for the finite element discretizations of the state and adjoint equations. Subsequently, we establish weak-* convergence for the control under the norm $\mathcal{M}(\bar I_c;L^2(\omega))$, with a convergence order of $O(h^\frac{1}{2}+\tau^\frac{1}{4})$ for the state.
		
\vspace{0.2cm}		
		{\bf Keywords: Optimal control, parabolic equation, measure valued control, finite element, error estimate} 
	\end{abstract}

	\section{\bf Introduction}\label{Se:intr}	
	{Let $\Omega\subset\R^d$ ($d=2,3$) be a convex polyhedron with boundary $\Gamma:=\partial\Omega$, and $I := (0, T)$ with $T > 0$. In this paper we consider the following optimal control problem: 
		\begin{align}\label{min-J}
			\min_{(u,q)\in X\times\mathcal{M}(\bar I_c;L^2(\omega))}
			J(u,q)
			=\frac{1}{2}\|u-u_d\|_{L^2(I;L^2(\Omega))}^2
			+\frac{\beta}{2}\|u(T)-u_T\|_{L^2(\Omega)}^2
			+\alpha\|q\|_{\mathcal{M}(\bar I_c;L^2(\omega))},
		\end{align}
		where $\mathcal{M}(\bar{I}_c;L^2(\omega))$ is the control space of vector measures that will be defined in the subsequent section, $ X:=\{v\in L^2(I;L^2(\Omega)),\ v(T)\in L^2(\Omega)\}$ is the observation space, $u_d\in L^2(I;L^2(\Omega))$ and $u_T\in L^2(\Omega)$ are given observations or target states,  $\alpha>0$ is a regularization parameter, $\beta\geq 0$ is a weight parameter. The state $u$ and the control $q\in \mathcal{M}(\bar{I}_c;L^2(\omega))$ in \eqref{min-J} are constrained by the following parabolic equation with initial data $u_0\in L^2(\Omega)$ and source $f\in L^2(I;L^2(\Omega))$:
		\begin{align}\label{PDE_state}
			\left\{
			\begin{aligned}
				\partial_t u - \Delta u & = f+\chi_{I_c\times\omega} q
				&&\mbox{in}\,\,\,\Omega\times I,\\
				u&=0 && \mbox{on}\,\, \Gamma\times I,\\
				u(0)&=u_0 && \mbox{in}\,\,\Omega ,
			\end{aligned}
			\right.
		\end{align}
		where $\omega\subset\Omega$ and $I_c\subset\subset I$ (relatively compact) denote the spatial and temporal control domains, respectively. Here $\chi_{I_c\times\omega}$ is the characteristic function of $I_c\times\omega$ taking values $1$ in $I_c\times\omega$ and $0$ otherwise, which can be viewed as a zero extension operator.  }
	
		In recent years, sparse controls of partial differential equations have garnered significant attention. Initially motivated by actuator placement, their application scope has since expanded to practical domains. Two main approaches have emerged to achieve sparsity: one involves $L^1$-norm regularization in the objective functional, while the other relies on measure-valued controls. The pioneering work in this direction is \cite{Stadler2009}, which investigated $L^1(\Omega)$ control for linear elliptic equations. Subsequently, \cite{CasasHerzogWachsmuth-2017} addressed the spatio-temporally sparse optimal control problem of semilinear parabolic equations, introducing three different sparsity-promoting terms in the objective functional: $L^1(\Omega\times I)$, $L^2(I;L^1(\Omega))$, or $L^1(\Omega;L^2(I))$. For the latter case, \cite{CasasMateosRosch-2017} provided an error estimate for its fully discrete finite element approximation, with further refinement seen in \cite{CasasMateosRosch-2018}. Additionally, \cite{HerzogStadlerWachsmuth2012} explored directional sparse control for parabolic equations, where controls exhibit sparsity in space but not necessarily in time. Notably, the sparsity pattern remains constant over time.
	
For measure-valued control problems, \cite{CasasClasonKunisch-2012} investigates elliptic equations with control space $\mathcal{M}(\Omega)$, while \cite{CasasClasonKunisch-2013} addresses parabolic equations in $L^2(I;\mathcal{M}(\Omega))$, providing error estimates for finite element approximations. For parabolic control problems in space-time measure $\mathcal{M}(I\times\Omega)$, \cite{CasasKunisch-2016} is a relevant reference. Extending the directional sparsity concept (\cite{HerzogStadlerWachsmuth2012}) to measure spaces, \cite{KunischPieperVexler-2014} examines measure-valued directional sparsity for parabolic control problems with control space $\mathcal{M}(\Omega;L^2(I))$, deriving an a priori error estimate. Optimal control of the linear second-order wave equation with measure-valued controls in $\mathcal{M}(\Omega;L^2(I))$ is discussed in \cite{KunischTrautmannVexler-2016}. In \cite{TrautmannVexlerZlotnik-2018}, the authors explore measure-valued optimal control problems for 1D wave equations with control spaces of either measure-valued functions $L_{w^*}^2(I;\mathcal{M}(\Omega))$ or vector measures $\mathcal{M}(\Omega,L^2(I))$, deriving error estimates for the optimal state variable and the error measured in the cost functional. Additionally, \cite{HerbergHinzeSchumacher2020} investigates a variational discretization of a parabolic optimal control problem with space-time measure controls, employing a Petrov-Galerkin method with piecewise constant states and piecewise linear and continuous test functions in time for temporal discretization of the state equation. References \cite{CasasVexlerZuazua-2015,CasasKunisch-2019,VexlerLeykekhmanWalter2020,HerbergHinze2020} provide further insights into initial value identification of parabolic equations in measure spaces.	
For time-dependent systems, the control problem (\ref{min-J})-(\ref{PDE_state}) posed in $\mathcal{M}(\bar I_c;L^2(\omega))$ yields controls with compact support in time. This characteristic allows for determining the optimal moments for control device actions, akin to a generalization of impulse control \cite{Bensoussan-Lions-1984,Chebotarev-2009,DuanWang-2020,DuanWangZhang-2019,Phung-Wang-Xu-2017,Qin-Wang-2017,Trelat-Wang-Zhang-2016,Yan-2016,Yang-2001,Yong-Zhang-1992,YuHuangLiu-2019}. Recall that in impulse control problems, the control $q$ in (\ref{min-J})-(\ref{PDE_state}) is replaced by
\begin{eqnarray}
q(x,t)=\sum\limits_{i=1}^m q_i(x)\otimes\delta_{\tau_i}(t),\nonumber
\end{eqnarray}
where $\delta_{\tau_i}$ denotes the Dirac delta measure concentrated at $\tau_i\in I_c\subset\subset I$, and $q_i{\otimes}\delta_{\tau_i}$ are linear functionals on space $C(\bar{I}_c;L^2(\omega))$, with $q_i\in L^2(\omega), i=1,2,\cdots, m$. Here, the impulse strengths $q_i,,i=1,2,\cdots,m,$ are optimized at prescribed time nodes $\tau_i\in (0,T)$ \cite{Gong-2013}. However, in many cases, the interest lies in optimizing both the time nodes and the impulse strengths. This motivation leads to the formulation of the generalized impulse control problem as described by (\ref{min-J})-(\ref{PDE_state}). We remark that impulse control belongs to a class of important control and has wide applications (see, for instance, \cite{Bensoussan-Lions-1984,Yang-2001}). In many cases impulse control is an interesting alternative to deal with systems that cannot be acted on by means of continuous control inputs.

	The contributions of this article are threefold. Firstly, we investigate the well-posedness of both the state equation and the optimal control problem. Additionally, we derive the first-order optimality condition, revealing that the optimal control exhibits a sparsity structure independent of space. Secondly, for the state equation approximation, we utilize piecewise linear and continuous functions in space and a Petrov-Galerkin scheme from \cite{DanielsHinzeVierling2015} in time. Specifically, we employ piecewise constant trial functions and piecewise linear and continuous test functions. We also employ a variational discretization concept for the control. Lastly, we provide an a priori error estimate for the finite element approximation of the control problem. Building upon the a priori error and stability estimates for finite element discretizations of the state and adjoint equations, we establish a convergence order of $O(h^\frac{1}{2}+\tau^\frac{1}{4})$ for the approximation of the state. Furthermore, we demonstrate weak-* convergence for the control under the $\mathcal{M}(\bar I_c;L^2(\omega))$ norm, with a convergence order of $O(h+\tau^\frac{1}{2})$ for the discrete cost functional.

	The remainder of this paper is organized as follows. Section 2 presents some preliminary results, including the definition of very weak solutions to the state equation, as well as the global and local regularity and weak-* continuity of the state variable. In Section 3, we derive the first-order optimality system and investigate the sparse structure and regularity of the optimal control. Two discrete optimal control problems and their associated optimality systems are provided in Section 4. Section 5 primarily focuses on the error analysis for the optimal control problem.

	\section{Preliminaries}
	\subsection{Notations for function spaces}
	Let $W^{k,p}(\Omega)$ $(k\in N_+\cup \{0\},\,1\le p\le \infty)$ be the usual Sobolev space defined in $\Omega$ with the norm $\Vert\cdot\Vert_{W^{k,p}(\Omega)}$. Note that  $W_0^{k,p}(\Omega)$ $(H_0^k(\Omega))$ is the closed subspace of $W^{k,p}(\Omega)$ $(H^k(\Omega))$ with null-traces on the boundary $\Gamma$. We abbreviate it by $H^k(\Omega):=W^{k,2}(\Omega)$ (resp. $H_0^k(\Omega):=W_0^{k,2}(\Omega)$) $( k\ge1)$  with norms $\Vert\cdot\Vert_{H^k(\Omega)}$, and $L^p(\Omega):=W^{0,p}(\Omega)$ that is the $p$-integrable function space in $\Omega$ with norms $\Vert\cdot\Vert_{L^p(\Omega)}$. Particularly, $L^2(D)$ $(D=\Omega,\omega)$ is a Hilbert space with inner products $(\cdot,\cdot)$ and norms $\|\cdot\|$.
	%\section{Analysis of the optimal control problem}
	%In this section we intend to prove the existence of a unique solution to problem (\ref{min-J})-(\ref{PDE_state}) and derive the corresponding first order optimality condition.
	Let ${C}(\bar I_c)$ be the Banach space consisting of continuous functions on $\bar{I}_c$ equipped with the supremum norm $\|\cdot\|_{C(\bar{I}_c)}$. Let $\mathcal{M}(\bar I_c)$ be the dual space of ${C}(\bar I_c)$ that is a Banach space under the norm
	\begin{eqnarray}
		\|v\|_{\mathcal{M}(\bar I_c)}:=\sup\left\{\int_{\bar I_c}wdv,\,\forall \omega\in {C}(\bar I_c)\,,\,\|w\|_{{C}(\bar I_c)}\leq 1\right\}\quad\forall v\in \mathcal{M}(\bar I_c),\nonumber
	\end{eqnarray}
	which can be identified with the space of regular Borel measures in $I_c$.

	For a given positive measure $\mu\in \mathcal{M}(\bar{I}_b)$ the notation $L^p(I_b,\mu;L^2(\Omega))$ $(1\le p)$ denotes the set of all functions defined on a subset $I_b\subset I$ and valued in $L^2(\Omega)$, which is a Banach space endowed with the norm
	$$
	\Vert v\Vert_{L^p(I_b,\mu;L^2(\Omega))}:=\Big(\int_{I_b}\left\Vert v(t)\right\Vert^{p}d\mu(t)\Big)^{\frac{1}{p}}
	$$
	for $p<\infty$, and
	$$
	\Vert v\Vert_{L^\infty(I_b,\mu;L^2(\Omega))}:=\mathop{\rm ess\ sup}_{t\in I_b}\left\Vert v(t)\right\Vert 
	$$
	for $p=\infty$. If $\mu$ is a Lebesgue measure, we abbreviate $L^p(I_b,\mu;L^2(\Omega))$ as $L^p(I_b;L^2(\Omega))$.

	The space $L^2(\omega;\mathcal{M}(\bar I_c))$, consisting of all weakly-* measurable functions $q:\omega\rightarrow \mathcal{M}(\bar I_c)$, is a Banach space endowed with the norm 
	\begin{eqnarray}
		\|q\|_{L^2(\omega;\mathcal{M}(\bar I_c))}:=\Big(\int_\omega \|q(x)\|^2_{\mathcal{M}(\bar I_c)}dx\Big)^{1\over 2}\quad\forall q\in L^2(\omega;\mathcal{M}(\bar I_c))\nonumber
	\end{eqnarray}
	which can be identified with the dual of $L^2(\omega;{C}(\bar I_c))$, where $L^2(\omega;{C}(\bar I_c))$ denotes the Banach space consisting of all functions defined on $\omega$ and valued in ${C}(\bar I_c)$ with the norm
	\begin{equation}
		\Vert v\Vert_{L^2(\omega;{C}(\bar I_c))}:=\Big(\int_\omega\Vert v(x)\Vert_{{C}(\bar I_c)}^2dx\Big)^\frac{1}{2}\quad\forall v\in L^2(\omega;{C}(\bar I_c)).\nonumber
	\end{equation}
	
	For any given Banach space $X$, e.g., $H^1_0(\Omega)$, $L^2(\Omega)$, etc.,  the notation $C(\bar{I}_b;X)$ denotes the set of all continuous functions on $\bar{I}_b$ and valued in $X$,  which is a Banach space under the supremum norm 
	$$\Vert v\Vert_{C(\bar{I}_b;X)}:=\mathop{\rm sup}\limits_{t\in \bar{I}_b}\left\Vert v(t)\right\Vert_X\quad\forall v\in C(\bar{I}_b;X).$$
Then, we define $\mathcal{M}(\bar I_c;L^2(\omega))$ as the space containing all countably additive measures with bounded total variations defined on the Borel sets $\mathcal{B}(\bar I_c)$ and valued in $L^2(\omega)$. For any $\mu\in \mathcal{M}(\bar I_c;L^2(\omega))$, the variation measure $|\mu|\in \mathcal{M}(\bar I_c)$ is defined as
	\begin{eqnarray}
		|\mu|(B):=\sup \Big\{\sum\limits_{n=1}^\infty\|\mu(B_n)\|_{L^2(\omega)} :\ \{B_n\}_{n=1}^\infty\subset \mathcal{B}(\bar I_c)\ \mbox{is\ the\ disjoint\ partition\ of}\ B\Big\}\nonumber
	\end{eqnarray}
	for any $B\in \mathcal{B}(\bar I_c)$, where $\mathcal{B}(\bar I_c)$ denotes the Borel set on $\bar I_c$. We denote by $|\mu|(\bar I_c)$ the total variation of $\mu$. The space $\mathcal{M}(\bar I_c;L^2(\omega))$ endowed with the norm $\|\mu\|_{\mathcal{M}(\bar I_c;L^2(\omega))}=\||\mu|\|_{\mathcal{M}(\bar I_c)}=|\mu|(\bar I_c)$ is a Banach space (cf. \cite{KunischPieperVexler-2014} and \cite[Chap 12, Sec. 3]{Lange-1983}), and that can be identified to the dual of ${C}(\bar I_c;L^2(\omega))$.
	In the following we denote by $\langle\cdot,\cdot\rangle$ the duality pairing between $\mathcal{M}(\bar I_c)$ and ${C}(\bar I_c)$, $\mathcal{M}(\bar I_c;L^2(\omega))$ and ${C}(\bar I_c;L^2(\omega))$, respectively. 
	
	For each $\mu\in \mathcal{M}(\bar I_c;L^2(\omega))$, the polar decomposition of $\mu$ consisting of the variation measure $|\mu|\in \mathcal{M}(\bar I_c)$ and a space-time function $\mu'\in L^1(I_c,|\mu|;L^2(\omega))$, where the temporal support of $\mu^\prime$ is included in the support of $|\mu|$ (cf. \cite{KunischPieperVexler-2014,Lange-1983}), such that 
	\begin{eqnarray}
		d\mu =\mu'd|\mu|(t)\quad\mbox{\rm and}\quad \langle \mu ,w\rangle_{\bar I_c\times \omega}=\int_{ I_c}(\mu'(t),w(t))d|\mu|(t)\nonumber
	\end{eqnarray}
	for any $\mu\in \mathcal{M}(\bar I_c;L^2(\omega))$ and $w\in {C}(\bar I_c;L^2(\omega))$. Furthermore, similar to equation (2.3) in \cite{KunischPieperVexler-2014} we can show that $\mu'\in L^\infty(I_c,|\mu|;L^2(\omega))$ with $\|\mu'\|_{L^\infty(I_c,|\mu|;L^2(\omega))}\leq 1$ and (cf. \cite{KunischPieperVexler-2014,Lange-1983})
	\begin{eqnarray}
		\|\mu'(t)\|_{L^2(\omega)}= 1\quad \mbox{for}\ |\mu|-\mbox{almost\  all}\ t\in \bar I_c.\label{Polar}
	\end{eqnarray}  
	
	\begin{remark}
		Since $L^2(\omega;{C}(\bar I_c))\hookrightarrow {C}(\bar I_c;L^2(\omega))$ by Minkowski's inequality, we have $\mathcal{M}(\bar I_c;L^2(\omega))\hookrightarrow L^2(\omega;\mathcal{M}(\bar I_c))$. The difference is that $L^2(\omega;\mathcal{M}(\bar I_c))$ yields measure valued functions whose temporal supports are spatial dependent, while the ones for the former are spatial independent (cf. \cite{KunischPieperVexler-2014}). Based on the above embedding, we see that for each $\mu\in \mathcal{M}(\bar I_c;L^2(\omega))$, the representation $\mu(x)\in \mathcal{M}(\bar I_c)$  is well-defined for almost all $x\in\omega$. Since $\mu'\in L^\infty(I_c,|\mu|;L^2(\omega))$, it thus also belongs to  $L^2(I_c,|\mu|;L^2(\omega))$, or equivalently, $L^2(\omega;L^2(I_c,|\mu|))$. Therefore, $\mu'(x)\in L^2(I_c,|\mu|)$ for a.e. $x\in \omega$. We can now write (cf. \cite[eq. (2.5)]{KunischPieperVexler-2014})
		\begin{eqnarray}
			d\mu(x)=\mu'(x)d|\mu|\quad \mbox{ a.e.}\ x\in \omega.\label{measure_space_form}
		\end{eqnarray} 
	\end{remark}	
	%For arbitrary $g\in L^2(I;L^2(\Omega)),\ z_T\in L^2(\Omega)$, we consider the following backward parabolic equation:
	%	\begin{align}\label{backward_PDE}
	%		\left\{
	%		\begin{aligned}
	%			-\partial_t z - \Delta z & = g
	%			&&\mbox{in}\,\,\,\Omega\times[0,T),\\
	%			z&=0 && \mbox{on}\,\, \Gamma\times [0,T),\\
	%			z(T)&=z_T && \mbox{in}\,\,\Omega .
	%		\end{aligned}
	%		\right.
	%	\end{align}
	%	It's well known that the above equation \eqref{backward_PDE} has a unique solution $z$ that lie in $ C(\bar{I};L^2(\Omega))$, and also have that $z\in L^2(I; H_0^1(\Omega))\cap H^1(I;H^{-1}(\Omega))$. Then, there exists a positive constant $C$ independent of $g,\,z_T$ such that
	%	\begin{equation}
	%	\Vert z\Vert_{L^2(I; H_0^1(\Omega))}+\Vert z\Vert_{H^1(I;H^{-1}(\Omega))}+\Vert z\Vert_{ C(\bar{I};L^2(\Omega))}\le C\left(\Vert g\Vert_{L^2(I;L^2(\Omega))}+\Vert z_T\Vert_{L^2(\Omega)}\right).\nonumber
	%	\end{equation}
	%	Moreover, if $z_T\in H^1_0(\Omega)$, we have that $z\in L^2(I;H^2(\Omega)\cap H_0^1(\Omega))\cap H^1(I;L^2(\Omega))\hookrightarrow C(\bar{I};H^1_0(\Omega))$ and the following estimate:
	%		\begin{equation}
	%		\Vert z\Vert_{L^2(I; H^2(\Omega))}+\Vert z\Vert_{H^1(I;L^2(\Omega))}+\Vert z\Vert_{ C(\bar{I};H^1_0(\Omega))}\le C\left(\Vert g\Vert_{L^2(I;L^2(\Omega))}+\Vert z_T\Vert_{H^1(\Omega)}\right),\nonumber
	%	\end{equation}
	%	where $C$ is a positive constant independent of $g,\,z_T$.
	\begin{lemma}\label{stateEexist1}
		For given $g\in L^2(I;H^{-1}(\Omega))$ and $z_T\in L^2(\Omega)$, there exists a unique solution $z\in L^2(I;H^1_0(\Omega))\cap H^1(I;H^{-1}(\Omega))$ to the following problem
		\begin{align}\label{backward_PDE}
			\left\{
			\begin{aligned}
				-\partial_t z - \Delta z & = g
				&&\mbox{in}\,\,\,\Omega\times(0,T),\\
				z&=0 && \mbox{on}\,\, \Gamma\times (0,T),\\
				z(T)&=z_T && \mbox{in}\,\,\Omega.
			\end{aligned}
			\right.
		\end{align}
		Moreover, the following estimate holds
		\begin{equation}\label{estimate_1}
			\|z\|_{C(\bar{I};L^2(\Omega))}+\|\partial_t z\|_{L^2(I;H^{-1}(\Omega))}+\| z\|_{L^2(I;H^1_0(\Omega))}\leq C\left(\| g\|_{L^2(I;H^{-1}(\Omega))}+\|z_T\|_{L^2(\Omega)}\right).
		\end{equation}
		If, in addition, $g\in L^2(I;L^2(\Omega))\ \mbox{\rm and}\ z_T\in H^1_0(\Omega)$, then $
		z\in L^2(I;H^2(\Omega)\cap H^1_0(\Omega))\cap\, H^1(I;L^2(\Omega))\cap\,C(\bar{I};H^1_0(\Omega))\nonumber
		$
		and there holds the stability estimate
		\begin{equation}%\label{estimate_2}
			\|z\|_{C(\bar{I};H^1_0(\Omega))}+\|\partial_t z\|_{L^2(I;L^2(\Omega))}+\| z\|_{L^2(I;H^2(\Omega))}\leq C\left(\| g\|_{L^2(I;L^2(\Omega))}+\|z_T\|_{H^1(\Omega)}\right).
		\end{equation}
		The constants $C>0$ is independent of $g$ and $z_T$ in the above two stability estimates.
	\end{lemma}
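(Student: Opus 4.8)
The plan is to reduce the backward problem \eqref{backward_PDE} to a standard forward parabolic equation by reversing time, and then to invoke the classical Galerkin theory together with energy estimates. Concretely, set $w(t) := z(T-t)$ and $\tilde g(t) := g(T-t)$; then $w$ solves the forward problem $\partial_t w - \Delta w = \tilde g$ in $\Omega\times(0,T)$ with $w=0$ on $\Gamma\times(0,T)$ and $w(0)=z_T$. Since all the norms appearing in the two stability estimates are invariant under the change of variables $t\mapsto T-t$, it suffices to establish existence, uniqueness and the analogous bounds for $w$, and then translate the conclusions back to $z = w(T-\cdot)$.

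For existence and uniqueness with $\tilde g\in L^2(I;H^{-1}(\Omega))$ and $z_T\in L^2(\Omega)$, I would use a spectral Galerkin approximation based on the eigenfunctions of the Dirichlet Laplacian on $\Omega$. Testing the Galerkin equations with the discrete solution $w_m$ and applying Gronwall's inequality yields a uniform bound on $w_m$ in $L^\infty(I;L^2(\Omega))\cap L^2(I;H^1_0(\Omega))$; a comparison argument in the equation then bounds $\partial_t w_m$ in $L^2(I;H^{-1}(\Omega))$. Because the problem is linear, weak compactness suffices to pass to the limit and obtain a weak solution $w\in L^2(I;H^1_0(\Omega))\cap H^1(I;H^{-1}(\Omega))$, which moreover lies in $C(\bar I;L^2(\Omega))$ via the well-known embedding $L^2(I;H^1_0(\Omega))\cap H^1(I;H^{-1}(\Omega))\hookrightarrow C(\bar I;L^2(\Omega))$. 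Uniqueness follows from linearity and the energy identity applied to the difference of two solutions, while lower semicontinuity of the norms under weak limits preserves the energy bound, which is precisely \eqref{estimate_1} after reverting the time reversal.

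For the improved regularity under $\tilde g\in L^2(I;L^2(\Omega))$ and $z_T\in H^1_0(\Omega)$, I would test (at the Galerkin level) with $\partial_t w_m$ and with $-\Delta w_m$. The first test controls $\partial_t w$ in $L^2(I;L^2(\Omega))$ and $w$ in $L^\infty(I;H^1_0(\Omega))$; the second, combined with the equation, controls $\Delta w$ in $L^2(I;L^2(\Omega))$. Here the convexity of the polyhedron $\Omega$ is essential: it guarantees the $H^2$ elliptic regularity estimate $\|w(t)\|_{H^2(\Omega)}\le C\|\Delta w(t)\|_{L^2(\Omega)}$, which upgrades the bound on $\Delta w$ to a full $L^2(I;H^2(\Omega)\cap H^1_0(\Omega))$ bound. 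The regularity $w\in H^1(I;L^2(\Omega))\cap L^2(I;H^2(\Omega)\cap H^1_0(\Omega))$ then embeds into $C(\bar I;H^1_0(\Omega))$ by interpolation, giving the second stability estimate.

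The main obstacle is the higher-regularity estimate rather than the basic well-posedness: one must justify the testing with $\partial_t w$ and $-\Delta w$ at the Galerkin level and pass to the limit carefully, and, most importantly, the $H^2$ bound relies on elliptic regularity for the Dirichlet Laplacian on the convex polyhedral domain $\Omega$, a property that would fail on a non-convex domain. All other steps are standard parabolic theory, and the constants produced are independent of $g$ and $z_T$ by construction of the energy estimates.
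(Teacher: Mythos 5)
Your proposal is correct, but it is considerably more self-contained than what the paper actually does: the paper's proof of this lemma is essentially a citation, deferring existence, uniqueness and the estimate \eqref{estimate_1} to \cite[Theorem 5.1]{JEJRSC}, calling the improved regularity ``classical'' with a pointer to \cite{LC}, and only adding the observation that $L^2(I;H^2(\Omega)\cap H^1_0(\Omega))\cap H^1(I;L^2(\Omega))\hookrightarrow C(\bar I;H^1_0(\Omega))$ yields the continuity in $H^1_0(\Omega)$. What you have written out --- time reversal to a forward problem, spectral Galerkin approximation in the Dirichlet eigenbasis, the basic energy estimate giving the $L^\infty(I;L^2)\cap L^2(I;H^1_0)\cap H^1(I;H^{-1})$ bound, the tests with $\partial_t w_m$ and $-\Delta w_m$ for the improved estimate, and the use of convexity of $\Omega$ for the elliptic bound $\|w\|_{H^2(\Omega)}\le C\|\Delta w\|_{L^2(\Omega)}$ --- is precisely the classical argument underlying the results the paper cites, so in substance the two routes coincide; yours simply reconstructs the machinery rather than invoking it. Your choice of the eigenfunction basis is the right one, since it makes both higher-order test functions admissible at the discrete level and makes the projection of $z_T$ stable in $H^1_0(\Omega)$, and your emphasis that convexity is what rescues $H^2$ regularity on a polyhedral domain, and that the final continuity statement comes from the interpolation embedding, matches exactly the two points the paper singles out.
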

	
	\begin{proof}
		The proof of the existence, uniqueness  and stability estimate \eqref{estimate_1} of solutions can be found in \cite[Theorem 5.1]{JEJRSC}. The improved regularity and stability estimate  are classical; see, e.g., \cite{LC}.
		The regularity $z\in C(\bar{I};H^1_0(\Omega))$ can be obtained from the fact that $L^2(I;H^2(\Omega)\cap H^1_0(\Omega))\cap H^1(I;L^2(\Omega))\hookrightarrow C(\bar{I};H^1_0(\Omega))$.
	\end{proof}
	
	\subsection{Well-posedness of the state equation}
	To begin with, we first investigate the well-posedness of the state equation (\ref{PDE_state}). The very weak solution of equation (\ref{PDE_state}) can be defined by transposition techniques (cf.~\cite{LionsMagenes-1972}), which will be given in the following.
	\begin{definition}\label{def_1}
		For any given $f\in L^2(I;L^2(\Omega))$, $q\in \mathcal{M}(\bar I_c;L^2(\omega))$ and $u_0\in L^2(\Omega)$, a function $u\in L^2(I;L^2(\Omega))$ is called the very weak solution of equation \eqref{PDE_state} if it satisfies
		\begin{equation}\label{identity_1}
			(u,g)_{L^2(I;L^2(\Omega))}=\int_I(f,z_g)_{L^2(\Omega)}dt+\langle q,z_g\rangle_{\bar I_c\times \omega}+(u_0,z_g(0))_{L^2(\Omega)}\quad\forall g\in L^2(I;L^2(\Omega)),
		\end{equation}
		where $z_g\in C(\bar{I}_c;L^2(\omega))$ is the solution to equation (\ref{backward_PDE}) with the right-hand side $g$ and  $z_T=0$, and $\langle \cdot,\cdot\rangle_{\bar I_c\times \omega}$ denotes the duality pairing between $\mathcal{M}(\bar I_c;L^2(\omega))$ and $C(\bar I_c;L^2(\omega))$.
	\end{definition}
	
	Since $z_T=0$ and $\Omega$ is convex, the solution of equation \eqref{backward_PDE} satisfies $z\in L^2(I;H^2(\Omega)\cap H^1_0(\Omega))\cap H^1(I;L^2(\Omega))\hookrightarrow C(\bar{I};H_0^1(\Omega))$ by Lemma \ref{stateEexist1}. Therefore, the above definition is well-defined.
	% \textcolor{red}{Therefore, throughout the paper we propose the following basic assumptions for data:
	%\begin{assumption}\label{Assumption_1}
	%	Assume that $f\in L^2(I;H^1(\Omega))$,  $u_0\in H^1_0(\Omega)$ and $u_T\in H_0^1(\Omega)$.
	%	\end{assumption}
	% 	Define
	%	\begin{eqnarray}
	%	W:=\{w\in L^2(I;H_0^1(\Omega))\cap H^1(I;L^2(\Omega)): \partial_t w+\Delta w\in L^2(I;L^2(\Omega)),w(x,T)=0\}.\nonumber
	%\end{eqnarray} 
	%It is clear that when $\Omega$ is Lipschitz, $W\hookrightarrow %L^2(I;H^{{3\over 2}+\varepsilon}(\Omega)\cap H_0^1(\Omega))$ for some %$\varepsilon \in (0,{1\over 2}]$. The maximum $\varepsilon ={1\over 2}$ can be achieved if $\Omega$ is a convex polygonal or $\partial\Omega\in C^{1,1}$, which will be assumed from now on. Moreover, we have the compact embedding $W\hookrightarrow \mathcal{C}(\bar I;L^2(\Omega))$ by Aubin-Lions Lemma. 
	%}
	
	\begin{theorem}\label{Thm:existence_state}
		Assume that $f\in L^2(I;L^2(\Omega))$, $u_0\in L^2(\Omega)$ and $q\in \mathcal{M}(\bar I_c;L^2(\omega))$, then the parabolic equation (\ref{PDE_state}) admits a unique very weak solution $u\in L^2(I;L^2(\Omega))$. Furthermore, $u\in L^2(I;H_0^1(\Omega))\cap L^\infty(I;L^2(\Omega))$ and there holds the following estimate:
		\begin{align}
			\|u\|_{L^2(I;H_0^1(\Omega))}+\|u\|_{L^\infty(I;L^2(\Omega))}\leq C(\|f\|_{L^2(I;L^2(\Omega))}+\|q\|_{\mathcal{M}(\bar I_c;L^2(\omega))}+\|u_0\|_{L^2(\Omega)}),\label{state_stability}
		\end{align}
		where $C>0$ is a constant independent of $f$, $q$ and $u_0$.
		
		In addition, assume that $I_c=(t_1,t_2)$ with $0<t_1<t_2<T$, then there exist $\hat{t}$ and $\tilde{t}$ satisfying $t_2<\hat{t}<\tilde{t}<T$, i.e.,  $(\tilde{t},T)\subseteq (\hat{t},T)\subseteq (t_2,T)$, such that $u|_{(\tilde{t},T)}\in H^1((\tilde{t},T);L^2(\Omega))\cap L^2((\tilde{t},T);H^2(\Omega)\cap H^1_0(\Omega))\hookrightarrow C([\tilde{t},T];H^1_0(\Omega))$ and
		\begin{equation}\label{local_estimate}
			\begin{split}
				&\Vert {u}\Vert_{L^2((\tilde{t},T);H^2(\Omega)\cap H_0^1(\Omega))}+\Vert {u}\Vert_{H^1((\tilde{t},T);L^2(\Omega))}+\Vert u\Vert_{C([\tilde{t},T];H^1_0(\Omega))}
				\\
				\le &C(\Vert f\Vert_{L^2((\hat{t},T);L^2(\Omega))}+\Vert u\Vert_{L^2((\hat{t},T);L^2(\Omega))})\\
				\le &C(\Vert f\Vert_{L^2(I;L^2(\Omega))}+\Vert q\Vert_{\mathcal{M}(\bar{I}_c;L^2(\omega))}+\Vert u_0\Vert_{L^2(\Omega)}),
			\end{split}
		\end{equation}
		where $C>0$ is a constant independent of $f$, $q$ and $u_0$.
	\end{theorem}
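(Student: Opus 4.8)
The plan is to establish the theorem in three stages: first the existence and uniqueness of the very weak solution, then the global stability estimate \eqref{state_stability}, and finally the local-in-time regularity improvement \eqref{local_estimate}.

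For existence and uniqueness, I would exploit the transposition framework already set up in Definition~\ref{def_1}. The identity \eqref{identity_1} defines a linear functional on the test space through $g\mapsto \int_I(f,z_g)\,dt+\langle q,z_g\rangle_{\bar I_c\times\omega}+(u_0,z_g(0))$. By Lemma~\ref{stateEexist1} applied with $z_T=0$, the solution map $g\mapsto z_g$ is bounded from $L^2(I;L^2(\Omega))$ into $C(\bar I;H^1_0(\Omega))\hookrightarrow C(\bar I_c;L^2(\omega))$, and the trace $z_g(0)$ is controlled in $L^2(\Omega)$. Hence each of the three terms is bounded linearly in $\|g\|_{L^2(I;L^2(\Omega))}$: the source term by Cauchy--Schwarz, the measure term by $\|q\|_{\mathcal M(\bar I_c;L^2(\omega))}\|z_g\|_{C(\bar I_c;L^2(\omega))}$ using the duality pairing, and the initial term likewise. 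The Riesz representation theorem in the Hilbert space $L^2(I;L^2(\Omega))$ then produces a unique $u\in L^2(I;L^2(\Omega))$ satisfying \eqref{identity_1}, and testing the defining identity against this functional immediately yields the $L^2(I;L^2(\Omega))$ bound.

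To obtain the sharper regularity $u\in L^2(I;H^1_0(\Omega))\cap L^\infty(I;L^2(\Omega))$ and the full estimate \eqref{state_stability}, I would use a density/approximation argument. I would regularize $q$ by a sequence $q_k\in L^2(I;L^2(\omega))$ (for instance by convolution in time) with $\|q_k\|$ bounded appropriately and $q_k\rightharpoonup q$ weakly-$*$ in $\mathcal M(\bar I_c;L^2(\omega))$, solve the corresponding regular parabolic problems whose solutions $u_k$ enjoy the standard energy estimate in $L^2(I;H^1_0(\Omega))\cap C(\bar I;L^2(\Omega))$, and pass to the limit. The energy estimate for $u_k$ is uniform because the right-hand side $\chi_{I_c\times\omega}q_k$ is controlled, via duality with the $C(\bar I_c;L^2(\omega))$-bounded test solutions, by $\|q\|_{\mathcal M(\bar I_c;L^2(\omega))}$; weak/weak-$*$ lower semicontinuity of the norms then transfers the bound to $u$, while uniqueness identifies the limit with the very weak solution.

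The main obstacle, and the genuinely new part, is the local regularity claim \eqref{local_estimate}. The point is that the measure $q$ is supported in $\bar I_c=[t_1,t_2]$ which is compactly contained in $I$, so on the later interval $(t_2,T)$ the equation is homogeneous in the control and the irregular forcing is switched off. I would pick cutoff times $t_2<\hat t<\tilde t<T$ and introduce a smooth temporal cutoff $\eta(t)$ vanishing near $\hat t$ and equal to $1$ on $[\tilde t,T]$; multiplying the equation by $\eta$ converts $u$ into $\eta u$ solving a parabolic problem on $(\hat t,T)$ with right-hand side $\eta f+\eta' u$ and zero initial-type data at $\hat t$, both of which lie in $L^2((\hat t,T);L^2(\Omega))$ since $u\in L^2(I;L^2(\Omega))$ already. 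The delicate issue is justifying that $\eta u$ genuinely satisfies a strong parabolic equation on $(\hat t,T)$ starting from the very weak formulation — here I would again pass through the regularized solutions $u_k$, for which the smoothing-in-time property of the parabolic semigroup gives the interior estimate, and then take limits. Applying the improved-regularity half of Lemma~\ref{stateEexist1} (in its forward-in-time form) to $\eta u$ yields membership in $H^1((\tilde t,T);L^2(\Omega))\cap L^2((\tilde t,T);H^2(\Omega)\cap H^1_0(\Omega))\hookrightarrow C([\tilde t,T];H^1_0(\Omega))$ together with the first inequality in \eqref{local_estimate}; the second inequality follows by inserting the already-established global bound \eqref{state_stability}.
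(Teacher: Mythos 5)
Your proposal is correct and follows essentially the same route as the paper: regularization of the measure control by smooth/integrable controls with uniform bounds obtained by testing against duality solutions bounded in $C(\bar I_c;L^2(\omega))$, weak and weak-$*$ limits plus lower semicontinuity for \eqref{state_stability}, and the identical temporal cutoff construction on $(\hat t,T)$ combined with the improved-regularity part of Lemma~\ref{stateEexist1} for \eqref{local_estimate}. The only cosmetic difference is that you obtain existence and uniqueness upfront via Riesz representation in $L^2(I;L^2(\Omega))$, whereas the paper extracts the very weak solution directly as the weak limit of the regularized solutions; both rest on the same transposition identity, so the substance coincides.
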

	\begin{proof}
		The proof of the existence of a unique very weak solution $u\in L^2(I;H_0^1(\Omega))\cap L^\infty(I;L^2(\Omega))$  can be found in, e.g., \cite{Gong-Hinze-Zhou-2014}, \cite[Theorem 2.2]{Gong-2013}, \cite[Theorem 2.4]{Gong-Liu-Yan-2018} or \cite[Theorem 2.4]{Meidner-Rannacher-Vexler-2011} for measure data in $\mathcal{M}([0,T])$. Here we include a brief proof for completeness.
		
		Since the state equation is linear, it suffices to consider the case either $u_0=0$, $f=0$ or $q=0$. If $q=0$, $u_0\in L^2(\Omega)$ and $f\in L^2(I;L^2(\Omega))$, it is obvious that problem (\ref{PDE_state}) admits a unique weak solution $u\in L^2(I;H_0^1(\Omega))\cap L^\infty(I;L^2(\Omega))$ satisfying (cf. \cite{Lions-1971,Thomee-2006})
		\begin{eqnarray}
			\|u\|_{L^2(I;H_0^1(\Omega))}+\|u\|_{L^\infty(I;L^2(\Omega))}\leq C(\|f\|_{L^2(I;L^2(\Omega))}+\|u_0\|_{L^2(\Omega)}).\nonumber
		\end{eqnarray}
		
		Now we consider the case $u_0=0$ and $f=0$. Let $\{q_n\}_n\subset {C}(\bar I_c\times \bar\omega)$ be the sequence converging weakly to $q$ in $\mathcal{M}(\bar I_c;L^2(\omega))$ and satisfy
		\begin{eqnarray}
			\|q_n\|_{L^1(I_c;L^2(\omega))}\leq \|q\|_{\mathcal{M}(\bar I_c;L^2(\omega))}.\nonumber
		\end{eqnarray}
		Let $u_n$ be the solution of 
		\begin{align}\label{forward_PDE_state}
			\left\{
			\begin{aligned}
				\partial_t u_n - \Delta u_n & =\chi_{I_c\times\omega} q_n
				&&\mbox{in}\,\,\,\Omega\times(0,T],\\
				u_n&=0 && \mbox{on}\,\, \partial\Omega\times (0,T],\\
				u_n|_{t=0}&=0 && \mbox{in}\,\,\Omega,
			\end{aligned}
			\right.
		\end{align}
		then one has $u_n\in L^2(I; H_0^1(\Omega))\cap H^1(I;H^{-1}(\Omega))$. Let $z$ be the solution of problem (\ref{backward_PDE}) for given $g\in \mathcal{D}(I\times \Omega)$ and $z_T=0$, it follows from Lemma \ref{stateEexist1}  that $z\in {C}(\bar I; H_0^1(\Omega))$. %the regularity theory of parabolic equations (cf. \cite{CasasKunisch-2016,Lions-1971,Thomee-2006}). 
		Then, using integration by parts we obtain
		\begin{eqnarray}
			\int_I\int_\Omega gu_ndxdt&=&\int_I\int_\Omega(-\partial_tz -\Delta z)u_ndxdt\nonumber\\
			&=&\int_{I_c} (q_n,z)_{L^2(\omega)}dt\nonumber\\
			&\leq&\|q_n\|_{L^1(I_c;L^2(\omega))}\|z\|_{L^\infty(I_c;L^2(\omega))}\nonumber\\
			&\leq&\|q\|_{\mathcal{M}(\bar I_c;L^2(\omega))}\|z\|_{L^\infty(I_c;L^2(\omega))}.
		\end{eqnarray}
		Combining the following standard estimates (cf. \cite{Casas1997,Gong-2013}):
		\begin{eqnarray}
			\|z\|_{L^\infty(I;L^2(\Omega))}\leq C\|g\|_{L^1(I;L^2(\Omega))},\quad 
			\|z\|_{L^\infty(I;L^2(\Omega))}\leq C\|g\|_{L^2(I;H^{-1}(\Omega))},\nonumber
		\end{eqnarray}
		we conclude that $\{u_n \}_n$ is bounded in the space $L^\infty(I;L^2(\Omega))$ by setting $g:=\psi_0\in \mathcal{D}(I\times \Omega)$ and using the density of $\mathcal{D}(I\times \Omega)$ in $L^1(I;L^2(\Omega))$, and also bounded in $L^2(I;H_0^1(\Omega))$ by setting $g:=\psi_0-\frac{\partial \psi_j}{\partial x_j}$, $\psi_j\in \mathcal{D}(I\times \Omega)$, $j=1,\dots,d$ and using the density of $\mathcal{D}(I\times \Omega)$ in $L^2(I;H^{-1}(\Omega))$ (cf. \cite{CasasClasonKunisch-2013,CasasKunisch-2016}), respectively. Thus, we can extract a subsequence, still denoted by $\{u_n \}_n$, such that $u_n\rightarrow u$ weakly in $L^2(I;H_0^1(\Omega))$ and $ L^\infty(I;L^2(\Omega))$. 
		
		For any $g\in L^2(I;L^2(\Omega))$, let $z_g\in H^1(I;L^2(\Omega))\cap L^2(I;H^2(\Omega)\cap H^1_0(\Omega))$ be the solution of equation \eqref{backward_PDE} with $z_T=0$. Multiplying by $z_g$ in both sides of equation (\ref{forward_PDE_state}) and integrating by parts give
		\begin{eqnarray}
			(u_n,g)_{L^2(I;L^2(\Omega))}=\int_{I_c}\int_\omega z_g(x,t)dxdq_n(t),\nonumber
		\end{eqnarray} 
		which yields the identity (\ref{identity_1}) with $f=0,\,u_0=0$ by passing to the limit in the above identity. Therefore, $u$ is the very weak solution of equation \eqref{PDE_state}. By the weak lower semicontinuity of $\Vert\cdot\Vert_{L^2(I;H^1_0(\Omega))}$ and $\Vert\cdot\Vert_{L^\infty(I;L^2(\Omega))}$,  we can obtain the estimate (\ref{state_stability}) of $u$.
		
		Since $I_c=(t_1,t_2)\subseteq (0,T)$, there exist $\hat{t}$ and $\tilde{t}$ satisfying $t_2<\hat{t}<\tilde{t}<T$, such that $(\tilde{t},T)\subseteq (\hat{t},T)\subseteq (t_2,T)$. Therefore, we consider a smooth cut-off function $\tilde{\omega}$ with the following properties:
		\begin{align*} %\label{cut_off}
%			\begin{aligned}
				\tilde{\omega}(t)\in [0,1]\quad \forall t\in [0,T];\quad
				\tilde{\omega}(t)=1\quad\forall t\in (\tilde{t},T);\quad
				\tilde{\omega}(t)=0\quad\forall t\in (0,\hat{t}].
%			\end{aligned}
		\end{align*}
		Let $\tilde{u}:=\tilde{\omega}u$. Since $\bar{I}_c\cap {\rm supp}\,\tilde{u}=\emptyset $, $\tilde{u}$ satisfies the following equation:
		\begin{align}
			\left\{
			\begin{aligned}
				\partial_t \tilde{u} - \Delta \tilde{u} & =F
				&&\mbox{in}\,\,\,\Omega\times(\hat{t},T),\\
				\tilde{u}&=0 && \mbox{on}\,\, \Gamma\times (\hat{t},T),\\
				\tilde{u}(\hat{t})&=0 && \mbox{in}\,\,\Omega,
			\end{aligned}
			\right.
		\end{align}
		where $F:=\partial_t\tilde{\omega}u+\tilde{\omega}f$. Since $F\in L^2((\hat{t},T);L^2(\Omega))$, we can obtain $\tilde{u}\in L^2((\hat{t},T);H^2(\Omega)\cap H_0^1(\Omega))\cap H^1((\hat{t},T);L^2(\Omega))$ and there holds the following estimate:
		\begin{equation}
			\begin{split}
				\Vert \tilde{u}\Vert_{L^2((\hat{t},T);H^2(\Omega)\cap H_0^1(\Omega))}+\Vert \tilde{u}\Vert_{H^1((\hat{t},T);L^2(\Omega))}&\le C\Vert F\Vert_{L^2((\hat{t},T);L^2(\Omega))} \\
				&\le C(\Vert f\Vert_{L^2((\hat{t},T);L^2(\Omega))}+\Vert u\Vert_{L^2((\hat{t},T);L^2(\Omega))})\\\nonumber
				&\le C(\Vert f\Vert_{L^2(I;L^2(\Omega))}+\Vert q\Vert_{\mathcal{M}(\bar{I}_c;L^2(\omega))}+\Vert u_0\Vert_{L^2(\Omega)}),
			\end{split}
		\end{equation}
		where we have used the estimate \eqref{state_stability}. From the above inequality we obtain 
		\begin{eqnarray*}
%			\begin{split}
				\Vert {u}\Vert_{L^2((\tilde{t},T);H^2(\Omega)\cap H_0^1(\Omega))}+\Vert {u}\Vert_{H^1((\tilde{t},T);L^2(\Omega))}&=& \Vert \tilde{u}\Vert_{L^2((\tilde{t},T);H^2(\Omega)\cap H_0^1(\Omega))}+\Vert \tilde{u}\Vert_{H^1((\tilde{t},T);L^2(\Omega))} \\
				&\le& \Vert \tilde{u}\Vert_{L^2((\hat{t},T);H^2(\Omega)\cap H_0^1(\Omega))}+\Vert \tilde{u}\Vert_{H^1((\hat{t},T);L^2(\Omega))}\\\nonumber
				&\le& C(\Vert f\Vert_{L^2(I;L^2(\Omega))}+\Vert q\Vert_{\mathcal{M}(\bar{I}_c;L^2(\omega))}+\Vert u_0\Vert_{L^2(\Omega)}).
%			\end{split}
		\end{eqnarray*}
		Therefore, we complete the proof of the estimate \eqref{local_estimate}.
%		Therefore, there hold $u|_{(\tilde{t},T)}\in H^1((\tilde{t},T);L^2(\Omega))\cap L^2((\tilde{t},T);H^2(\Omega)\cap H^1_0(\Omega))\hookrightarrow C([\tilde{t},T];H^1_0(\Omega))$ and the estimate \eqref{local_estimate}.
	\end{proof}

	With the help of Theorem \ref{Thm:existence_state}, the identity \eqref{identity_1} in Definition \ref{def_1} is equivalent to the following one
	\begin{align}\label{identity_2}
		(u,g)_*+(u(T),z_T)_{L^2(\Omega)}=\int_I(f,z)_{L^2(\Omega)}dt+\langle q,z\rangle_{\bar I_c\times \omega}+(u_0,z(0))_{L^2(\Omega)}
	\end{align}
	for any $(g,z_T)\in S\times L^2(\Omega)$, where $(u,g)_*:=(u,g)_{L^2(I;L^2(\Omega))}$ for $S:= L^2(I;L^2(\Omega))$ and  $(u,g)_*:=\langle g,u\rangle_{L^2(I;H^{-1}(\Omega)),L^2(I;H_0^1(\Omega))}$ defined by 
	\begin{equation}\label{identity_3}
		\langle g,u\rangle_{L^2(I;H^{-1}(\Omega)),L^2(I;H_0^1(\Omega))}:=\int_I\langle u,-\partial_t z\rangle_{H^1(\Omega),H^{-1}(\Omega)}+(\nabla u,\nabla z)_{L^2(\Omega),L^2(\Omega)}dt\end{equation}
	for $S:= L^2(I;H^{-1}(\Omega))$, where $z\in C(\bar{I}_c;L^2(\omega))$ satisfies (\ref{backward_PDE})  with the right-hand side $g$ and $z_T\in L^2(\Omega)$.
	
	%\begin{align}\label{backward_parabolic_2}
	%	\left\{
	%	\begin{aligned}
	%		-\partial_t z - \Delta z & = g
	%		&&\mbox{in}\,\,\,\Omega\times(0,T),\\
	%		z&=0 && \mbox{on}\,\, \Gamma\times (0,T),\\
	%		z(T)&=z_T && \mbox{in}\,\,\Omega,
	%	\end{aligned}
	%	\right.
	%\end{align}
	
	In fact, taking $\hat{z}=z-\tilde{z}$, where $\tilde{z}$ is the solution of equation (\ref{backward_PDE}) with $g=0$ and initial data $\tilde{z}(T)=z(T)=z_T$, then $\hat{z}$ satisfies (\ref{backward_PDE}) with the right-hand side $g$ and initial data $z_T=0$. In other words, $\hat{z}$ can be chosen as a test function in  Definition \ref{def_1}, i.e, 
	\begin{align*}
		(u,g)_{L^2(I;L^2(\Omega))}=&\int_I(f,\hat{z})_{L^2(\Omega)}dt+\langle q,\hat{z}\rangle_{\bar I_c\times \omega}+(u_0,\hat{z}(0))_{L^2(\Omega)}\\
		=&\int_I(f,{z})_{L^2(\Omega)}dt+\langle q,{z}\rangle_{\bar I_c\times \omega}+(u_0,{z}(0))_{L^2(\Omega)}\\
		&-\left(\int_I(f,\tilde{z})_{L^2(\Omega)}dt+\langle q,\tilde{z}\rangle_{\bar I_c\times \omega}+(u_0,\tilde{z}(0))_{L^2(\Omega)}\right)\\
		=&\int_I(f,{z})_{L^2(\Omega)}dt+\langle q,{z}\rangle_{\bar I_c\times \omega}+(u_0,{z}(0))_{L^2(\Omega)}-\mathscr{L}(z_T),
	\end{align*}
	where $\mathscr{L}(z_T):=\int_I(f,\tilde{z})_{L^2(\Omega)}dt+\langle q,\tilde{z}\rangle_{\bar I_c\times \omega}+(u_0,\tilde{z}(0))_{L^2(\Omega)}$. It  is easy to check that $\mathscr{L}$ is a bounded linear functional of $z_T\in L^2(\Omega)$. Therefore, there exists a unique $\theta\in L^2(\Omega)$ such that $\mathscr{L}(z_T)=(\theta,z_T)_{L^2(\Omega)}$ by the Riesz representation theorem. Obviously, $\theta=u(T)$. Then, the identity \eqref{identity_2} holds.
	
	In order to show that the optimal control problem \eqref{min-J} has a unique solution, we have to provide a continuity property of the control-to-observation mapping under the weak-* topology.	
	\begin{proposition}\label{Thm:continuity}
		Let $\{q_n \}_{n\in\mathbb{N}_+}\subset \mathcal{M}(\bar I_c;L^2(\omega))$ be a sequence of control variables such that $q_n\stackrel{*}{\rightharpoonup} q$ in $\mathcal{M}(\bar I_c;L^2(\omega))$. Assume that $u_n:=u(q_n)$ and $u:=u(q)$ are the corresponding solutions to the state equation (\ref{PDE_state}) associated with $q_n$ and $q$, respectively. Then we have
		\begin{align*}
			\|u_n-u\|_{L^2(I;L^2(\Omega))}\rightarrow 0\ \ \mbox{\rm and}\ \  \|u_n(\cdot,T)-u(\cdot,T)\|_{H^1(\Omega)}\rightarrow 0\quad \mbox{\rm for}\ n\to\infty.
		\end{align*}		
	\end{proposition}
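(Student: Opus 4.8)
The plan is to exploit the linearity of the state equation to reduce everything to a vanishing-control problem, deduce weak convergence of the states from the very weak formulation, upgrade it to strong $L^2(I;L^2(\Omega))$ convergence by a compactness argument, and finally transfer this to the $H^1(\Omega)$ convergence at the final time through the local regularity estimate \eqref{local_estimate}. Since \eqref{PDE_state} is linear in its data, I would first set $w_n := u_n - u$, which is the very weak solution associated with the control $q_n-q$, source $f=0$, and initial datum $u_0=0$. As $q_n\stackrel{*}{\rightharpoonup}q$, the Banach--Steinhaus theorem shows that $\{q_n-q\}$ is bounded in $\mathcal{M}(\bar I_c;L^2(\omega))$, so the stability estimate \eqref{state_stability} makes $\{w_n\}$ bounded in $L^2(I;H_0^1(\Omega))\cap L^\infty(I;L^2(\Omega))$. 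Testing the very weak identity \eqref{identity_1} against an arbitrary $g\in L^2(I;L^2(\Omega))$ gives
$$
(w_n,g)_{L^2(I;L^2(\Omega))}=\langle q_n-q,z_g\rangle_{\bar I_c\times\omega},
$$
where $z_g\in C(\bar I;H_0^1(\Omega))$, and hence $z_g\in C(\bar I_c;L^2(\omega))$, is a fixed element of the predual of $\mathcal{M}(\bar I_c;L^2(\omega))$. The weak-* convergence $q_n-q\stackrel{*}{\rightharpoonup}0$ then drives the right-hand side to zero for each $g$, so $w_n\rightharpoonup 0$ weakly in $L^2(I;L^2(\Omega))$.

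To promote this to strong convergence I would use an Aubin--Lions--Simon compactness argument. From the equation, $\partial_t w_n=\Delta w_n+\chi_{I_c\times\omega}(q_n-q)$: the first term is bounded in $L^2(I;H^{-1}(\Omega))$ by the $L^2(I;H_0^1(\Omega))$ bound on $w_n$, while the second is an $H^{-1}(\Omega)$-valued measure whose total variation is controlled by $\|q_n-q\|_{\mathcal{M}(\bar I_c;L^2(\omega))}$ through the continuous inclusion $L^2(\omega)\hookrightarrow H^{-1}(\Omega)$. Hence $\{\partial_t w_n\}$ is bounded in $\mathcal{M}(\bar I;H^{-1}(\Omega))$. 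Combined with the boundedness of $\{w_n\}$ in $L^2(I;H_0^1(\Omega))$ and the compact embedding $H_0^1(\Omega)\hookrightarrow\hookrightarrow L^2(\Omega)$, a compactness theorem of Aubin--Lions--Simon type yields relative compactness of $\{w_n\}$ in $L^2(I;L^2(\Omega))$. Since every strongly convergent subsequence must have limit $0$ by the weak convergence already established, the full sequence satisfies $\|w_n\|_{L^2(I;L^2(\Omega))}\to 0$, which is the first assertion.

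For the final-time $H^1(\Omega)$ convergence, no further compactness is needed: applying the local estimate \eqref{local_estimate} of Theorem~\ref{Thm:existence_state} to $w_n$ (with $f=0$, and with the same cut-off times $\hat t,\tilde t$, which depend only on $I_c$) gives
$$
\|w_n\|_{C([\tilde t,T];H_0^1(\Omega))}\le C\|w_n\|_{L^2((\hat t,T);L^2(\Omega))}\le C\|w_n\|_{L^2(I;L^2(\Omega))}.
$$
Since the right-hand side tends to zero, evaluating at $t=T$ yields $\|u_n(T)-u(T)\|_{H^1(\Omega)}=\|w_n(T)\|_{H^1(\Omega)}\to 0$. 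The main obstacle is the compactness step: because the control is a measure, $\partial_t w_n$ is only bounded in $\mathcal{M}(\bar I;H^{-1}(\Omega))$ rather than in $L^1(I;H^{-1}(\Omega))$, so the classical Aubin--Lions--Simon lemma does not apply verbatim and one must either invoke its extension to sequences of bounded variation in time or verify the uniform time-translation estimate $\int_{I'}\|w_n(\cdot+h)-w_n\|_{H^{-1}(\Omega)}\le |h|\,|\partial_t w_n|(\bar I)$ directly; everything else is a routine consequence of the stability and local regularity estimates already available.
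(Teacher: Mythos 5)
Your proposal is correct, and it follows the same skeleton as the paper's proof: reduce by linearity to $w_n=u_n-u$ with control $q_n-q$ and zero data, obtain weak convergence of $w_n$ by testing the very weak formulation against backward solutions $z_g$, upgrade to strong $L^2(I;L^2(\Omega))$ convergence by compactness, and then get the final-time $H^1_0(\Omega)$ convergence by applying the local estimate \eqref{local_estimate} to $w_n$ exactly as you do. The genuine difference is in the compactness step, and there your argument is in fact \emph{more} careful than the paper's: the paper deduces strong convergence from weak convergence in $L^2(I;H^1_0(\Omega))$ by invoking ``the compact embedding $L^2(I;H^1_0(\Omega))\hookrightarrow L^2(I;L^2(\Omega))$,'' but this embedding is not compact as stated (take $u_n(t,x)=\sin(nt)\phi(x)$ with $0\neq\phi\in H^1_0(\Omega)$: bounded in $L^2(I;H^1_0(\Omega))$, weakly null, yet $\|u_n\|_{L^2(I;L^2(\Omega))}\not\to 0$); some uniform control in time is indispensable, which is precisely what your Aubin--Lions--Simon argument with $\partial_t w_n$ bounded in $\mathcal{M}(\bar I;H^{-1}(\Omega))$ provides. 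Your flagged caveat is the right one and is closable by standard means: the translation estimate $\|\tau_h w_n-w_n\|_{L^1(I';H^{-1}(\Omega))}\le |h|\,|\partial_t w_n|(\bar I)$ for BV-in-time functions, interpolated against the $L^\infty(I;L^2(\Omega))$ bound and combined with Ehrling's lemma and the $L^2(I;H^1_0(\Omega))$ bound, verifies the hypotheses of Simon's compactness theorem in $L^2(I;L^2(\Omega))$. (Two small points to make this airtight: check that the very weak solution $w_n$ does satisfy $\partial_t w_n=\Delta w_n+\chi_{I_c\times\omega}(q_n-q)$ in the sense of distributions, which is routine from the definition, and note that Banach--Steinhaus applies because the predual $C(\bar I_c;L^2(\omega))$ is a Banach space, so weak-* convergent sequences are norm bounded.) In short: same route as the paper, but your treatment of the one delicate step repairs a real imprecision in the published argument.
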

	\begin{proof}
		%We follow the proof of \cite{CasasClasonKunisch-2013}. For every $n\in\mathbb{N}$, let $w_n$ be the solution of 
		%\begin{align}\label{backward_parabolic}
		%	\left\{
		%	\begin{aligned}
		%		-\partial_t w_n - \Delta w_n & = u-u_n
		%		&&\mbox{in}\,\,\,\Omega\times[0,T),\\
		%		w_n&=0 && \mbox{on}\,\, \Gamma\times [0,T),\\
		%		w_n|_{t=T}&=0 && \mbox{in}\,\,\Omega.
		%%	\end{aligned}
		%	\right.
		%\end{align}	
		%Clearly, there holds $w_n\in C(\bar{I}_c;L^2(\omega)$ and $w_n\in H^1(I;L^2(\Omega))\cap L^2(I;H^2(\Omega)\cap H^1_0(\Omega))$ by $u-u_n\in L^2(I;L^2(\Omega))$. 
		The main idea of the proof is to apply the definition of very weak solutions to $u-u_n$. To this end, we note that $u-u_n$ satisfies the equation \eqref{PDE_state} with $f=0$, $u_0=0$, and $q$ replaced by $q-q_n$. Therefore, taking any $ g\in L^2(I;H^{-1}(\Omega))$ and $z_T=0$, let $z\in C(\bar{I};L^2(\Omega))$ be the solution of equation \eqref{backward_PDE}. Using (\ref{identity_2})  there holds
		\begin{eqnarray}
			\langle g, u-u_n\rangle_{L^2(I;H^{-1}(\Omega)),L^2(I;H_0^{1}(\Omega))}=\langle q-q_n, z\rangle_{\bar I_c\times \omega}
			\to 0\quad \mbox{\rm as}\ \ n\to\infty,
		\end{eqnarray}
		which implies that $u_n\rightharpoonup u$ in $L^2(I;H^1_0(\Omega))$. Furthermore, we obtain $\|u-u_n\|_{L^2( I;L^2(\Omega))}\rightarrow 0$  by the compact embedding $L^2(I;H^1_0(\Omega))\hookrightarrow L^2( I;L^2(\Omega))$. This proves the first statement. 
		
		Below we will verify the second statement. Since $u-u_n$ satisfies the equation \eqref{PDE_state} with $f,\,u_0$ and $q$ replaced by $0$, $0$, $q-q_n$, respectively, then applying the estimate \eqref{local_estimate} in Theorem \ref{Thm:existence_state} to $u-u_n$ yields the following estimate:
		\begin{eqnarray*}
			\|(u-u_n)(T)\|_{H^1(\Omega)}\le C\|u-u_n\|_{L^2((\hat{t},T);L^2(\Omega))}
			\le C\|u-u_n\|_{L^2(I;L^2(\Omega))}
			\to 0\quad \mbox{\rm as}\ \ n\to \infty,
		\end{eqnarray*}
		where we have used the first statement.
		This finishes the proof.
	\end{proof}
	
	\section{Optimal control problems}
	With the above preparations, we are in the position to study the existence and uniqueness of solutions to  the optimal control problem (\ref{min-J})-(\ref{PDE_state}), and derive the first order optimality system and regularity results of the solution.
	\subsection{Well-posedness of the optimal control problem}\label{subsec_1}
	Recall that $X:=L^2(I;L^2(\Omega))\times L^2(\Omega)$ is the observation space, then we introduce the control-to-observation operator $S:\mathcal{M}(\bar{I}_c;L^2(\omega))\to X$ as
	$$ Sq:=(S_1q,S_2q), $$
	where $S_1q:=u_q$ and $S_2q:=u_q(T)$, and $u_q$ solves equation \eqref{PDE_state} with the control variable $q$ on the right-hand side. Theorem \ref{Thm:existence_state} and Proposition \ref{Thm:continuity} imply that the operator $S$ is well-defined, affine linear and bounded, and weak continuous under the weak-$^*$ topology in $\mathcal{M}(\bar{I}_c;L^2(\omega))$.  Obviously, the operator $S$ is injective since $I_c\subseteq I$. With the help of the control-to-observation operator $S$ the reduced cost functional of \eqref{min-J} can be defined as
	\begin{equation}\label{reduced_j}
		j(q):=J_1(q)+J_2(q)\quad \forall q\in \mathcal{M}(\bar{I}_c;L^2(\omega)),
	\end{equation}
	where $$J_1(q):=\frac{1}{2}\|S_1q-u_d\|^2_{L^2(I;L^2(\Omega))}+\frac{\beta}{2}\|S_2q-u_T\|^2_{L^2(\Omega)},\quad\ J_2(q):=\alpha\|q\|_{\mathcal M(\bar{I}_c;L^2(\omega))}.$$
	$J_1(q)$ is a quadratic functional of tracking type, which is continuous under the weak-$^*$ topology in $\mathcal M(\bar{I}_c;L^2(\omega))$ and strictly convex by the weak-$^*$ continuity and injection of the operator $S$. On the other hand,  $J_2(q)$ is weakly-$^*$ lower semicontinuous in space $\mathcal M(\bar{I}_c;L^2(\omega))$. Therefore, we conclude that the reduced functional $j$ is also weakly-$^*$ lower semicontinuous and strictly convex. With this observation we can provide the following result.
	\begin{theorem}\label{Thm:existence_OCP}
		The optimal control problem (\ref{min-J})-(\ref{PDE_state}) admits a unique solution $(\bar{u},\bar{q})\in X\times \mathcal{M}(\bar I_c;L^2(\omega))$, where $\bar{q}$ is an optimal control that minimizes the reduced cost functional $\eqref{reduced_j}$ and $\bar{u}$ is the optimal state that solves the state equation \eqref{PDE_state} associated with $\bar{q}$.
	\end{theorem}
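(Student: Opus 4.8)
The plan is to establish existence and uniqueness of the minimizer by the direct method of the calculus of variations, exploiting the structural properties of the reduced functional $j$ that were already assembled in the paragraph immediately preceding the theorem statement. The three ingredients I need are: (i) weak-* lower semicontinuity of $j$, (ii) coercivity of $j$ on $\mathcal{M}(\bar I_c;L^2(\omega))$, and (iii) sequential weak-* compactness of sublevel sets, which follows from the Banach--Alaoglu theorem once the control space is identified as a dual space. Existence then comes from the direct method, and uniqueness comes from strict convexity.

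\emph{Existence.} First I would consider a minimizing sequence $\{q_n\}_{n\in\mathbb{N}_+}\subset\mathcal{M}(\bar I_c;L^2(\omega))$ for the reduced functional $j$ defined in \eqref{reduced_j}, so that $j(q_n)\to\inf_q j(q)=:m$. Since $J_1(q)\geq 0$, the regularization term gives $\alpha\|q_n\|_{\mathcal{M}(\bar I_c;L^2(\omega))}=J_2(q_n)\leq j(q_n)\leq m+1$ for $n$ large, whence $\{q_n\}$ is bounded in $\mathcal{M}(\bar I_c;L^2(\omega))$; here I use $\alpha>0$ crucially. Because $\mathcal{M}(\bar I_c;L^2(\omega))$ is the dual of the separable Banach space $C(\bar I_c;L^2(\omega))$ (as recorded in the Preliminaries), the Banach--Alaoglu theorem together with separability yields a subsequence, still denoted $\{q_n\}$, and a limit $\bar q\in\mathcal{M}(\bar I_c;L^2(\omega))$ with $q_n\stackrel{*}{\rightharpoonup}\bar q$. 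Next I invoke the two semicontinuity facts from the pre-theorem paragraph: $J_1$ is weak-* continuous (this is exactly the content of Proposition \ref{Thm:continuity}, which gives $S_1q_n\to S_1\bar q$ strongly in $L^2(I;L^2(\Omega))$ and $S_2q_n\to S_2\bar q$ strongly in $H^1(\Omega)$, hence in $L^2(\Omega)$, so the quadratic tracking term passes to the limit), while $J_2=\alpha\|\cdot\|_{\mathcal{M}(\bar I_c;L^2(\omega))}$ is weak-* lower semicontinuous because a dual norm is always weak-* lower semicontinuous. Combining these, $j(\bar q)\leq\liminf_{n}j(q_n)=m$, so $\bar q$ is a minimizer; setting $\bar u:=S_1\bar q=u_{\bar q}$ gives the optimal state.

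\emph{Uniqueness.} For uniqueness I would use strict convexity of $j$, which was already argued: $J_1$ is strictly convex because $S$ is affine linear and \emph{injective} (the injectivity was noted to follow from $I_c\subseteq I$, so the seminorm $q\mapsto(\|S_1q-u_d\|^2+\beta\|S_2q-u_T\|^2)^{1/2}$ composed through the injective affine map becomes a strictly convex functional), and $J_2$ is convex, so $j$ is strictly convex. If $\bar q_1\neq\bar q_2$ were two minimizers, then evaluating $j$ at the midpoint $\tfrac12(\bar q_1+\bar q_2)$ gives $j\bigl(\tfrac12(\bar q_1+\bar q_2)\bigr)<\tfrac12 j(\bar q_1)+\tfrac12 j(\bar q_2)=m$, contradicting minimality. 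Hence the minimizer $\bar q$ is unique, and correspondingly $\bar u$ is unique.

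\emph{Main obstacle.} Most of the work is genuinely delegated to the earlier results, so the only point requiring care is confirming that the abstract compactness machinery applies, namely that $C(\bar I_c;L^2(\omega))$ is separable so that bounded sets in its dual are sequentially weak-* compact (not merely weak-* compact). This holds since $L^2(\omega)$ is separable and $\bar I_c$ is a compact metric space, but I would state it explicitly. A secondary subtlety is that strict convexity of $J_1$ as a functional on the full space $\mathcal{M}(\bar I_c;L^2(\omega))$ relies on the \emph{injectivity} of $S$ rather than on any strong convexity of the observation norm; I would be careful to phrase the midpoint inequality so that it uses injectivity to exclude the degenerate case where two distinct controls produce the same observation pair $(S_1q,S_2q)$.
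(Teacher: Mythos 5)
Your proposal is correct and follows essentially the same route as the paper's proof: a minimizing sequence bounded via the $\alpha$-weighted regularization term, sequential weak-* compactness from Banach--Alaoglu applied to the dual of the separable predual $C(\bar I_c;L^2(\omega))$, weak-* lower semicontinuity of $j$ (combining Proposition \ref{Thm:continuity} for $J_1$ with lower semicontinuity of the dual norm for $J_2$) to pass to the limit, and strict convexity from the injectivity of $S$ for uniqueness. The only cosmetic difference is that you spell out the midpoint argument and the separability caveat explicitly, which the paper leaves implicit.
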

	\begin{proof}
		According to Theorem \ref{Thm:existence_state}, the objective functional $j$ is well defined on $\mathcal{M}(\bar I_c;L^2(\omega))$. For the existence of solutions, we follow the standard arguments. Since $j\ge 0$ is bounded from below on $\mathcal{M}(\bar I_c;L^2(\omega))$, we can find a minimizing sequence $\{q_n\}$ with $$\lim_{n\to \infty}j(q_n)=\mathop{\rm inf}_{q\in \mathcal{M}(\bar I_c;L^2(\omega))}j(q)=j^*\quad {\rm and}\quad \|q_n\|_{\mathcal{M}(\bar I_c;L^2(\omega))}\le\frac{1}{\alpha}j(q_n)\le C.$$
		On the other hand, the predual space ${C}(\bar I_c;L^2(\omega))$ is separable, then the bounded set in  $\mathcal{M}(\bar I_c;L^2(\omega))$ is weakly-$^*$ compact by the Banach-Alaoglu theorem. Hence, we can extract a weakly-$^*$ convergent subsequence, still denoted by $\{q_n\}$, such that $q_n\stackrel{*}{\rightharpoonup} \bar{q}$ in $\mathcal{M}(\bar I_c;L^2(\omega))$ for some $\bar{q}\in \mathcal{M}(\bar I_c;L^2(\omega))$. Let $u_n$ and  $\bar u$ be the state corresponding to $q_n$ and $\bar q$, respectively. Then $u_n\to \bar{u}$ in $X$  since the operator $S$ is weakly-$^*$ continuous. It is easy to check that $(\bar{u},\bar{q})$ is an optimal pair. In fact, $j$ is weakly-$^*$ lower semicontinuous, then 
		$$j(\bar{q})\le \mathop{\lim\inf}_{n\to \infty}\ j(q_n)=j^*,$$
		which means that $\bar{q}$ is optimal, i.e., $(\bar{u},\bar{q})$ is an optimal pair.
		
		Furthermore, the control-to-observation mapping $S$ is injective, thus the objective functional $j$ is strictly convex. Therefore, the optimal pair $(\bar{u},\bar{q})$ is unique.
		%To prove the existence of a solution we follow the standard arguments. It suffices to verify that bounded sequences in $\mathcal{M}(\bar I_c;L^2(\omega))$ contains a weakly-* convergent subsequence, and that the objective functional is weakly-* lower semicontinuous. 
		%For the first statement, since the predual $\mathcal{C}(\bar I_c;L^2(\omega))$ is separable, the bounded set in  $\mathcal{M}(\bar I_c;L^2(\omega))$ is weakly-* compact by Banach-Alaoglu theorem. Then there exists a subsequence converging weakly-* to some $q\in \mathcal{M}(\bar I_c;L^2(\omega))$. Since the norm $q\mapsto \|q\|_{\mathcal{M}(\bar I_c;L^2(\omega))}$ is weak-* lower semicontinuous, this along with Theorem \ref{Thm:continuity} implies that $J(u,q)$ is also weak-* lower semicontinuous. This finishes the proof.
	\end{proof}
	\begin{remark}
		As pointed out in \cite{KunischPieperVexler-2014}, if the state observation is of the form $\chi_{I_o\times\Omega_o}(u-u_d)\in L^2(I_o;L^2(\Omega_o))$ with ${\rm dist}(I_o,I_c)>0$ and $\beta=0$ where $I_o\subset I$ is an observation time window. The objective functional is no longer strictly convex since the control-to-observation operator is not injective, and thus the optimal pair of the optimization problem (\ref{min-J})-(\ref{PDE_state}) is not unique. However, in the current paper we do not consider this case and focus only on the situation of $I_o=I$, i.e., $I_c\subset I_o$.
	\end{remark}

	%For any given  $\tilde{q}\in M(\bar{I}_c;L^2(\Omega)),\,{u}_T\in L^2(\Omega)$,  the solution to state equation E.q\eqref{PDE_state} satisfies $\tilde{u}\in  L^2(I;H^1_0(\Omega))\cap L^\infty(I;L^2(\Omega))$ and $\tilde{u}(T)\in H^1_0(\Omega)$ according to THEOREM \ref{Thm:existence_state}, and then we introduce the following adjoint equation:
	%\begin{align}\label{PDE_adjoint}
	%	\left\{
	%	\begin{aligned}
	%		-\partial_t \varphi - \Delta \varphi & =\tilde{u}-u_d
	%		&&\mbox{in}\,\,\,\Omega\times(0,T),\\
	%		\varphi&=0 && \mbox{on}\,\, \Gamma\times (0,T),\\
	%		\varphi(T)&=\beta(\tilde{u}(T)-u_T) && \mbox{in}\,\,\Omega ,
	%	\end{aligned}
	%	\right.
	%\end{align}
	%which admits a unique solution $\varphi\in L^2(I;H_0^1(\Omega))\cap H^1(I;L^2(\Omega))\hookrightarrow C(\bar I;L^2(\Omega))$. If, in addition,  we further require that $u_T\in H_0^1(\Omega)$, then  $\varphi\in L^2(I;H^2(\Omega)\cap H_0^1(\Omega))\cap H^1(I;L^2(\Omega))\hookrightarrow C(\bar I;H_0^1(\Omega))$. 
	
	\subsection{First order optimality system}
	Below, we are in the position to derive the first order optimality condition.
	\begin{theorem}\label{Theorem_optimal_system}
		A control $\bar{q}\in \mathcal M(\bar{I}_c;L^2(\omega))$ and an associated state $\bar{u}\in  L^2(I;H^1_0(\Omega))\cap L^\infty(I;L^2(\Omega))$ are an optimal pair of the optimal control problem (\ref{min-J})-(\ref{PDE_state}), if and only if there exists an adjoint state $\bar{\varphi}\in L^2(I;H_0^1(\Omega))\cap H^1(I;L^2(\Omega))\hookrightarrow C(\bar I;L^2(\Omega))$ satisfying
		\begin{align}\label{adjoint_equation}
			\left\{
			\begin{aligned}
				-\partial_t \bar{\varphi} - \Delta \bar{\varphi} & =\bar{u}-u_d
				&&\mbox{in}\,\,\,\Omega\times(0,T),\\
				\bar{\varphi}&=0 && \mbox{on}\,\, \Gamma\times (0,T),\\
				\bar{\varphi}(T)&=\beta(\bar{u}(T)-u_T) && \mbox{in}\,\,\Omega ,
			\end{aligned}
			\right.
		\end{align}
		where $u_T\in L^2(\Omega)$, $u_d\in L^2(I;L^2(\Omega))$, such that the following subgradient condition holds:
		\begin{equation}\label{optimal_condition_1}
			0\in\bar{\varphi}|_{\bar I_c\times \omega}+\alpha\mathscr{\partial} \|\cdot\|_{\mathcal {M}(\bar{I}_c;L^2(\omega))}(\bar{q})\quad\mbox{\rm in}\ (\mathcal M(\bar{I}_c;L^2(\omega)))^*
		\end{equation}
		i.e.,
		\begin{equation}\label{optimal_condition_2}
			-\langle p-\bar{q},\bar{\varphi}\rangle_{\bar I_c\times \omega}+\alpha \|\bar{q}\|_{\mathcal M(\bar{I}_c;L^2(\omega))}\le \alpha \|p\|_{\mathcal M(\bar{I}_c;L^2(\omega))}\quad\forall p\in \mathcal M(\bar{I}_c;L^2(\omega)),
		\end{equation}
		where $\mathscr{\partial} \|\cdot\|_{\mathcal M(\bar{I}_c;L^2(\omega))}(\bar{q})$ denotes the set of subgradients of  $\|\cdot\|_{\mathcal M(\bar{I}_c;L^2(\omega))}$ at $\bar{q}$, which is nonempty since  $\|\cdot\|_{\mathcal M(\bar{I}_c;L^2(\omega))}$ is a convex functional on $\mathcal M(\bar{I}_c;L^2(\omega))$.
		
		Furthermore, from the condition \eqref{optimal_condition_2} we can easily conclude the following relation between the optimal control $\bar{q}$ and the adjoint state $\bar{\varphi}$:
		\begin{align}
			\alpha \|\bar{q}\|_{\mathcal{M}(\bar I_c;L^2(\omega))}+\langle \bar{q},\bar{\varphi}\rangle_{\bar I_c\times \omega} &=0 ,\label{OCP_OPT}\\
			\|\bar{\varphi}\|_{{C}(\bar I_c;L^2(\omega))}\left\{
			\begin{aligned}\label{adjoint_prop}
				&=\alpha && \mbox{if}\,\, \bar{q}\neq 0,\\
				&\leq \alpha && \mbox{if}\,\, \bar{q}=0.
			\end{aligned}
			\right.
		\end{align}
	\end{theorem}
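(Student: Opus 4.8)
The plan is to exploit the convexity of the reduced functional $j=J_1+J_2$ established in the proof of Theorem~\ref{Thm:existence_OCP}. Because $j$ is convex, $\bar{q}$ minimizes $j$ over $\mathcal M(\bar I_c;L^2(\omega))$ if and only if $0\in\partial j(\bar{q})$, and this single condition is simultaneously necessary and sufficient, which already delivers the ``if and only if'' in the statement. Since $J_1$ is the composition of the affine, bounded operator $S$ with a smooth quadratic form, it is G\^ateaux differentiable, whereas $J_2=\alpha\|\cdot\|_{\mathcal M(\bar I_c;L^2(\omega))}$ is convex but nonsmooth (so its Clarke subgradient coincides with the convex subdifferential). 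The sum rule for subdifferentials then gives $0\in J_1'(\bar{q})+\alpha\,\partial\|\cdot\|_{\mathcal M(\bar I_c;L^2(\omega))}(\bar{q})$. The crux of the argument is therefore to identify the derivative $J_1'(\bar{q})$, viewed as an element of $(\mathcal M(\bar I_c;L^2(\omega)))^*$, with the restriction $\bar{\varphi}|_{\bar I_c\times\omega}$ of the adjoint state.

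To carry out this identification I would first compute, for an arbitrary direction $\delta q\in\mathcal M(\bar I_c;L^2(\omega))$,
\[
J_1'(\bar{q})\,\delta q=(\bar{u}-u_d,w)_{L^2(I;L^2(\Omega))}+\beta(\bar{u}(T)-u_T,w(T))_{L^2(\Omega)},
\]
where $w:=S_1'\delta q$ solves the state equation~\eqref{PDE_state} with $f=0$, $u_0=0$ and control $\delta q$. I would then invoke the equivalent weak formulation~\eqref{identity_2} applied to $w$, choosing the data $g:=\bar{u}-u_d\in L^2(I;L^2(\Omega))$ and $z_T:=\beta(\bar{u}(T)-u_T)\in L^2(\Omega)$; by construction the corresponding backward solution $z$ of~\eqref{backward_PDE} is exactly the adjoint state $\bar{\varphi}$ defined by~\eqref{adjoint_equation}. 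This turns the right-hand side above into $\langle\delta q,\bar{\varphi}\rangle_{\bar I_c\times\omega}$, so that $J_1'(\bar{q})=\bar{\varphi}|_{\bar I_c\times\omega}$. Substituting into the sum-rule inclusion gives~\eqref{optimal_condition_1}, and writing out the defining inequality of the convex subgradient of $\|\cdot\|_{\mathcal M(\bar I_c;L^2(\omega))}$ at $\bar{q}$ produces~\eqref{optimal_condition_2}.

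For the consequences, I would extract information from~\eqref{optimal_condition_2} by specific test measures. Taking $p=0$ gives $\langle\bar{q},\bar{\varphi}\rangle_{\bar I_c\times\omega}+\alpha\|\bar{q}\|_{\mathcal M(\bar I_c;L^2(\omega))}\le 0$, while $p=2\bar{q}$ gives the reverse inequality; together they yield the complementarity relation~\eqref{OCP_OPT}. Using~\eqref{OCP_OPT} to cancel the terms involving $\bar{q}$ in~\eqref{optimal_condition_2} reduces it to $-\langle p,\bar{\varphi}\rangle_{\bar I_c\times\omega}\le\alpha\|p\|_{\mathcal M(\bar I_c;L^2(\omega))}$ for all $p$; replacing $p$ by $-p$ shows $|\langle p,\bar{\varphi}\rangle_{\bar I_c\times\omega}|\le\alpha\|p\|_{\mathcal M(\bar I_c;L^2(\omega))}$, and taking the supremum over $\|p\|_{\mathcal M(\bar I_c;L^2(\omega))}\le1$ --- i.e.\ identifying the dual norm on $\mathcal M(\bar I_c;L^2(\omega))=(C(\bar I_c;L^2(\omega)))^*$ with the predual norm via the canonical isometric embedding --- gives $\|\bar{\varphi}\|_{C(\bar I_c;L^2(\omega))}\le\alpha$. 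When $\bar{q}\neq0$, combining $\alpha\|\bar{q}\|_{\mathcal M(\bar I_c;L^2(\omega))}=-\langle\bar{q},\bar{\varphi}\rangle_{\bar I_c\times\omega}\le\|\bar{q}\|_{\mathcal M(\bar I_c;L^2(\omega))}\|\bar{\varphi}\|_{C(\bar I_c;L^2(\omega))}$ with $\|\bar{q}\|_{\mathcal M(\bar I_c;L^2(\omega))}>0$ forces the reverse bound $\alpha\le\|\bar{\varphi}\|_{C(\bar I_c;L^2(\omega))}$, hence equality; this establishes~\eqref{adjoint_prop}.

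The main obstacle I anticipate is functional-analytic bookkeeping rather than any new estimate: I must ensure that $\bar{\varphi}$ has enough regularity for the restriction $\bar{\varphi}|_{\bar I_c\times\omega}$ to be a well-defined element of the predual $C(\bar I_c;L^2(\omega))$, and that the duality pairing with the vector measure $\delta q$ in the adjoint computation is legitimate. This is secured by Lemma~\ref{stateEexist1} through the embedding $\bar{\varphi}\in L^2(I;H^1_0(\Omega))\cap H^1(I;L^2(\Omega))\hookrightarrow C(\bar I;L^2(\Omega))$, whose trace on $\bar I_c$ (compactly contained in $I$, hence away from the terminal time where the backward equation loses regularity) indeed lies in $C(\bar I_c;L^2(\omega))$. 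The second delicate point is the final dual-norm identification, where one uses that the isometric inclusion $C(\bar I_c;L^2(\omega))\hookrightarrow (C(\bar I_c;L^2(\omega)))^{**}$ preserves norms, so that the supremum over unit-ball measures reproduces exactly $\|\bar{\varphi}\|_{C(\bar I_c;L^2(\omega))}$; this is what converts the abstract subgradient bound into the concrete pointwise-in-time statement~\eqref{adjoint_prop}.
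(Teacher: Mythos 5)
Your proposal is correct and follows essentially the same route as the paper's proof: split $j=J_1+J_2$, invoke the convex subdifferential sum rule for the ``if and only if,'' identify $J_1'(\bar q)=\bar{\varphi}|_{\bar I_c\times\omega}$ via the transposition identity \eqref{identity_2} with data $g=\bar u-u_d$, $z_T=\beta(\bar u(T)-u_T)$, and then test \eqref{optimal_condition_2} with $p=0$ and $p=2\bar q$. The only (immaterial) difference is in deriving \eqref{adjoint_prop}: you cancel via \eqref{OCP_OPT} and symmetrize $p\mapsto -p$, whereas the paper substitutes $p=\bar q-r$ and uses the triangle inequality for the measure norm; both yield $\sup_{\|r\|_{\mathcal M(\bar I_c;L^2(\omega))}\le 1}\langle r,\bar\varphi\rangle\le\alpha$ and the same conclusion.
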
 
	\begin{proof}
		We split the reduced cost functional $j$ into the sum of two parts in \eqref{reduced_j}, where $J_1$ is differentiable and $J_2$ is subdifferentiable. We use $J_1^\prime(\bar{q})$ and $\mathscr{\partial}J_2(\bar{q})$ to denote the Fr\'echet derivative of $J_1$ at $\bar{q}$ and subgradients of $J_2$ at $\bar{q}$, respectively. By the calculus rules of subdifferentials for convex functions, there holds (see, e.g.,~\cite[Section 5.3]{IER1999}):
		\begin{equation}\label{condition_2}
			j(\bar{q})=\!\!\!\mathop{\rm min}_{p\in \mathcal M(\bar{I}_c;L^2(\omega))}j(p)\quad \mbox{\rm if and only if} \quad 0\in J_1^\prime(\bar{q})+\mathscr{\partial}J_2(\bar{q}),
		\end{equation} 
		where for any $p\in\mathcal M(\bar{I}_c;L^2(\omega))$, $J_1^\prime(\bar{q})$ and $\mathscr{\partial}J_2(\bar{q})$ satisfy
		\begin{equation}\label{derivative}
			\begin{split}
				J_1^\prime(\bar{q})(p-\bar{q})=&(\bar{u}-u_d,u_{p}-\bar u)_{L^2(I;L^2(\Omega))}+\beta(\bar{u}(T)-u_T,u_{p}(T)-\bar u(T)),\\ 
				\forall\xi\in\mathscr{\partial}J_2(\bar{q}),\quad& \langle\xi,p-\bar{q}\rangle_{(\mathcal M(\bar{I}_c;L^2(\omega)))^*,\mathcal M(\bar{I}_c;L^2(\omega))}\le J_2({p})-J_2(\bar{q}),			
			\end{split}
		\end{equation}
		respectively, where $(\mathcal M(\bar{I}_c;L^2(\omega)))^*$ denotes the topological dual of the space $\mathcal{M}(\bar{I}_c;L^2(\omega))$ and $\langle\cdot,\cdot\rangle_{(\mathcal M(\bar{I}_c;L^2(\omega)))^*,\mathcal{M}(\bar{I}_c;L^2(\omega))}$ denotes the duality pairing between $(\mathcal M(\bar{I}_c;L^2(\omega)))^*$ and $M(\bar{I}_c;L^2(\omega))$, $u_p$ is the solution of problem (\ref{PDE_state}) with $q$ replaced by $p$. In order to give an explicit  representation of $J_1^\prime(\bar{q})(p-\bar{q})$ with respect to $p-\bar{q}$, let $\bar{\varphi}$ be the solution of equation \eqref{backward_PDE} with $g=\bar{u}-u_d,\ z_T=\beta(\bar{u}(T)-u_T)$,  and then apply the identity \eqref{identity_2} to the difference $u_p-\bar{u}$ to deduce
		\begin{align}
			(\bar{u}-u_d,u_{p}-\bar{u})_{L^2(I;L^2(\Omega))}+\beta(\bar{u}(T)-u_T,u_{p}(T)-\bar{u}(T))=\langle p-\bar{q},\bar{\varphi}\rangle_{\bar I_c\times \omega}\label{identity_4}
		\end{align}
		for any  $p\in \mathcal M(\bar{I}_c;L^2(\omega))$. Furthermore, we obtain
		$$J_1^\prime(\bar{q})(p)=\langle p,\bar{\varphi}\rangle_{\bar I_c\times \omega}\quad \forall p\in \mathcal M(\bar{I}_c;L^2(\omega)),$$
		which means that $J_1^\prime(\bar{q})=\bar{\varphi}|_{\bar I_c\times \omega}$. Therefore, combining with \eqref{condition_2} we deduce the optimality condition  \eqref{optimal_condition_1} which claims that $-
		\bar{\varphi}|_{\bar I_c\times \omega}\in\alpha\mathscr{\partial}\|\cdot\|_{\mathcal{M}(\bar{I}_c;L^2(\omega))}$, i.e., \eqref{optimal_condition_2}.

		Testing \eqref{optimal_condition_2} with $p=2\bar{q}$ and $p=0$ we arrive at (\ref{OCP_OPT}). Furthermore, it follows from setting $p=\bar{q}-r$ in \eqref{optimal_condition_2} for arbitrary $r\in \mathcal M(\bar{I}_c;L^2(\omega))$ that
		\begin{eqnarray}
			\langle r,\varphi \rangle_{\bar I_c\times \omega}\leq J_2(\bar{q}-r)-J_2(\bar{q})\leq J_2(r)=\alpha\|r\|_{\mathcal{M}(\bar I_c;L^2(\omega))}\quad \forall r\in \mathcal{M}(\bar I_c;L^2(\omega)).\nonumber
		\end{eqnarray}
		Hence, we obtain
		\begin{eqnarray}
			{\|\varphi\|_{{C}(\bar I_c;L^2(\omega))}=\sup\limits_{\|r\|_{\mathcal{M}(\bar I_c;L^2(\omega))}\leq 1}\langle r,\varphi \rangle_{\bar I_c\times \omega}\leq \alpha},
		\end{eqnarray}
		this verifies (\ref{adjoint_prop}) in view of (\ref{OCP_OPT}).
	\end{proof}
	
	In the following we will derive the sparsity structure in time of $\bar q$.
	\begin{theorem}\label{Thm:Jordan}
		Let $\bar{q}$ be the optimal control of the optimization problem (\ref{min-J})-(\ref{PDE_state}) and $\bar{\varphi}$ be the optimal adjoint state defined by equation (\ref{adjoint_equation}), then there holds
		\begin{eqnarray}
			{\rm supp}|\bar{q}|\subset \{t\in \bar I_c: \|\bar{\varphi}(t)\|_{L^2(\omega)}=\alpha\},\label{Jordan_1}\\
			\bar{q}'(t,x)=-{1\over\alpha}\bar{\varphi}(t,x)\quad\mbox{in}\ L^1(I_c,|\bar{q}|;L^2(\omega)),\label{Jordan_2}
		\end{eqnarray}
		where $d\bar{q}=\bar{q}'d|\bar{q}|$ denotes the polar decomposition of $\bar{q}$.
	\end{theorem}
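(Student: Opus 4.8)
The plan is to combine the polar decomposition $d\bar q = \bar q'\,d|\bar q|$ with the two identities \eqref{OCP_OPT} and \eqref{adjoint_prop} already established in Theorem \ref{Theorem_optimal_system}. First I would rewrite the duality pairing and the norm appearing in \eqref{OCP_OPT} through the decomposition: using $\langle \bar q,\bar\varphi\rangle_{\bar I_c\times\omega}=\int_{I_c}(\bar q'(t),\bar\varphi(t))_{L^2(\omega)}\,d|\bar q|(t)$ together with $\|\bar q\|_{\mathcal M(\bar I_c;L^2(\omega))}=|\bar q|(\bar I_c)=\int_{I_c}d|\bar q|(t)$, the relation \eqref{OCP_OPT} becomes $\int_{I_c}\big[\alpha+(\bar q'(t),\bar\varphi(t))_{L^2(\omega)}\big]\,d|\bar q|(t)=0$.

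The crucial observation is that this integrand is nonnegative $|\bar q|$-almost everywhere. Indeed, by the Cauchy--Schwarz inequality in $L^2(\omega)$, the normalization $\|\bar q'(t)\|_{L^2(\omega)}=1$ from \eqref{Polar}, and the bound $\|\bar\varphi\|_{C(\bar I_c;L^2(\omega))}\le\alpha$ from \eqref{adjoint_prop}, one has $(\bar q'(t),\bar\varphi(t))_{L^2(\omega)}\ge-\|\bar q'(t)\|_{L^2(\omega)}\|\bar\varphi(t)\|_{L^2(\omega)}\ge-\alpha$ for $|\bar q|$-almost every $t$. A nonnegative integrand with vanishing integral must vanish $|\bar q|$-a.e., so $(\bar q'(t),\bar\varphi(t))_{L^2(\omega)}=-\alpha$ for $|\bar q|$-a.e. $t$. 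This is precisely the equality case of Cauchy--Schwarz: since $\|\bar q'(t)\|_{L^2(\omega)}=1$, attaining the value $-\alpha$ forces both $\|\bar\varphi(t)\|_{L^2(\omega)}=\alpha$ and the anti-alignment $\bar\varphi(t)=-\alpha\,\bar q'(t)$, equivalently $\bar q'(t)=-\tfrac1\alpha\bar\varphi(t)$ for $|\bar q|$-a.e. $t$. Because $\bar q'\in L^\infty(I_c,|\bar q|;L^2(\omega))\hookrightarrow L^1(I_c,|\bar q|;L^2(\omega))$ and $|\bar q|$ is finite, this identity holds as an equality of elements of $L^1(I_c,|\bar q|;L^2(\omega))$, which is exactly \eqref{Jordan_2}.

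It then remains to upgrade the almost-everywhere statement $\|\bar\varphi(t)\|_{L^2(\omega)}=\alpha$ into the support inclusion \eqref{Jordan_1}. Here I would invoke the regularity $\bar\varphi\in C(\bar I;L^2(\Omega))$ from Theorem \ref{Theorem_optimal_system}; restricting to $\omega\subset\Omega$ makes the scalar map $t\mapsto\|\bar\varphi(t)\|_{L^2(\omega)}$ continuous on $\bar I_c$, so the level set $A:=\{t\in\bar I_c:\|\bar\varphi(t)\|_{L^2(\omega)}=\alpha\}$ is closed and its complement $\bar I_c\setminus A$ is relatively open. The previous step yields $|\bar q|(\bar I_c\setminus A)=0$, and since $\mathrm{supp}\,|\bar q|$ is by definition the complement of the largest relatively open $|\bar q|$-null set, the open null set $\bar I_c\setminus A$ is disjoint from $\mathrm{supp}\,|\bar q|$. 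This gives $\mathrm{supp}\,|\bar q|\subset A$, which is \eqref{Jordan_1}.

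I expect the only genuinely delicate point to be the pointwise (rather than merely almost-everywhere) validity of $\|\bar\varphi(t)\|_{L^2(\omega)}\le\alpha$ used in the Cauchy--Schwarz sign argument, and the continuity of $t\mapsto\|\bar\varphi(t)\|_{L^2(\omega)}$ used for the support claim. Both hinge on $\bar\varphi|_{\bar I_c\times\omega}\in C(\bar I_c;L^2(\omega))$ combined with the uniform bound in \eqref{adjoint_prop}, so no PDE estimate beyond what Theorem \ref{Thm:existence_state} and Theorem \ref{Theorem_optimal_system} already furnish is required.
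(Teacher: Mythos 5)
Your proposal is correct and follows essentially the same route as the paper's proof: rewrite \eqref{OCP_OPT} via the polar decomposition, use \eqref{Polar} and \eqref{adjoint_prop} with Cauchy--Schwarz to show the integrand is nonnegative and hence vanishes $|\bar q|$-a.e., and read off \eqref{Jordan_2} from the equality case. Your treatment of \eqref{Jordan_1} is in fact slightly more careful than the paper's, which passes directly from the full-measure identity to the support inclusion, whereas you make explicit the needed closedness of the level set via the continuity of $t\mapsto\|\bar\varphi(t)\|_{L^2(\omega)}$.
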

	\begin{proof}
		The idea of proof  follows from \cite[Theorem 2.12]{KunischPieperVexler-2014}, see also \cite[Theorem 3.3]{CasasClasonKunisch-2013}. Here we sketch it for completeness. Applying the polar decomposition of $q$ in (\ref{OCP_OPT}) we have	
		\begin{eqnarray}\label{integration_1}
			\int_{\bar I_c}(\alpha +(\bar{q}'(t),\bar{\varphi})_{L^2(\omega)})d|\bar{q}|(t)=0.\label{support_proof_2}
		\end{eqnarray}
		On the other hand, it follows from (\ref{Polar}) and (\ref{adjoint_prop}) that
		\begin{eqnarray}
			(\bar{q}'(t),\bar{\varphi})_{L^2(\omega)}\geq -\|\bar{q}'(t)\|_{L^2(\omega)}\|\bar{\varphi}\|_{L^2(\omega)}\geq -\alpha\quad \ |\bar{q}|-\mbox{a.e.}\ t\in \bar I_c,\label{support_proof_1}
		\end{eqnarray}
		i.e., the integrand in \eqref{integration_1} is nonnegative.  Thus, it must be zero $\ |\bar{q}|$- almost everywhere, that is, 
		\begin{eqnarray}
			-(\bar{q}'(t),\bar{\varphi})_{L^2(\omega)}=\alpha\quad \mbox{for}\ |\bar{q}|-\mbox{almost\  all}\ t\in \bar I_c.
		\end{eqnarray}
		Therefore, in view of \eqref{support_proof_1} we have the identity: 
		\begin{eqnarray}
			(\bar{q}'(t),\bar{\varphi})_{L^2(\omega)}= -\|\bar{q}'(t)\|_{L^2(\omega)}\|\bar{\varphi}\|_{L^2(\omega)}= -\alpha,\quad \mbox{for}\ |\bar{q}|-\mbox{almost\  all}\ t\in \bar I_c,
		\end{eqnarray}
		which is equivalent to
		\begin{eqnarray}
			\|\bar{\varphi}(t)\|_{L^2(\omega)}=\alpha\quad\mbox{and}\quad \bar{\varphi}(t,x)=-\alpha \bar{q}'(t,x)
		\end{eqnarray}
		for $|\bar{q}|$-almost all $t\in \bar I_c$ and a.e. $x\in \omega$. Thus, we finish the proof of (\ref{Jordan_2}).  
		
		In view of (\ref{Polar}), we have 
		$$\|\bar{\varphi}(t)\|_{L^2(\omega)}=\alpha\|\bar{q}^\prime(t)\|_{L^2(\omega)} = \alpha \quad \mbox{for}\ |\bar{q}|\ \mbox{-almost all}\ t\in \bar I_c.$$
		Namely,
		$$|\bar{q}|(\bar{I}_c)=|\bar{q}|(\mbox{supp}\ |\bar{q}|)=|\bar{q}|(\{t\in\bar{I}_c:\|\bar{\varphi}(t)\|_{L^2(\omega)}=\alpha\}),$$
		which means that $\mbox{supp}\ |\bar{q}|\subseteq \{t\in\bar{I}_c:\|\bar{\varphi}(t)\|_{L^2(\omega)}=\alpha\}$. This finishes the proof.
	\end{proof}
	
	In view of \eqref{Jordan_1} in Theorem \ref{Thm:Jordan}, we find that the optimal control $\bar q\in \mathcal{M}(\bar I_c;L^2(\omega))$ has sparsity pattern in time that is independent of the spatial domain. Moreover, if  $\|\bar\varphi(t)\|_{L^2(\omega)}=\alpha$ holds for a finite set of time instances, namely, $\{t\in \bar I_c: \|\varphi(t)\|_{L^2(\omega)}=\alpha\}=\{\tau_i\}_{i=1}^N$, then $\bar q$ has the representation $\bar q(t,x)=\sum\limits_{i=1}^N\chi_\omega \bar q_i(x)\delta_{\tau_i}$ such that $\bar q_i\in L^2(\omega)$ (cf. \cite{KunischPieperVexler-2014}). This is exactly the impulse control problem, studied extensively in the literature, e.g., \cite{DuanWang-2020,DuanWangZhang-2019,Phung-Wang-Xu-2017,Qin-Wang-2017,Trelat-Wang-Zhang-2016}, and the references therein. 

	\begin{proposition}
		There exists $\alpha_0>0$ such that the optimal control $\bar{q}=0$ when $\alpha>\alpha_0$.
	\end{proposition}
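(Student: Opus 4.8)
The plan is to exploit the dichotomy \eqref{adjoint_prop} for the adjoint state together with an a priori bound on $\bar\varphi$ that is \emph{uniform in} $\alpha$. The key observation is that the threshold $\alpha_0$ can be assembled entirely out of the problem data, and the mechanism that produces it is the comparison of the optimal value $j(\bar q)$ with the value $j(0)$ at the trivial control.

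First I would test optimality against the admissible control $q=0$. Since $\bar q$ minimizes the reduced functional $j$, we have $j(\bar q)\le j(0)$. Writing $u^0$ for the state associated with $q=0$ (which depends only on $f$ and $u_0$, not on $\alpha$), this reads
$$
\tfrac12\|\bar u-u_d\|_{L^2(I;L^2(\Omega))}^2+\tfrac\beta2\|\bar u(T)-u_T\|_{L^2(\Omega)}^2+\alpha\|\bar q\|_{\mathcal M(\bar I_c;L^2(\omega))}\le j(0)=:C_0 .
$$
Each term on the left is nonnegative, so this immediately yields the uniform bounds $\|\bar u-u_d\|_{L^2(I;L^2(\Omega))}\le\sqrt{2C_0}$ and $\beta\|\bar u(T)-u_T\|_{L^2(\Omega)}\le\sqrt{2\beta C_0}$, both independent of $\alpha$ (the second being vacuous when $\beta=0$).

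Next I would feed these bounds into the adjoint equation \eqref{adjoint_equation}, whose source is $\bar u-u_d$ and whose terminal datum is $\beta(\bar u(T)-u_T)$. Applying the stability estimate \eqref{estimate_1} of Lemma \ref{stateEexist1}, together with the embedding $L^2(\Omega)\hookrightarrow H^{-1}(\Omega)$ to control the source, gives
$$
\|\bar\varphi\|_{C(\bar I;L^2(\Omega))}\le C\big(\|\bar u-u_d\|_{L^2(I;L^2(\Omega))}+\beta\|\bar u(T)-u_T\|_{L^2(\Omega)}\big)\le C\big(\sqrt{2C_0}+\sqrt{2\beta C_0}\big).
$$
Because $\omega\subset\Omega$ and $\bar I_c\subset\bar I$, restriction only decreases the norm, so $\|\bar\varphi\|_{C(\bar I_c;L^2(\omega))}\le\|\bar\varphi\|_{C(\bar I;L^2(\Omega))}\le\alpha_0$, where I set $\alpha_0:=C\big(\sqrt{2C_0}+\sqrt{2\beta C_0}\big)$, a quantity depending only on $f,u_0,u_d,u_T,\beta$.

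Finally I would invoke the characterization \eqref{adjoint_prop} from Theorem \ref{Theorem_optimal_system}: if $\bar q\neq0$ then necessarily $\|\bar\varphi\|_{C(\bar I_c;L^2(\omega))}=\alpha$, so the bound above forces $\alpha\le\alpha_0$. Taking the contrapositive, $\alpha>\alpha_0$ implies $\bar q=0$, which is the claim. The only delicate point — and the step deserving the most care — is verifying that $\alpha_0$ is genuinely independent of $\alpha$; this rests on the fact that the comparison value $j(0)$ carries no $\alpha$, so that the tracking terms, and hence the adjoint data and $\|\bar\varphi\|_{C(\bar I_c;L^2(\omega))}$, are bounded uniformly in the regularization parameter. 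Beyond this bookkeeping I do not expect any genuine analytic obstacle.
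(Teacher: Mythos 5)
Your proof is correct and follows essentially the same route as the paper: compare $j(\bar q)\le j(0)$ to get $\alpha$-uniform bounds on the tracking terms, feed them through the adjoint stability estimate of Lemma \ref{stateEexist1}, and conclude from the norm characterization of the optimal adjoint state. The only (immaterial) difference is that you invoke the dichotomy \eqref{adjoint_prop} from Theorem \ref{Theorem_optimal_system} while the paper cites the support condition \eqref{Jordan_1} of Theorem \ref{Thm:Jordan}; both yield the same conclusion, and your constant bookkeeping is in fact slightly more careful than the paper's.
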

	\begin{proof}
		The main idea follows from \cite[Corollary 3.5]{CasasClasonKunisch-2013}, see also \cite[Proposition 2.2]{CasasClasonKunisch-2012}, and we sketch it here. Note that %To emphasize the dependence on $\alpha$ of the objective functional in (\ref{min-J}) we use the notation $J_\alpha$, the solutions to the optimization problem (\ref{min-J})-(\ref{PDE_state}) are denoted by $(q_\alpha,u_\alpha,\varphi_\alpha)$.  By setting $q=0$ in 	(\ref{PDE_state}) we denote the corresponding state by $\hat u_0$. Then it follows that
		\begin{align}
			\frac{1}{2}\|\bar{u}-u_d\|_{L^2(I;L^2(\Omega))}^2
			+\frac{\beta}{2}\|\bar{u}(T)-u_T\|_{L^2(\Omega)}^2\leq J(\bar{q})\leq J(0),\nonumber
		\end{align}
		where $J(0)$ is independent of $\alpha$. Then we have obtained a uniform upper bound of $\|\bar{u}-u_d\|_{L^2(I;L^2(\Omega))}$ and $\|\bar{u}(T)-u_T\|_{L^2(\Omega)}$ with respect to $\alpha$. For any $u_d\in L^2(I;L^2(\Omega)),\ u_T\in L^2(\Omega)$, let $\bar{\varphi}\in H^1(I;H^{-1}(\Omega))\cap L^2(I;H^1_0(\Omega))\hookrightarrow C(\bar I;L^2(\Omega))$ be the optimal adjoint state defined by equation \eqref{adjoint_equation} with the following estimate:
		\begin{align*}
			\|\bar{\varphi}(t)\|_{L^2(\Omega)}\leq C(\|\bar{u}-u_d\|_{L^2(I;L^2(\Omega))}+\beta\|\bar{u}(T)-u_T\|_{L^2(\Omega)})\leq 2CJ(0),\nonumber
		\end{align*}
		where we have used Lemma \ref{stateEexist1}. Setting $\alpha_0=2CJ(0)$, it follows from (\ref{Jordan_1}) in Theorem \ref{Thm:Jordan} that $\bar{q}=0$ for all $\alpha>\alpha_0$. This finishes the proof.
	\end{proof}
	
	\subsection{The regularity of solutions}  In this subsection we prepare to state the  regularity of solutions to the optimality system, which will be used in the finite element approximation to the optimal state and adjoint state.
	
	\begin{theorem}\label{Theorem3-5}
		For any $u_d,\, f\in L^2(I;L^2(\Omega)),\,u_0,\,u_T\in L^2(\Omega)$, let $(\bar{q},\bar{u},\bar{\varphi})$ be the optimal solution of the optimal control problem (\ref{min-J})-(\ref{PDE_state}), where $\bar{q},\bar{u}$ and  $\bar{\varphi}$ are the optimal control, optimal state and adjoint state, respectively. Then there hold
		\begin{align*}
			&\bar{u}\in L^2(I;H^1_0(\Omega))\cap L^\infty(I;L^2(\Omega)),\quad \bar{\varphi}\in H^1(I;H^{-1}(\Omega))\cap L^2(I;H^1_0(\Omega)),\quad \bar{q}\in \mathcal{M}(\bar{I}_c;H^1(\omega)),\\
			&\bar{u}|_{(\tilde{t},T)}\in L^2((\tilde{t},T);H^2(\Omega)\cap H^1_0(\Omega))\cap H^1((\tilde{t},T);L^2(\Omega))\hookrightarrow C([\tilde{t},T];H^1_0({\Omega})),\\
			&\bar{\varphi}\in L^2(I_c;H^2(\Omega)\cap H^1_0(\Omega))\cap H^1(I_c;L^2(\Omega))\hookrightarrow C(\bar{I}_c;H^1_0({\Omega})),
		\end{align*}
		where $(\tilde{t},T)\cap I_c=\emptyset$, and there hold the following stability estimates 
		\begin{align*}
			&\|\bar{u}\|_{L^2(I;H^{1}_0(\Omega))}+\|\bar{u}\|_{L^\infty(I;L^2(\Omega))}\\
			+&\|\bar{\varphi}\|_{H^1(I;H^{-1}(\Omega))}+\|\bar{\varphi}\|_{L^2(I;H^1_0(\Omega))}+\|\bar{\varphi}\|_{C(\bar{I};L^2(\Omega))}+\|\bar{q}\|^\frac{1}{2}_{ \mathcal{M}(\bar{I}_c;H^1(\omega))}\\
			+&\|\bar{u}\|_{H^1((\tilde{t},T);L^2(\Omega))}+\|\bar{u}\|_{L^2((\tilde{t},T);H^2(\Omega)\cap H^1_0(\Omega))}+\|\bar{u}\|_{C([\tilde{t},T];H^1_0(\Omega))}\\
			\le& C\left(\|f\|_{L^2(I;L^2(\Omega))}+\|u_d\|_{L^2(I;L^2(\Omega))}+\|u_T\|_{L^2(\Omega)}+\|u_0\|_{L^2(\Omega)}\right).
		\end{align*}
		Moreover, if $u_T\in H^1_0(\Omega)$, then the adjoint state has the following improved regularity:
		\begin{equation}
			\bar{\varphi}\in H^1(I;L^2(\Omega))\cap L^2(I;H^2(\Omega)\cap H^1_0(\Omega))\nonumber
		\end{equation}
		with the estimate
		\begin{eqnarray*}
			&&\|\bar{\varphi}\|_{H^1(I;L^2(\Omega))}+\|\bar{\varphi}\|_{L^2(I;H^2(\Omega)\cap H^1_0(\Omega))}+\|\bar{\varphi}\|_{C(\bar{I};H^1_0(\Omega))}\\
			&\le& C\left(\|f\|_{L^2(I;L^2(\Omega))}+\|u_d\|_{L^2(I;L^2(\Omega))}+\|u_T\|_{H^1(\Omega)}+\|u_0\|_{L^2(\Omega)}\right).
		\end{eqnarray*}
	\end{theorem}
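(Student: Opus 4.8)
The plan is to read off each regularity assertion from the results already established for the forward state equation (Theorem~\ref{Thm:existence_state}), the backward equation (Lemma~\ref{stateEexist1}), and the sparsity identity (Theorem~\ref{Thm:Jordan}), treating the state, the adjoint, and the control in turn. The only genuinely new work is an interior-in-time regularity argument for the adjoint and the transfer of this regularity to $\bar q$.

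First I would dispose of the state and the global adjoint regularity. The global bound for $\bar u$ on $I$ and the local-in-time bound for $\bar u$ on $(\tilde t,T)$ are exactly the content of Theorem~\ref{Thm:existence_state} applied to $\bar q$, so nothing new is needed there. For the adjoint, $\bar\varphi$ solves the backward problem \eqref{adjoint_equation} with right-hand side $g=\bar u-u_d\in L^2(I;L^2(\Omega))$ and terminal datum $z_T=\beta(\bar u(T)-u_T)\in L^2(\Omega)$; since $L^2(I;L^2(\Omega))\hookrightarrow L^2(I;H^{-1}(\Omega))$, the first stability estimate of Lemma~\ref{stateEexist1} yields directly $\bar\varphi\in H^1(I;H^{-1}(\Omega))\cap L^2(I;H^1_0(\Omega))\hookrightarrow C(\bar I;L^2(\Omega))$ together with its bound, where $\|\bar u-u_d\|_{L^2(I;L^2(\Omega))}$ and $\|\bar u(T)-u_T\|_{L^2(\Omega)}$ are controlled by the state estimate \eqref{state_stability}.

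The key technical step is the interior-in-time $H^2$-regularity of $\bar\varphi$ on $I_c$. Since $I_c=(t_1,t_2)\subset\subset I$ is bounded away from the terminal time $T$, I would mirror the cutoff argument used in the proof of Theorem~\ref{Thm:existence_state}, but now cutting off near $T$ rather than near $0$: choose a smooth temporal cutoff $\eta$ with $\eta\equiv1$ on $\bar I_c$ and $\eta\equiv0$ on a neighbourhood of $t=T$, and set $\hat\varphi:=\eta\bar\varphi$. A direct computation shows that $\hat\varphi$ solves the backward equation \eqref{backward_PDE} on $(0,T)$ with source $G:=\eta(\bar u-u_d)-\eta'\bar\varphi\in L^2(I;L^2(\Omega))$ and homogeneous terminal datum $\hat\varphi(T)=0\in H^1_0(\Omega)$, the latter because $\eta$ vanishes near $T$. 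The second (improved) stability estimate of Lemma~\ref{stateEexist1} then gives $\hat\varphi\in H^1(I;L^2(\Omega))\cap L^2(I;H^2(\Omega)\cap H^1_0(\Omega))\hookrightarrow C(\bar I;H^1_0(\Omega))$, and restricting to $\bar I_c$, where $\hat\varphi=\bar\varphi$, produces the claimed regularity and the bound on $\|\bar\varphi\|_{C(\bar I_c;H^1_0(\Omega))}$ in terms of $\|\bar u-u_d\|_{L^2(I;L^2(\Omega))}$ and $\|\bar u(T)-u_T\|_{L^2(\Omega)}$, hence in terms of the data.

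With the local regularity of $\bar\varphi$ in hand, the control regularity $\bar q\in\mathcal M(\bar I_c;H^1(\omega))$ follows from the polar decomposition and the sparsity identity \eqref{Jordan_2}: since $d\bar q=\bar q'\,d|\bar q|$ with $\bar q'(t)=-\tfrac1\alpha\bar\varphi(t)$ for $|\bar q|$-a.e.\ $t$ and $\bar\varphi\in C(\bar I_c;H^1_0(\Omega))$, the density $\bar q'$ is $H^1(\omega)$-valued with $\|\bar q'(t)\|_{H^1(\omega)}=\tfrac1\alpha\|\bar\varphi(t)\|_{H^1(\omega)}\le\tfrac1\alpha\|\bar\varphi\|_{C(\bar I_c;H^1_0(\Omega))}$ uniformly bounded, so the $H^1(\omega)$-variation $\int_{\bar I_c}\|\bar q'(t)\|_{H^1(\omega)}\,d|\bar q|(t)\le\tfrac1\alpha\|\bar\varphi\|_{C(\bar I_c;H^1(\omega))}\,|\bar q|(\bar I_c)$ is finite. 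I expect this product structure to be the delicate point of the estimate: the factor $|\bar q|(\bar I_c)=\|\bar q\|_{\mathcal M(\bar I_c;L^2(\omega))}$ is controlled through $\alpha\|\bar q\|_{\mathcal M(\bar I_c;L^2(\omega))}\le j(\bar q)\le j(0)=J(0)$, and the square root $\|\bar q\|^{1/2}_{\mathcal M(\bar I_c;H^1(\omega))}$ appearing in the stated estimate is precisely what balances this product against the linear-in-data right-hand side. Finally, under the additional hypothesis $u_T\in H^1_0(\Omega)$, the local state regularity on $(\tilde t,T)$ gives $\bar u(T)\in H^1_0(\Omega)$ via \eqref{local_estimate}, so the terminal datum $\beta(\bar u(T)-u_T)$ now lies in $H^1_0(\Omega)$; applying the second estimate of Lemma~\ref{stateEexist1} to the \emph{full} adjoint equation \eqref{adjoint_equation} on $I$ then upgrades $\bar\varphi$ to $H^1(I;L^2(\Omega))\cap L^2(I;H^2(\Omega)\cap H^1_0(\Omega))\hookrightarrow C(\bar I;H^1_0(\Omega))$, with $\|\bar u(T)\|_{H^1(\Omega)}$ absorbed using \eqref{local_estimate}.
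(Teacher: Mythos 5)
Your proposal is correct and follows essentially the same route as the paper's proof: bound $\|\bar q\|_{\mathcal M(\bar I_c;L^2(\omega))}$ by optimality against $q=0$, obtain the $H^1$-in-time/$H^2$-in-space regularity of $\bar\varphi$ on $I_c$ by a temporal cutoff vanishing near $T$ combined with the improved estimate of Lemma~\ref{stateEexist1}, and transfer this regularity to $\bar q$ through the polar decomposition and the sparsity identity \eqref{Jordan_2}. The only cosmetic differences are that the paper poses the cutoff equation on a subinterval $(\tilde t_1,\tilde t_2)\supset I_c$ rather than on all of $(0,T)$, and bounds the $H^1(\omega)$-variation by duality against $C(\bar I_c;(H^1(\omega))^*)$ rather than by the variation integral you use; your explicit treatment of the $u_T\in H^1_0(\Omega)$ case via \eqref{local_estimate} is a step the paper's proof leaves implicit.
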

	\begin{proof}
		To begin with, we first consider the regularity of  $\bar{u},\, \bar{\varphi}$ and derive the associated estimates. By using Theorem \ref{Thm:existence_state} and  Lemma \ref{stateEexist1}, $\bar{u}$ and $\bar{\varphi}$ have the above mentioned regularity, but the local regularity of $\bar{\varphi}$ and the associated stability estimates depend on $\|\bar{q}\|_ {\mathcal{M}(\bar{I}_c;L^2(\omega))}$. Therefore, we first provide the estimate for $\|\bar{q}\|_ {\mathcal{M}(\bar{I}_c;L^2(\omega))}$.
		%From the results of THEOREM \ref{Thm:existence_state} and  LEMMA \ref{stateEexist1}, we only need to estimate $\|\bar{q}\|_{M(\bar{I}_c;L^2(\omega))}$ and to proof  $\bar{\varphi}\in L^2(I_c;H^2(\Omega)\cap H^1_0(\Omega))\cap H^1(I_c;L^2(\Omega))\hookrightarrow C(\bar{I}_c;H^1_0({\Omega}))$ and $\bar{q}\in M(\bar{I}_c;H^1(\omega))$
		%For $f\in L^2(I;L^2(\Omega))$, $_0\in L^2(\Omega)$ and $\bar{q}\in M(\bar{I}_c;L^2(\Omega))$, bu using THEOREM  \ref{Thm:existence_state} we obtain that $\bar{u}$ has the regularity of above THEOREM and the associate estimate depend on the estimate . 
		% From the THEOREM \ref{Thm:existence_state} and  LEMMA \ref{stateEexist1}, we know that $\bar{u},\, \bar{\varphi}$ possess the  can be directly deduced    the direct conclusions of . Secondly, the corresponding estimates for $\bar{u},\, \bar{\varphi}$ depend on the estimate $\|\bar{q}\|_{M(\bar{I}_c;L^2(\Omega))}$ .  We only need to proof the regularity  for $\bar{q}$ and the local regularity for $\bar{\varphi}$ and the associate estimates.
		
		Let $\tilde{u}_0$ be the solution of equation \eqref{PDE_state} with $q=0$. Using the identity \eqref{OCP_OPT} there holds
		\begin{align*}
			\alpha\|\bar{q}\|_{\mathcal{M}(\bar{I}_c;L^2(\omega))}
			\le& J(\bar{u},\bar{q})
			\le J(\tilde u_0,0)\\
			\le&C(\|u_d\|^2_{L^2(I;L^2(\Omega))}+\|f\|^2_{L^2(I;L^2(\Omega))}+\|u_0\|^2_{L^2(\Omega)}+{\beta}\|u_T\|^2_{L^2(\Omega)}).
		\end{align*}	
		
		Below, we show that $\bar{\varphi}\in H^1(I_c; L^2(\Omega))\cap L^2(I_c;H^2(\Omega)\cap H^1_0(\Omega))\hookrightarrow C(\bar{I}_c;H^1_0(\Omega))$ and $\bar{q}\in \mathcal M(\bar{I}_c;H^1(\omega))$. 
		Since $I_c=(t_1,t_2)\subseteq I$ is relatively compact, there exist $\tilde{t}_1$, $\tilde{t}_2$ satisfying $\tilde{t}_1<t_1<t_2<\tilde{t}_2$, such that $I_c\subseteq (\tilde{t}_1,\tilde{t}_2)$. Let $\tilde{\omega}$ be a smooth cut-off function satisfying
		\begin{align*} 
%			\begin{aligned}
				\tilde{\omega}(t)\in [0,1]\quad \forall t\in [0,T];\quad
				\tilde{\omega}(t)=1\quad \forall t\in I_c;\quad 
				\tilde{\omega}(t)=0\quad \forall t\in \bar{I}\backslash (\tilde{t}_1,\tilde{t}_2),
%			\end{aligned}
		\end{align*}
		and  $\tilde{\varphi}:=\bar \varphi\tilde{\omega}$, then $\tilde{\varphi}$ satisfies the following equation:
		\begin{align}
			\left\{
			\begin{aligned}
				-\partial_t \tilde{\varphi} - \Delta \tilde{\varphi} & =-\partial_t\tilde{\omega}\bar{\varphi}+\tilde{\omega}(\bar{u}-u_d)
				&&\mbox{in}\,\,\,\Omega\times(\tilde{t}_1,\tilde{t}_2),\\
				\tilde{\varphi}&=0 && \mbox{on}\,\, \Gamma\times (\tilde{t}_1,\tilde{t}_2),\\
				\tilde{\varphi}(\tilde{t}_2)&=0 && \mbox{in}\,\,\Omega.
			\end{aligned}
			\right.
		\end{align}
		Therefore, by using Lemma \ref{stateEexist1} we obtain $\tilde{\varphi}\in H^1((\tilde{t}_1,\tilde{t}_2);L^2(\Omega))\cap L^2((\tilde{t}_1,\tilde{t}_2);H^2(\Omega)\cap H^1_0(\Omega))$, which implies that $\bar{\varphi}\in H^1(I_c; L^2(\Omega))\cap L^2(I_c;H^2(\Omega)\cap H^1_0(\Omega))\hookrightarrow C(\bar{I}_c;H^1_0(\Omega))$ and the following estimate holds:
		\begin{eqnarray}
			&&\|\bar\varphi\|_{H^1(I_c; L^2(\Omega))}+\|\bar\varphi\|_{L^2(I_c;H^2(\Omega)\cap H^1_0(\Omega))}+\|\bar\varphi\|_{C(\bar{I}_c;H^1_0(\Omega))}\label{estimate_adjoint_c}\\
			&\le& \|\tilde\varphi\|_{H^1((\tilde{t}_1,\tilde{t}_2); L^2(\Omega))}+\|\tilde\varphi\|_{L^2((\tilde{t}_1,\tilde{t}_2);H^2(\Omega))}+\|\tilde\varphi\|_{C([\tilde{t}_1,\tilde{t}_2];H^1_0(\Omega))}\nonumber\\
			&\le&C(\|\bar\varphi\|_{L^2(I;L^2(\Omega))}+\|u_d\|_{L^2(I;L^2(\Omega))}+\|\bar u\|_{L^2(I;L^2(\Omega))})\nonumber\\
			&\le& C(\|f\|_{L^2(I;L^2(\Omega))}+\|u_d\|_{L^2(I;L^2(\Omega))}+\|u_T\|_{L^2(\Omega)}+\|u_0\|_{L^2(\Omega)}).\nonumber
		\end{eqnarray}
		
		Finally, we prove that $\bar{q}\in \mathcal M(\bar{I}_c;H^1(\omega))$. In view of Theorem \ref{Thm:Jordan}, there holds the relation $d\bar{q}=\bar{q}^\prime d|\bar{q}|$, where $\bar{q}^\prime(t,x)=-\frac{1}{\alpha}\bar{\varphi}(t,x)$ with $\bar\varphi\in C(\bar{I}_c;H_0^1(\Omega))$. Therefore, for any $\psi\in C(\bar{I}_c;(H^1(\omega))^*)$, there holds
		\begin{align*}
			\langle\bar q,\psi\rangle_{\bar I_c\times \omega}=&\int_{I_c}\langle\psi,d\bar{q}
			\rangle
			=-\frac{1}{\alpha}\int_{I_c}\langle\psi,\bar{\varphi}\rangle d|\bar{q}|\\
			\le&\frac{1}{\alpha}\|\psi\|_{C(\bar{I}_c;(H^1(\omega)^*))}\|\bar{\varphi}\|_{C(\bar{I}_c;H^1(\omega))}\|\bar{q}\|_{\mathcal M(\bar{I}_c;L^2(\omega))}.
		\end{align*}
		Thus, $\|\bar{q}\|_{\mathcal M(\bar{I}_c;H^1(\omega))}\le C\frac{1}{\alpha}\|\bar{\varphi}\|_{C(\bar{I}_c;H^1(\Omega))}\|\bar{q}\|_{\mathcal{M}(\bar{I}_c;L^2(\omega))}$ (cf. \cite{LM2009}). This combining with the estimate \eqref{estimate_adjoint_c} yields the conclusion. We thus finish the proof. \end{proof}
	\section{Finite element approximations}
	In this section we consider the space-time finite element approximation for optimal control problems. 
	
	\subsection{Notations for finite element methods}
	Let $\{\mathcal{T}_h\}_{h>0}$ be a family of quasi-uniform and shape regular triangulations of $\Omega$ in the sense of  Ciarlet~\cite{PGC1978}, such that $\bar\Omega=\cup_{K\in\mathcal{T}_h}\bar K$, where $h$ is the mesh parameter. Define the piecewise linear and continuous finite element space
	\begin{equation}
		V_h:=\left \{v_h\in H_0^1(\Omega):\ v_h|_K\in P_1(K),\ \forall K\in \mathcal{T}_h\right\},\nonumber
	\end{equation}
	where $ P_1(K)$ denotes the space of linear functions in $K$. 
	
	For simplicity, we assume that $\omega\subseteq\Omega$ is polygonal and the restriction of $\mathcal{T}_h$ on $\omega$ gives a partition of $\omega$. Thus, we define $U_h:=V_h|_\omega$ consisting of  piecewise linear and continuous functions in $\omega$.
	
	Next, we divide  $[0,T]$ into a family of subintervals $I_m:=(t_{m-1},t_m],\,m=1,2,\cdots,M-1,\,I_M:=(t_{M-1},t_M)$ with step size $\tau_m=t_m-t_{m-1}$ such that $[0,T]=\{0\}\cup\mathop{\cup}\limits_{i=1}^M I_m\cup\{T\}$, where  $0=t_0<t_1<\cdots <t_M=T$.  We assume that there exist $1<k_1<k_2<M$ such that $\bar{I}_c=[t_1,t_2]=\{t_{k_1-1}\}\cup \mathop{\cup}\limits_{m=k_1}^{k_2}I_{m}$, this can be achieved by setting $M$ sufficiently large. 
	The maximal time step is defined by  $\tau:=\mathop{\max}\limits_{1\le m\le M }{\tau_m}$. %the time partition step size by
	%\begin{eqnarray}
	%	\tau=\max\limits_{1\leq m\leq M}\tau_m,\nonumber
	%\end{eqnarray}
	%where $\tau_m=t_m-t_{m-1}$.
	
	Now, we are ready to define two time semi-discrete finite element spaces consisting of either piecewise constant or piecewise linear and continuous Ansatz. Define (cf. \cite{DanielsHinzeVierling2015})
	\begin{eqnarray}
		P_\tau:=\{v\in C(\bar I;H_0^1(\Omega)):\ v|_{I_m}\in \mathcal{P}_1(I_m;H_0^1(\Omega)),\ m=1,2,\dots,M \},\nonumber\\
		Y_\tau:=\{v\in L^2(I;H_0^1(\Omega)):\ v|_{I_m}\in \mathcal{P}_0(I_m;H_0^1(\Omega)),\ m=1,2,\dots,M,\ v(T)\in H_0^1(\Omega)  \},\nonumber
	\end{eqnarray}
	where $\mathcal{P}_i(I_m;H^1_0(\Omega))$ ($i=0,1$) denotes the set of polynomial functions of degree at most $i$ on time interval $I_m$ and valued in $H^1_0(\Omega)$, and let $$P_\tau^0:=\{v_\tau\in P_\tau:\ v_\tau(T)=0\}.$$
	
	The notation $\sigma=(\tau,h)$ denotes the vector of two discretization parameters $\tau$ and $h$. In order to introduce the Petrov-Galerkin scheme for parabolic equations, we  also need to define the following two time-space finite element spaces: 
	\begin{eqnarray}
		P_\sigma:=\left\{v\in P_\tau:\, v|_{I_m}\in \mathcal{P}_1(I_m;V_h),\ m=1,2,\dots,M \right\},\nonumber\\
		%P_\sigma^*:=\{v\in C([0,T];V_h):\ v|_{I_m^*}\in \mathcal{P}_1(I_m^*,V_h),m=1,2,\dots,M+1 \},\nonumber\\
		Y_\sigma:=\left\{v\in Y_\tau:\ v|_{I_m}\in \mathcal{P}_0(I_m;V_h),\ m=1,2,\dots,M,\ v(T)\in V_h \right\},\nonumber
	\end{eqnarray}
	where the definition of $\mathcal{P}_i(I_m;V_h)$ ($i=0,1$) is similar to $\mathcal{P}_i(I_m;H^1_0(\Omega))$ ($i=0,1$). We set $$P_\sigma^0:=\{v_\sigma\in P_\sigma:\ v_\sigma(T)=0\}.$$ It is clear that $P_\sigma\subset L^2(I;H^1_0(\Omega))\cap H^1(I;L^2(\Omega))$ and $Y_\sigma\subseteq L^2(I;H^1_0(\Omega))$.
	
	Let $\{x_j\}_{j=1}^{N_h}$ be the interior nodes of the mesh $\mathcal{T}_h$, and $\{\psi_{x_j}\}_{j=1}^{N_h}$, $\{e_{t_m}\}_{m=0}^M$ be the nodal basis functions of $\mathcal{P}_1$ elements of $H^1(\Omega)$ and $H^1(I)$, respectively.  Obviously, $P_\sigma$ and $Y_\sigma$ can be rewritten as 
	\begin{eqnarray}
		P_\sigma={\rm span}\{\psi_{x_j}\otimes e_{t_m}: 1\leq j\leq N_h,\ 0\leq m\leq M \},\nonumber\\
		Y_\sigma={\rm span}\{\psi_{x_j}\otimes \chi_{m}: 1\leq j\leq N_h,\ 1\leq m\leq M \},\nonumber
	\end{eqnarray}
	where $\chi_m$ denotes the characteristic function on $I_m$.  We define the indices
	\begin{eqnarray}
		\mathcal{I}_\sigma=\Big\{(j,m):(x_j,t_m)\in\bar\omega\times \bar I_c,1\leq j\leq N_h,\ 0\leq m\leq M \Big \}, \  \mathcal{I}_\tau = \big\{m:t_m\in \bar I_c, 0\leq m\leq M\big\},\nonumber
	\end{eqnarray}
	and the discrete spaces
	\begin{eqnarray}
		U_\tau ={\rm span}\{\delta_{t_m}:m\in \mathcal{I}_\tau\}\subset \mathcal{M}(\bar I_c),\quad V_\tau ={\rm span}\{e_{t_m}|_{\bar I_c}:m\in \mathcal{I}_\tau\}\subset {C}(\bar I_c),\nonumber\\
		U_\sigma ={\rm span}\{\psi_{x_j}|_{\bar\omega}\otimes \delta_{t_m}:(j,m)\in \mathcal{I}_\sigma\},\quad V_\sigma ={\rm span}\{\psi_{x_j}\otimes e_{t_m}|_{\bar\omega\times \bar I_c}:(j,m)\in \mathcal{I}_\sigma\}.\nonumber
	\end{eqnarray}
	For $q_\tau\in U_\tau$ and $v_\tau\in V_\tau$ we identify them with $\vec{q}_\tau=(q_1,\dots,q_{M_c})$ and $\vec{v}_\tau=(v_1,\dots,v_{M_c})$ where $M_c$ is the cardinality of $\mathcal{I}_\tau$. The linear functions in $V_\tau$ attain their maximum and minimum at the nodes. Therefore, for $v_\tau\in V_\tau$ we define
	\begin{eqnarray}
		\|v_\tau\|_{L^\infty(I_c)}=\max\limits_{1\leq m\leq M_c}|v_m|.\nonumber
	\end{eqnarray}
	Similarly, we have for $q_\tau\in U_\tau$ that
	\begin{eqnarray}
		\|q_\tau\|_{\mathcal{M}(\bar I_c)}=\sup\limits_{v\in {C}(\bar I_c),\|v\|_{L^\infty(I_c)}\le1}\sum\limits_{m=1}^{M_c}q_m\langle \delta_{t_m},v\rangle_{\bar I_c}=\sum\limits_{m=1}^{M_c}|q_m|.\nonumber
	\end{eqnarray}
	%For $q_\sigma\in U_\sigma$ and $v_\sigma\in V_\sigma$, we have
	%\begin{eqnarray}
	%\|q_\sigma\|^2_{L^2(\omega;\mathcal{M}(\bar I_c))}&=&\int_\omega\|\sum\limits_{(j,m)\in \mathcal{I}_\sigma}q_{j,m}\psi_{x_j}|_{\bar\omega}\otimes \delta_{t_m}\|_{\mathcal{M}(\bar I_c)}^2dx\nonumber\\
	%&=&\int_\omega\sum\limits_{m}|\sum\limits_{j}q_{j,m}\psi_{x_j}|_{\bar\omega}|^2dx\nonumber
	%\end{eqnarray}
	
	Given two functions $u\in L^2(I;H^1_0(\Omega))$ with $u(T)\in L^2(\Omega)$, $v\in L^2(I;H^1_0(\Omega))\cap H^1(I;H^{-1}(\Omega))$, we define a bilinear form $A(u,v)\to\mathbb{R}$ as follows:
	\begin{eqnarray}
		A(u,v):=-\langle u,\partial_t v\rangle_{L^2(I;H^1_0,H^{-1})}+\int_I(\nabla u(t),\nabla v(t))dt+(u(T),v(T)),\nonumber
	\end{eqnarray}
	where $\langle\cdot,\cdot\rangle_{L^2(I;H^1_0,H^{-1})}$ denotes the duality pairing between $L^2(I;H^1_0(\Omega))$ and $L^2(I;H^{-1}(\Omega))$.
	
	If, in addition, $u_\tau\in Y_\tau$ and $v_\tau\in P_\tau$, applying integration by parts to the bilinear form $A(u_\tau,v_\tau)$ we can obtain the following dual representation:
	\begin{equation}\label{dual_form}
		A(u_\tau,v_\tau)=\sum_{m=1}^M([u_\tau]_m,v_\tau(t_m))_{L^2(\Omega)}+(u_{\tau,0}^+,v_\tau(0))_{L^2(\Omega)}+\int_I(\nabla u_\tau(t),\nabla v_\tau(t))dt,
	\end{equation}
	where 
	\begin{align*}
		&u_{\tau,m+1}=u^+_{\tau,m}:=\mathop{\rm lim}\limits_{\epsilon\to 0^+}u_\tau(t_m+\epsilon),\quad
		u_{\tau,m}=u^-_ {\tau,m}:=\mathop{\rm lim}\limits_{\epsilon\to 0^+}u_\tau(t_m-\epsilon),\\
		&[u]_m:=u^+_{\tau,m}-u^-_{\tau,m},\ m=1,2,\cdots,M-1,\ [u_\tau]_M=u_\tau(T)-u^-_ {\tau,M},
	\end{align*}
	and $u_{\tau,m}:=u_\tau|_{I_m},\  m=1,2,\cdots,M$.
	
	In view of the identity \eqref{identity_2}, the very weak solution to the state equation \eqref{PDE_state} reads: Find $u\in L^2(I;H^1_0(\Omega))$ such that
	\begin{eqnarray}
		A(u,v)=\!\!\int_I(f,v)dt+\langle q,v\rangle_{\bar I_c\times \omega}+(u_0,v(x,0))\quad\forall v\in L^2(I;H^1_0(\Omega))\cap H^{1}(I;H^{-1}(\Omega)).\nonumber
	\end{eqnarray}
	Similarly, the weak formulation of the backward  equation (\ref{backward_PDE}) can be rewritten as: Find  $z\in L^2(I;H^1_0(\Omega))\cap H^{1}(I;H^{-1}(\Omega))$ such that
	\begin{eqnarray}\label{weak_formation_b}
		A(v,z)=\int_I(g,v)dt+(v(T),z_T)\quad \forall v\in L^2(I;H^1_0(\Omega))\cap H^{1}(I;H^{-1}(\Omega)).
	\end{eqnarray}
	
	\subsection{Interpolation and projection operators}
	In the following we introduce some interpolation operators defined in $\bar I_c$ and give their properties whose proofs are very similar to \cite[Theorem 3.1]{CasasClasonKunisch-2012}, see also \cite[Proposition 4.1]{CasasKunisch-2016}.
	\begin{lemma}
		Let the linear operators $\Lambda_\tau$ and $\Pi_\tau$ be defined as follows:	
		\begin{eqnarray}
			\Lambda_\tau:\mathcal{M}(\bar I_c)\rightarrow U_\tau\subset \mathcal{M}(\bar I_c),\quad \Lambda_\tau q:=\sum\limits_{m\in \mathcal{I}_\tau}\delta_{t_m}\int_{\bar I_c}e_{t_m}dq,\nonumber\\
			\Pi_\tau:{C}(\bar I_c)\rightarrow V_\tau\subset {C}(\bar I_c),\quad \Pi_\tau v:=\sum\limits_{m\in \mathcal{I}_\tau}v(t_m)e_{t_m}.\nonumber
		\end{eqnarray}
		Then for every $q\in \mathcal{M}(\bar I_c)$, $v\in {C}(\bar I_c)$ and $v_\tau\in V_\tau$, there hold
		\begin{eqnarray}
			%\langle q,v_\tau \rangle_{I_c}=\langle \Lambda_\tau q,v_\tau \rangle_{I_c},\label{inter_1}\\
			\langle q,\Pi_\tau v \rangle_{\bar I_c}=\langle \Lambda_\tau q,v \rangle_{\bar I_c},\label{inter_2}\\
			\|\Lambda_\tau q\|_{\mathcal{M}(\bar I_c)}\leq \|q\|_{\mathcal{M}(\bar I_c)},\label{inter_3}\\
			\Lambda_\tau q\stackrel{*}{\rightharpoonup} q\ \mbox {\rm in} \ \mathcal{M}(\bar I_c)\quad \mbox{and}\ \|\Lambda_\tau q\|_{\mathcal{M}(\bar I_c)}\stackrel{\tau\rightarrow 0^+}{\longrightarrow} \| q\|_{\mathcal{M}(\bar I_c)},\label{inter_4}\\
			\|q-\Lambda_\tau q\|_{(H^{1}(I_c))'}\leq C\tau^{1\over 2} \|q\|_{\mathcal{M}(\bar I_c)},\quad \|q-\Lambda_\tau q\|_{(W^{1,\infty}(I_c))'}\leq C\tau  \|q\|_{\mathcal{M}(\bar I_c)}.\label{inter_10}
		\end{eqnarray}
	\end{lemma}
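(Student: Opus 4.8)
The plan is to establish the four claimed properties of the operators $\Lambda_\tau$ and $\Pi_\tau$ in a logical sequence, starting from the duality relation \eqref{inter_2}, since it will be the workhorse for the stability and convergence estimates. First I would verify \eqref{inter_2} by direct computation: for $q\in\mathcal{M}(\bar I_c)$ and $v\in C(\bar I_c)$, expand $\langle q,\Pi_\tau v\rangle_{\bar I_c}=\sum_{m\in\mathcal{I}_\tau}v(t_m)\langle q,e_{t_m}\rangle_{\bar I_c}=\sum_{m\in\mathcal{I}_\tau}v(t_m)\int_{\bar I_c}e_{t_m}\,dq$, and observe that the right-hand side is exactly $\langle \Lambda_\tau q,v\rangle_{\bar I_c}$ by the definition of $\Lambda_\tau$ together with $\langle\delta_{t_m},v\rangle=v(t_m)$. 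This is immediate from the definitions and is just a rearrangement of the finite sum.

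For the stability bound \eqref{inter_3}, I would use duality together with the fact that $\Pi_\tau$ is a norm-nonincreasing operator on $C(\bar I_c)$: because $V_\tau$ consists of piecewise-linear interpolants through the nodal values and such interpolants attain their extrema at the nodes $\{t_m\}$, one has $\|\Pi_\tau v\|_{C(\bar I_c)}\le\|v\|_{C(\bar I_c)}$ for all $v$. Then \eqref{inter_2} gives
\begin{equation*}
\|\Lambda_\tau q\|_{\mathcal{M}(\bar I_c)}=\sup_{\|v\|_{C(\bar I_c)}\le1}\langle\Lambda_\tau q,v\rangle_{\bar I_c}=\sup_{\|v\|_{C(\bar I_c)}\le1}\langle q,\Pi_\tau v\rangle_{\bar I_c}\le\sup_{\|w\|_{C(\bar I_c)}\le1}\langle q,w\rangle_{\bar I_c}=\|q\|_{\mathcal{M}(\bar I_c)}.
\end{equation*}
For the weak-$*$ convergence in \eqref{inter_4}, I would test against an arbitrary fixed $v\in C(\bar I_c)$: by \eqref{inter_2}, $\langle\Lambda_\tau q,v\rangle=\langle q,\Pi_\tau v\rangle$, and since $\Pi_\tau v\to v$ uniformly on $\bar I_c$ as $\tau\to0$ (standard convergence of piecewise-linear interpolation for continuous functions, using uniform continuity of $v$), the right-hand side converges to $\langle q,v\rangle$. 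The norm convergence then follows by combining weak-$*$ lower semicontinuity (giving $\|q\|\le\liminf\|\Lambda_\tau q\|$) with the upper bound \eqref{inter_3} (giving $\limsup\|\Lambda_\tau q\|\le\|q\|$).

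The main obstacle will be the quantitative rates in \eqref{inter_10}, since these require genuine interpolation-error estimates rather than soft functional-analytic arguments. I would again route through duality: for $\phi\in H^1(I_c)$,
\begin{equation*}
\langle q-\Lambda_\tau q,\phi\rangle=\langle q,\phi-\Pi_\tau\phi\rangle\le\|q\|_{\mathcal{M}(\bar I_c)}\,\|\phi-\Pi_\tau\phi\|_{C(\bar I_c)},
\end{equation*}
using \eqref{inter_2} and the fact that $\Pi_\tau$ leaves the action on $V_\tau$ consistent. The crux is then the interpolation bound $\|\phi-\Pi_\tau\phi\|_{C(\bar I_c)}\le C\tau^{1/2}\|\phi\|_{H^1(I_c)}$, which follows from the embedding $H^1(I_c)\hookrightarrow C(\bar I_c)$ in one dimension and a scaling argument on each subinterval $I_m$ (on a single interval of length $\tau_m$ the nodal interpolation error in $L^\infty$ is controlled by $\tau_m^{1/2}\|\phi'\|_{L^2(I_m)}$). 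Dividing by $\|\phi\|_{H^1(I_c)}$ and taking the supremum yields the first estimate. The second estimate is analogous but cleaner: for $\phi\in W^{1,\infty}(I_c)$ the first-order interpolation error on each $I_m$ is bounded by $\tau_m\|\phi'\|_{L^\infty(I_m)}\le\tau\|\phi\|_{W^{1,\infty}(I_c)}$, giving $\|\phi-\Pi_\tau\phi\|_{C(\bar I_c)}\le C\tau\|\phi\|_{W^{1,\infty}(I_c)}$ and hence the $O(\tau)$ bound. The only subtlety to watch is ensuring the interpolation estimates hold uniformly over the mesh, which is guaranteed by the quasi-uniformity hypothesis on the temporal partition, and handling the precise definition of the dual norms $(H^1(I_c))'$ and $(W^{1,\infty}(I_c))'$ as suprema over the corresponding unit balls.
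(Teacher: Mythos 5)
Your proposal is correct and follows essentially the same route as the paper: the properties \eqref{inter_2}--\eqref{inter_4} are elementary (the paper dismisses them as trivial), and for \eqref{inter_10} the paper likewise uses the duality relation \eqref{inter_2} together with the Lagrange interpolation bound $\|v-\Pi_\tau v\|_{C(\bar I_c)}\le C\tau^{1-1/s}\|v\|_{W^{1,s}(I_c)}$, specialized to $s=2$ and $s=\infty$, exactly as in your elementwise scaling argument. The only cosmetic difference is that the paper handles both rates at once via the parameter $s$, and your appeal to quasi-uniformity of the time mesh is unnecessary since the interpolation estimates are local on each $I_m$ with $\tau_m\le\tau$.
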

	\begin{proof}
		The proofs of \eqref{inter_2}-\eqref{inter_4} are trivial, thus we only provide the proof for  (\ref{inter_10}). For an arbitrary $v\in H^1( I_c)$, using (\ref{inter_2}) and the standard Lagrange interpolation error estimate we have for $s>1$ that
		\begin{eqnarray}
			\langle q-\Lambda_\tau q,v\rangle_{\bar I_c}=\langle q,v-\Pi_\tau v\rangle_{\bar I_c}\leq \|q\|_{\mathcal{M}(\bar I_c)}\|v-\Pi_\tau v\|_{{C}(\bar I_c)}\leq C\tau^{1-{1\over s}}\|q\|_{\mathcal{M}(\bar I_c)}\|v\|_{W^{1,s}(I_c)}.\nonumber
		\end{eqnarray}
		From the duality we can obtain two desired results by setting $s=2$ and $s=\infty$. 
	\end{proof}
	
	Let $\pi_h:\,L^2(\omega)\to V_h|_{\omega}$ be the usual $L^2$-projection defined by 
	$$(v-\pi_h v,\varphi_h)_{L^2(\omega)}=0\quad\forall\varphi_h\in V_h|_{\omega},$$
	then there holds
	$$
	\|v-\pi_hv\|_{L^2(\omega)}+h\|v-\pi_hv\|_{H^1(\omega)}\le Ch^m\|v\|_{H^m(\omega)}\quad \forall v\in H^m(\omega),\ m=1,2,
	$$
	where $C>0$ is a constant independent of $h$ and $v$. Note that the application of the operator $\pi_h$ to time-dependent arguments has to be understood pointwisely in time. Below, we will extend $\pi_h$ to a negative exponent Sobolev space that includes $L^2(\omega)$.

	Let $V:=H^1(\omega)$ (resp. $H_0^1(\Omega)$), $H:=L^2(\omega)$ (resp. $L^2(\Omega)$), then the inclusion $V\subseteq H$ is dense and continuous. Note that $V\hookrightarrow H=H^*\hookrightarrow V^*$ is a Gelfand triple, where $H\hookrightarrow V^*$ is given by $y\in H\to (y,\cdot)_{L^2(\Omega)}\in H^*\subseteq V^* $. Therefore, we extend in the following definition the usual projection $\pi_h$ from $H$ to $V^*$.
	
	\begin{definition}
		Define the action of the $L^2$-projection $\pi_h$ on $V^*$ as
		\begin{align*}
			\pi_h:V^*\to V_h|_{\omega}\quad
			v\to\pi_hv,
		\end{align*}
		where $\pi_hv\in V_h|_{\omega}$ satisfies
		\begin{equation}\label{identity_5}
			(\pi_hv,\varphi_h)_{H}=\langle v,\varphi_h\rangle_{V^*,V}\quad    \forall\varphi_h\in V_h|_{\omega}\subset V.
		\end{equation}	 
		Furthermore, we also define the following two interpolation operators: 	
		%	Let the linear operators $\Lambda_\sigma$ and $\Pi_\sigma$ be defined as follows:	
		\begin{eqnarray}
			\Lambda_\sigma:\mathcal{M}(\bar I_c;H)\rightarrow U_\sigma\subset \mathcal{M}(\bar I_c;H),\quad \Lambda_\sigma q:=\pi_h(\Lambda_\tau q)=\sum\limits_{(j,m)\in \mathcal{I}_\sigma}q_{j,m}\delta_{t_m}\otimes\psi_{x_j}|_{\bar\omega},\nonumber
			\\
			\Pi_\sigma:{C}(\bar I_c;V^*)\rightarrow V_\sigma\subset {C}(\bar I_c;V^*),\quad \Pi_\sigma v:=\pi_h(\Pi_\tau v)=\sum\limits_{(j,m)\in \mathcal{I}_\sigma}v_{j,m}e_{t_m}\otimes\psi_{x_j}|_{\bar I_c\times\bar\omega},\nonumber
		\end{eqnarray}
		where $q_{j,m}:=\pi_h(\int_{\bar I_c}e_{t_m}dq)(x_j)$ and $v_{j,m}:=\pi_h(v(t_m))(x_j)$.%, $\pi_h:L^2(\omega)\rightarrow V_{h,\omega}:={\rm span}\{\psi_{x_j} |_{\bar\omega}:x_j\in\bar{\omega}\}$ is the usual $L^2$ projection operator on $\omega$. 
		
	\end{definition}
	
	It is easy to check that there exists a unique $\pi_hv\in V_h|_{\omega}$ such that the identity \eqref{identity_5} holds for any $v\in V^*$. Moreover, $\pi_h|_{H}:H\to V_h|_{\omega}$ is consistent with the usual $L^2$-projection and $\pi_h$ is stable, i.e.,
	\begin{align*}
		\|\pi_hv\|_{V^*}\le \|v\|_{V^*}, \quad
		\|\pi_h{v}\|_{H}\le \|{v}\|_{H}
	\end{align*}
	for $v\in V^*$ and  ${v}\in H$, respectively.
	The definition of above two interpolation operators  $\Lambda_\sigma$ and  $\Pi_\sigma$ are very similar to \cite[Theorem 4.2]{CasasClasonKunisch-2013}, see also \cite[Proposition 4.2]{CasasKunisch-2016}. 
	\begin{lemma}
		%	the Cl\'ement-type quasi-interpolation operator (cf. \cite{Clement1975}). Denote by $S_j={\rm supp}(\psi_{x_j}|_{\bar\omega})$ and $P_1(S_j)$ the space of linear polynomials in $S_j$, then $\pi_h(v(t_m))(x_j):=R_h(v(t_m))(x_j)$, where
		%	\begin{eqnarray}
		%	R_h(v(t_m))\in P_1(S_j):\ \ \int_{S_j}R_h(v(t_m))\psi_hdx=\int_{S_j}v(t_m)\psi_hdx\quad\forall \psi_h\in P_1(S_j).\nonumber
		%	\end{eqnarray}	 
		For every $q_\sigma\in U_\sigma$ and $v_\sigma\in V_\sigma$ one have
		\begin{eqnarray}
			\Lambda_\sigma q_\sigma=q_\sigma\quad\mbox{and}\quad \Pi_\sigma v_\sigma=v_\sigma.\label{inter_9}
		\end{eqnarray}
		Moreover, there hold
		\begin{eqnarray}
			%\langle q,v_\sigma \rangle=\langle \Lambda_\sigma q,v_\sigma \rangle,\label{inter_5}\quad
			\langle q,\Pi_\sigma v \rangle_{\bar I_c\times \omega}=\langle \Lambda_\sigma q,v \rangle_{\bar I_c\times \omega}
			\quad \forall\,(q,v)\in \mathcal{M}(\bar I_c;S) \times {C}(\bar I_c;S^*),%\left( \mathcal{M}(\bar I_c;V)\!\times\! \mathcal{C}(\bar I_c;V^*)\right),
			\label{inter_5}
			\\
			\|\Lambda_\sigma q\|_{\mathcal{M}(\bar I_c;S)}\leq \|q\|_{\mathcal{M}(\bar I_c;S)},\quad \|\Pi_\sigma v\|_{{C}(\bar I_c;S^*)}\leq \|v\|_{{C}(\bar I_c;S^*)},\label{inter_7}\\
			%\|\Lambda_\sigma q\|_{\mathcal{M}(\bar I_c;V)}\leq \|q\|_{\mathcal{M}(\bar I_c;V)},\quad \|\Pi_\sigma v\|_{\mathcal{C}(\bar I_c;V^*)}\leq \|v\|_{\mathcal{C}(\bar I_c;V^*)},\label{inter_91}
			%\\
			\Lambda_\sigma q\stackrel{*}{\rightharpoonup} q\in \mathcal{M}(\bar I_c;H),\quad\ \|\Lambda_\sigma q\|_{\mathcal{M}(\bar I_c;H)}\stackrel{|\sigma|\to 0}{\longrightarrow} \| q\|_{\mathcal{M}(\bar I_c;H)},\label{inter_8}
		\end{eqnarray}
		where $S=H$ or $V$.
	\end{lemma}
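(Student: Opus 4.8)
The plan is to exploit the factorizations $\Lambda_\sigma=\pi_h\circ\Lambda_\tau$ and $\Pi_\sigma=\pi_h\circ\Pi_\tau$, so that every assertion reduces to the already established temporal properties \eqref{inter_2}--\eqref{inter_4} of $\Lambda_\tau,\Pi_\tau$ together with the stability and self-adjointness of the spatial projection $\pi_h$. A recurring elementary observation is that $\pi_h$ (acting pointwise in time) commutes with the purely temporal operators, e.g. $\pi_h\Lambda_\tau q=\Lambda_\tau\pi_h q$, since $\Lambda_\tau$ only integrates against scalar functions of $t$.

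First I would dispose of the idempotency \eqref{inter_9}. Any $q_\sigma\in U_\sigma$ (resp. $v_\sigma\in V_\sigma$) is a finite combination of the tensor basis $\psi_{x_j}\otimes\delta_{t_m}$ (resp. $\psi_{x_j}\otimes e_{t_m}$). Since $\Lambda_\tau\delta_{t_m}=\delta_{t_m}$ and $\Pi_\tau e_{t_m}=e_{t_m}$ because $e_{t_k}(t_m)=\delta_{km}$, and since $\pi_h\psi_{x_j}=\psi_{x_j}$ as $\psi_{x_j}\in V_h|_\omega$, both operators act as the identity on their ranges. The stability bounds \eqref{inter_7} follow in the same spirit: for $\Pi_\sigma$ I would combine the sup-norm stability of $\Pi_\tau$ with $\|\pi_h\|_{S^*\to S^*}\le 1$ (the stated contractivity of $\pi_h$ on $H$ and on $V^*$), while for $\Lambda_\sigma$ I would use $|\pi_h\mu|(B)\le|\mu|(B)$ for the variation measure together with \eqref{inter_3}; here the case $S=H$ is immediate from $\|\pi_h\|_{H\to H}\le 1$, and the case $S=V$ invokes the $H^1(\omega)$-stability of the $L^2$-projection on quasi-uniform meshes.

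The core of the lemma is the adjoint relation \eqref{inter_5}, which I would obtain through the chain
\begin{align*}
\langle q,\Pi_\sigma v\rangle_{\bar I_c\times\omega}
&=\langle q,\pi_h\Pi_\tau v\rangle_{\bar I_c\times\omega}
=\langle\pi_h q,\Pi_\tau v\rangle_{\bar I_c\times\omega}
=\langle\Lambda_\tau\pi_h q,v\rangle_{\bar I_c\times\omega}\\
&=\langle\pi_h\Lambda_\tau q,v\rangle_{\bar I_c\times\omega}
=\langle\Lambda_\sigma q,v\rangle_{\bar I_c\times\omega}.
\end{align*}
The first and last equalities are the definitions; the third is the vector-valued analogue of the temporal duality \eqref{inter_2}, which reduces to the scalar statement componentwise since $\pi_h q$ takes values in the finite-dimensional space $V_h|_\omega$; and the fourth is the commutation noted above. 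The delicate step is the second equality $\langle q,\pi_h w\rangle=\langle\pi_h q,w\rangle$: working with the polar decomposition $dq=q'\,d|q|$, I would write both sides as $\int_{I_c}\langle w(t),\pi_h q'(t)\rangle_{S^*,S}\,d|q|(t)$. When $S=H$ this is plain self-adjointness of $\pi_h$ in $L^2(\omega)$; when $S=V$ and $w(t)\in V^*$ only, one first uses the $H$-orthogonality of $q'(t)-\pi_h q'(t)$ to $V_h|_\omega$ to replace $q'(t)$ by $\pi_h q'(t)$, and then invokes the defining identity \eqref{identity_5} of $\pi_h$ on $V^*$. Reconciling this Gelfand-triple bookkeeping is the part I expect to require the most care.

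Finally, for \eqref{inter_8} I would take $S=H$ and, for any $v\in {C}(\bar I_c;L^2(\omega))$, rewrite $\langle\Lambda_\sigma q,v\rangle=\langle q,\Pi_\sigma v\rangle$ via \eqref{inter_5}. Weak-$*$ convergence then reduces to $\|\Pi_\sigma v-v\|_{{C}(\bar I_c;L^2(\omega))}\to0$, which I would split, after inserting $\pi_h v$ and using $\|\pi_h\|_{H\to H}\le1$, as $\|\Pi_\tau v-v\|_{{C}(\bar I_c;L^2(\omega))}+\|\pi_h v-v\|_{{C}(\bar I_c;L^2(\omega))}$; the first term vanishes because temporal Lagrange interpolation converges uniformly for continuous vector-valued functions, and the second because $\pi_h\to\mathrm{id}$ uniformly on the compact set $\{v(t):t\in\bar I_c\}\subset L^2(\omega)$. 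The norm convergence $\|\Lambda_\sigma q\|_{\mathcal{M}(\bar I_c;L^2(\omega))}\to\|q\|_{\mathcal{M}(\bar I_c;L^2(\omega))}$ is then immediate: the stability \eqref{inter_7} gives $\limsup\|\Lambda_\sigma q\|\le\|q\|$, while weak-$*$ lower semicontinuity of the total-variation norm gives $\|q\|\le\liminf\|\Lambda_\sigma q\|$.
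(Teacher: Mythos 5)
Your handling of \eqref{inter_9}, \eqref{inter_5}, the $\Pi_\sigma$ half of \eqref{inter_7}, and \eqref{inter_8} is correct and is in substance the paper's own argument. The paper proves \eqref{inter_5} by expanding $\Pi_\tau v=\sum_m v(t_m)e_{t_m}$ and moving $\pi_h$ across the $S^*$--$S$ pairing term by term; that swap is exactly the Gelfand-triple step you isolate via the polar decomposition (first use $H$-orthogonality to replace $q'(t)$ by $\pi_h q'(t)$, then the defining identity \eqref{identity_5}), and your commutation $\pi_h\Lambda_\tau=\Lambda_\tau\pi_h$ merely repackages the paper's regrouping of the sum. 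The $\Pi_\sigma$ bound in \eqref{inter_7} is obtained, as you do, from the asserted contractivity of $\pi_h$ on $S^*$; and the paper dismisses \eqref{inter_8} as obvious, so your compactness argument (uniform convergence of $\pi_h$ on the compact set $\{v(t):t\in\bar I_c\}$, plus lower semicontinuity for the norm convergence) is a correct and welcome elaboration.

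The one genuine gap is your direct proof of the first inequality in \eqref{inter_7} in the case $S=V=H^1(\omega)$. Your route needs $\|\pi_h\|_{V\to V}\le 1$, because the variation bound $|\pi_h\mu|(B)\le|\mu|(B)$ for $V$-valued measures is nothing but contractivity of $\pi_h$ in the $V$-norm applied to each $\mu(B_n)$ in a disjoint partition. But the $H^1(\omega)$-stability of the $L^2$-projection on quasi-uniform meshes that you invoke only gives $\|\pi_h w\|_{H^1(\omega)}\le C\|w\|_{H^1(\omega)}$ with a constant $C$ that is in general strictly larger than one, so your argument delivers $\|\Lambda_\sigma q\|_{\mathcal{M}(\bar I_c;V)}\le C\|q\|_{\mathcal{M}(\bar I_c;V)}$ rather than the stated inequality with constant one; in particular your displayed claim $|\pi_h\mu|(B)\le|\mu|(B)$ is unjustified for $S=V$. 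The paper avoids this entirely by deducing the $\Lambda_\sigma$ bound by duality from the two facts you have already established: $\|\Lambda_\sigma q\|_{\mathcal{M}(\bar I_c;S)}=\sup_{\|v\|_{C(\bar I_c;S^*)}\le 1}\langle\Lambda_\sigma q,v\rangle_{\bar I_c\times\omega}=\sup_{\|v\|_{C(\bar I_c;S^*)}\le 1}\langle q,\Pi_\sigma v\rangle_{\bar I_c\times\omega}\le\|q\|_{\mathcal{M}(\bar I_c;S)}$, using \eqref{inter_5} and the second half of \eqref{inter_7}; this leans only on the $S^*$-contractivity of $\pi_h$ that the paper posits and that you yourself accept when bounding $\Pi_\sigma$. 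Replacing your variation-measure argument by this one-line duality step closes the gap. (For $S=H$ your direct argument is fine, and that is the case needed later for $J_\sigma(\Lambda_\sigma\hat q_\sigma)\le J_\sigma(\hat q_\sigma)$; but the lemma asserts the constant-one bound for $S=V$ as well, so the proof must deliver it.)
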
	
	\begin{proof}
		%At first we prove (\ref{inter_9}). Note that every $q_\sigma\in U_\sigma$ and $v_\sigma\in V_\sigma$ can be written as
		%\begin{eqnarray}
		%	q_\sigma =\sum\limits_{(j,m)\in \mathcal{I}_\sigma}q_{j,m}\delta_{t_m}\otimes\psi_{x_j}|_{\bar\omega}=\sum\limits_mq_m\delta_{t_m},\quad v_\sigma =\sum\limits_{(j,m)\in \mathcal{I}_\sigma}v_{j,m}e_{t_m}\otimes\psi_{x_j}|_{\bar I_c\times\bar\omega}=\sum\limits_mv_me_{t_m}.\nonumber
		%\end{eqnarray}
		%Recall that $\pi_h$ is invariant with respect to piecewise linear and continuous polynomials, i.e., $\pi_h(\psi_{x_j}|_{\bar\omega})=\psi_{x_j}|_{\bar\omega}$, it is easy to show that $\Lambda_\sigma (\delta_{t_m}\otimes\psi_{x_j}|_{\bar\omega})=\delta_{t_m}\otimes\psi_{x_j}|_{\bar\omega}$ and $\Pi_\sigma(e_{t_m}\otimes\psi_{x_j}|_{\bar I_c\times\bar\omega})=e_{t_m}\otimes\psi_{x_j}|_{\bar I_c\times\bar\omega}$. This implies (\ref{inter_9}) from the linearity of the operators.
		A simple calculation gives
		%\begin{eqnarray}
		%	\langle q,\Pi_\sigma v \rangle&=&\int_{\bar I_c}\int_\omega \sum\limits_{(j,m)\in \mathcal{I}_\sigma}v_{j,m}e_{t_m}\otimes\psi_{x_j}|_{\bar I_c\times\bar\omega}dxdq\nonumber\\
		%	&=&\sum\limits_{(j,m)\in \mathcal{I}_\sigma}v_{j,m}\int_\omega \psi_{x_j}|_{\bar\omega}(\int_{\bar I_c}e_{t_m}dq)dx
		%	=\sum\limits_{(j,m)\in \mathcal{I}_\sigma}v_{j,m}\int_\omega \psi_{x_j}|_{\bar\omega}\pi_h(\int_{\bar I_c}e_{t_m}dq)dx\nonumber\\
		%	&=&\sum\limits_{m}\int_\omega \Big(\sum\limits_jv_{j,m}\psi_{x_j}|_{\bar\omega}\Big)\pi_h(\int_{\bar I_c}e_{t_m}dq)dx=\sum\limits_{m}\int_\omega \pi_h(v(t_m))\pi_h(\int_{\bar I_c}e_{t_m}dq)dx\nonumber\\
		%	&=&\sum\limits_{m}\int_\omega v(t_m)\pi_h(\int_{\bar I_c}e_{t_m}dq)dx=\sum\limits_{m}\sum\limits_j\int_\omega v(t_m) q_{j,m}\psi_{x_j}dx\nonumber\\
		%	&=&\sum\limits_{(j,m)\in \mathcal{I}_\sigma}\langle \delta_{t_m}\otimes\psi_{x_j}|_{\bar\omega}, v\rangle q_{j,m}=\langle \Lambda_\sigma q,v \rangle.
		%\end{eqnarray} 
		\begin{eqnarray}
			\langle q,\Pi_\sigma v \rangle_{\bar I_c\times \omega}&=&\langle q,\pi_h(\Pi_\tau v)\rangle_{\bar I_c\times \omega}  =\big\langle q,\pi_h\big(\sum_m v(t_m)e_{t_m}\big) \big\rangle_{\bar I_c\times \omega} \nonumber\\%\big\langle q,\sum_m\pi_h (v(t_m))e_{t_m} \big\rangle=
			&=&\Big\langle\sum_m\pi_h (v(t_m)), \int_{I_m}e_{t_m}d q \Big\rangle_{S^*,S}\nonumber\\
			%&=&\Big\langle\sum_m\pi_h (v(t_m)), \pi_h(\int_{I_m}e_{t_m}d q) \Big\rangle\nonumber\\
			&=&\Big\langle\sum_m v(t_m), \pi_h(\int_{I_m}e_{t_m}d q) \Big\rangle_{S^*,S}\nonumber\\
			&=&\Big\langle v, \sum_m\pi_h(\int_{I_m}e_{t_m}d q)\otimes\delta_{t_m} \Big\rangle\nonumber\\
			&=&\langle \Lambda_\sigma q,v \rangle_{\bar I_c\times \omega}
		\end{eqnarray} 
		%\begin{eqnarray}
		%\langle q,v_\sigma \rangle&=&\int_\omega \int_{\bar I_c}\sum\limits_{(j,m)\in \mathcal{I}_\sigma}v_{j,m}e_{t_m}\otimes\psi_{x_j}|_{\bar I_c\times\bar\omega}dqdx\nonumber\\
		%&=&\sum\limits_{(j,m)\in \mathcal{I}_\sigma}v_{j,m}\int_\omega \psi_{x_j}|_{\bar\omega}(\int_{\bar I_c}e_{t_m}dq)dx\nonumber\\
		%&=&\sum\limits_{(j,m)\in \mathcal{I}_\sigma}v_{j,m}\int_\omega \psi_{x_j}|_{\bar\omega}R_h(\int_{\bar I_c}e_{t_m}dq)dx\nonumber\\
		%&=&\sum\limits_{(j,m)\in \mathcal{I}_\sigma}\langle \delta_{t_m}\otimes\psi_{x_j}|_{\bar\omega}, v_\sigma\rangle\pi_h(\int_{\bar I_c}e_{t_m}dq)(x_j)=\langle \Lambda_\sigma q,v_\sigma \rangle
		%\end{eqnarray} 
		for any $(q,v) \in  \mathcal{M}(\bar I_c;S) \times  {C}(\bar I_c;S^*)$.
		Moreover, by using (\ref{inter_9}) and (\ref{inter_5}) we have $\langle q,v_\sigma \rangle_{\bar I_c\times \omega}=\langle q,\Pi_\sigma v_\sigma \rangle_{\bar I_c\times \omega}=\langle \Lambda_\sigma q,v_\sigma \rangle_{\bar I_c\times \omega}$.

		For the second inequality in (\ref{inter_7}), we use the stability of the projection $\pi_h$  to conclude  
		\begin{eqnarray}
			\|\Pi_\sigma v\|_{{C}(\bar I_c;S^*)}&=&\sup\limits_{t\in \bar I_c} \|\pi_h (\Pi_\tau v)(t)\|_{S^*}\leq\max\limits_{1\leq m\leq M_c} \|\pi_hv(t_m)\|_{S^*}\nonumber\\
			&\leq&\max\limits_{1\leq m\leq M_c} \|v(t_m)\|_{S^*}\leq \| v\|_{{C}(\bar I_c;{S^*})}.\nonumber
		\end{eqnarray}
		Next, we prove the first inequality in (\ref{inter_7})
		\begin{eqnarray}
			\|\Lambda_\sigma q\|_{\mathcal{M}(\bar I_c;S)}&=&\sup\limits_{\|v\|_{{C}(\bar I_c;S^*)}\leq 1}\langle\Lambda_\sigma q,v\rangle_{\bar I_c\times \omega} =\sup\limits_{\|v\|_{{C}(\bar I_c;S^*)}\leq 1}\langle q,\Pi_\sigma v\rangle_{\bar I_c\times \omega}\nonumber\\
			&\leq&\sup\limits_{\|v\|_{{C}(\bar I_c;S^*)}\leq 1}\|q\|_{\mathcal{M}(\bar I_c;S)}\|\Pi_\sigma v\|_{{C}(\bar I_c;S^*)}\nonumber\\
			&\leq&\|q\|_{\mathcal{M}(\bar I_c;S)}, \nonumber
		\end{eqnarray}
		where we have used the second inequality in (\ref{inter_7}). % and (\ref{inter_91}).%(\ref{inter_6}) and the stability of the interpolation operator $\Pi_\sigma$ in $\mathcal{M}(\bar I_c;L^2(\omega))$ derived above. 
		This proves the first inequality in (\ref{inter_7}), while (\ref{inter_8}) is obvious. This finishes the proof.
	\end{proof}

	Next, we will introduce the following interpolation and projection operators in time (cf. \cite{DanielsHinzeVierling2015}). Define the $L^2$-projection  $\mathcal{P}_{Y_\tau}:L^2(I;H_0^1(\Omega))\rightarrow Y_\tau$ such that $\mathcal{P}_{Y_\tau} z\in Y_\tau$ satisfies
	\begin{align*}
		\mathcal{P}_{Y_\tau}z|_{I_m}:=\frac{1}{\tau_m}\int_{I_m}zdt\quad\forall z\in L^2(I;H_0^1(\Omega)),\ m=1,\cdots,M,\ \mbox{and}\ \mathcal{P}_{Y_\tau}v(T):=0.\nonumber
	\end{align*}
	In addition, we also need the following interpolation operators. Define the Lagrange interpolation operator ${\Pi}_{P_\tau}:C(\bar I;H_0^1(\Omega))\rightarrow P_\tau$ and the piecewise constant in time interpolation operator ${\Pi}_{Y_\tau}:C(\bar I;H_0^1(\Omega))\rightarrow Y_\tau$ such that ${\Pi}_{P_\tau} v\in P_\tau$,  ${\Pi}_{Y_\tau} v\in Y_\tau$  satisfy
	\begin{align*}
		{\Pi}_{P_\tau}v:=\sum\limits_{m=0}^Mv(t_m)e_{t_m},\quad
		{\Pi}_{Y_\tau}v:=\sum\limits_{m=1}^Mv(t_m)\chi_{I_m},\quad {\Pi}_{Y_\tau}v(T):=0,
	\end{align*}
	where $\{e_{t_m}\}$ and $\{\chi_{I_m}\}$ are the families of node basis functions of $\mathcal P_1$ and $\mathcal P_0$ elements on the time interval $I$, respectively. 
	%	\item Define ${\Pi}_{Y_\tau}:C([0,T];H_0^1(\Omega))\rightarrow Y_\tau$ as follows:
	%	\begin{align}
	%	{\Pi}_{Y_\tau}v|_{I_m}:=v(t_m^*)\quad m=1,\dots,M,\quad \mbox{and}\ {\Pi}_{Y_\tau}v(T):=v(T).\nonumber
	%	\end{align}
	%	\item Define $\pi_{P_k^*}:C([0,T];H_0^1(\Omega))\cup Y_\tau\rightarrow P_\tau^*$ as follows
	%	\begin{align}
	%	\pi_{P_\tau^*}v|_{I_1^*\cup I_2^*}&:=v(t_1^*)+\frac{t-t_1^*}{t_2^*-t_1^*}(v(t_2^*)-v(t_1^*)),\nonumber\\
	%	\pi_{P_\tau^*}v|_{I_m^*}&:=v(t_{m-1}^*)+\frac{t-t_{m-1}^*}{t_m^*-t_{m-1}^*}(v(t_m^*)-v(t_{m-1}^*)),\ m=3,\dots,M-1,\nonumber\\
	%	\pi_{P_\tau^*}v|_{I_M^*\cup I_{M+1}^*}&:=v(t_{M-1}^*)+\frac{t-t_{M-1}^*}{t_M^*-t_{M-1}^*}(v(t_M^*)-v(t_{M-1}^*)).\nonumber
	%	\end{align}
	%\end{enumerate}
	\begin{lemma}\label{Lemma4}
		For arbitrary $ v\in L^2(I;H^1_0(\Omega))\cap H^1(I;L^2(\Omega))$, there hold the following standard interpolation error estimates (cf. \cite{DanielsHinzeVierling2015,Meidner-Vexler-2011}):
		\begin{eqnarray}
			\left\|v-\mathcal{P}_{Y_\tau}v\right\|_{L^2(I;L^2(\Omega))}\leq C\tau\left\|\partial_tv\right\|_{L^2(I;L^2(\Omega))},\\
			\|v-{\Pi}_{P_\tau}v\|_{L^2(I;L^2(\Omega))}\leq C\tau\|\partial_tv\|_{L^2(I;L^2(\Omega))},\label{estimate_12}\\
			\|v-{\Pi}_{P_\tau}v\|_{L^\infty(I;L^2(\Omega))}\leq C\tau^{1\over 2}\|\partial_tv\|_{L^2(I;L^2(\Omega))}.\label{estimate_13}
		\end{eqnarray}
		We also have the following half an order interpolation error estimates for any $v\in H^1(I;L^2(\Omega))\cap L^2(I;H^2(\Omega))$ (cf. \cite[Lemma 3.13]{KunischPieperVexler-2014}) 
		\begin{eqnarray}
			\|v-{\Pi}_{Y_\tau}v\|_{L^2(I;H_0^1(\Omega))}\leq C\tau^{1\over 2}(\|\partial_tv\|_{L^2(I;L^2(\Omega))}+\|\Delta
			v\|_{L^2(I;L^2(\Omega))}),\label{estimate_14}\\
			\|v-{\Pi}_{P_\tau}v\|_{L^2(I;H_0^1(\Omega))}\leq C\tau^{1\over 2}(\|\partial_tv\|_{L^2(I;L^2(\Omega))}+\|\Delta v\|_{L^2(I;L^2(\Omega))}),\label{estimate_15}
		\end{eqnarray}
		where $C>0$ is a constant independent of $\tau$ and $v$.
	\end{lemma}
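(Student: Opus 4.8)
The plan is to prove each estimate interval by interval on $I_m=(t_{m-1},t_m]$ and then sum, using that every operator reproduces the relevant constant or linear structure so the error inherits a power of $\tau_m$ from the available time derivative. First I would record two elementary facts. (i) $\mathcal P_{Y_\tau}v|_{I_m}$ is the temporal average, hence the best constant $L^2(I_m)$-approximation, so the zero-mean Poincar\'e inequality in one dimension (valued in $L^2(\Omega)$) gives $\|v-\mathcal P_{Y_\tau}v\|_{L^2(I_m;L^2(\Omega))}\le C\tau_m\|\partial_t v\|_{L^2(I_m;L^2(\Omega))}$. (ii) For $w:=v-\Pi_{P_\tau}v$ the Lagrange interpolation forces $w(t_{m-1})=w(t_m)=0$, while $\partial_t(\Pi_{P_\tau}v)|_{I_m}=\tau_m^{-1}(v(t_m)-v(t_{m-1}))=\tau_m^{-1}\int_{I_m}\partial_t v$ is exactly the average of $\partial_t v$, so $\partial_t w$ is the orthogonal-projection error of $\partial_t v$ onto constants and therefore $\|\partial_t w\|_{L^2(I_m;L^2(\Omega))}\le\|\partial_t v\|_{L^2(I_m;L^2(\Omega))}$.

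With (i)--(ii) the first three estimates follow quickly. The $\mathcal P_{Y_\tau}$ bound is obtained by summing (i) over $m$ and using $\tau_m\le\tau$. For \eqref{estimate_12} I apply the Poincar\'e--Friedrichs inequality to $w$, which vanishes at both endpoints of $I_m$, giving $\|w\|_{L^2(I_m;L^2(\Omega))}\le C\tau_m\|\partial_t w\|_{L^2(I_m;L^2(\Omega))}\le C\tau_m\|\partial_t v\|_{L^2(I_m;L^2(\Omega))}$, and then sum. For \eqref{estimate_13} I use $w(t_{m-1})=0$ so that $\|w(t)\|_{L^2(\Omega)}\le\int_{I_m}\|\partial_t w\|_{L^2(\Omega)}\le\tau_m^{1/2}\|\partial_t v\|_{L^2(I_m;L^2(\Omega))}$ by Cauchy--Schwarz; taking the supremum in $t$ and the maximum over $m$ produces the $\tau^{1/2}$ rate in $L^\infty(I;L^2(\Omega))$.

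The half-order estimates \eqref{estimate_14}--\eqref{estimate_15} are the delicate part and follow \cite[Lemma 3.13]{KunischPieperVexler-2014}. The mechanism is to measure the error in the stronger spatial norm through the convexity inequality $\|\nabla\psi\|_{L^2(\Omega)}^2\le\|\psi\|_{L^2(\Omega)}\|\Delta\psi\|_{L^2(\Omega)}$, valid for $\psi\in H^2(\Omega)\cap H^1_0(\Omega)$ on the convex domain $\Omega$. Applied pointwise in time to the interpolation error and integrated over $I_m$, this writes the $H^1_0$-error as the geometric mean of an $L^2$-in-space factor, which carries the $O(\tau)$ temporal gain coming from $\partial_t v$, and an $H^2$-in-space factor bounded by $\Delta v$; a Cauchy--Schwarz step in time together with summation over $m$ then yields the factor $\tau^{1/2}$ multiplying $\|\partial_t v\|_{L^2(I;L^2(\Omega))}+\|\Delta v\|_{L^2(I;L^2(\Omega))}$. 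The loss from first to half order is intrinsic here: unlike the $L^2(I;L^2(\Omega))$ estimates, one cannot split off a full power of $\tau_m$ in the $H^1_0$-norm, so the two regularities must be balanced against each other.

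I expect the main obstacle to be exactly this balancing, and more precisely the fact that, given only $\Delta v\in L^2(I;L^2(\Omega))$, the nodal-in-time quantities $\Delta v(t_m)$ implicitly entering $\Pi_{Y_\tau}$ and $\Pi_{P_\tau}$ are not meaningful, so the estimate cannot be reduced to crude pointwise bounds such as $\|\Delta(v(t)-v(t_m))\|\le\|\Delta v(t)\|+\|\Delta v(t_m)\|$. The conceptual resolution underlying \cite[Lemma 3.13]{KunischPieperVexler-2014} is that $H^1(I;L^2(\Omega))\cap L^2(I;H^2(\Omega)\cap H^1_0(\Omega))\hookrightarrow H^{1/2}(I;H^1_0(\Omega))\cap C(\bar I;H^1_0(\Omega))$, so $v$ possesses well-defined $H^1_0(\Omega)$-valued nodal traces and fractional time-regularity of order $1/2$, and nodal interpolation of such a function converges in $L^2(I;H^1_0(\Omega))$ at precisely the rate $\tau^{1/2}$, with the seminorm controlled by $\|\partial_t v\|_{L^2(I;L^2(\Omega))}+\|\Delta v\|_{L^2(I;L^2(\Omega))}$. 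I would make this rigorous either by estimating the fractional time-seminorm directly, or by routing the strong-norm contribution through cell averages $\bar v_m:=\tau_m^{-1}\int_{I_m}v$, whose Laplacian $\Delta\bar v_m=\tau_m^{-1}\int_{I_m}\Delta v$ is a genuine average of an $L^2(I;L^2(\Omega))$ function and hence satisfies $\tau_m\|\Delta\bar v_m\|_{L^2(\Omega)}^2\le\|\Delta v\|_{L^2(I_m;L^2(\Omega))}^2$, so that no evaluation of $\Delta v$ at a time node is ever required.
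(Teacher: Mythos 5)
The paper offers no proof of this lemma at all: the first three estimates are simply quoted from \cite{DanielsHinzeVierling2015,Meidner-Vexler-2011} and the last two from \cite[Lemma 3.13]{KunischPieperVexler-2014}. So your attempt cannot be matched against a "paper proof"; it has to stand on its own. For the first three estimates it does: the identification of $\mathcal{P}_{Y_\tau}v|_{I_m}$ with the best constant approximation plus the zero-mean Poincar\'e inequality, the observation that $\partial_t(\Pi_{P_\tau}v)|_{I_m}$ is the $I_m$-average of $\partial_t v$ (so $\|\partial_t(v-\Pi_{P_\tau}v)\|_{L^2(I_m;L^2(\Omega))}\le\|\partial_t v\|_{L^2(I_m;L^2(\Omega))}$), the Friedrichs inequality on $I_m$, and the fundamental-theorem-of-calculus argument for \eqref{estimate_13} are all correct and complete; note only that nodal values make sense because $H^1(I;L^2(\Omega))\hookrightarrow C(\bar I;L^2(\Omega))$.

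For \eqref{estimate_14}--\eqref{estimate_15} there is a genuine gap. You correctly diagnose that $\Delta v(t_m)$ is meaningless, but your cell-average repair does not, as written, remove the nodal values. Split the error on $I_m$ as $v-v(t_m)=(v-\bar v_m)+(\bar v_m-v(t_m))$. The first piece is fine: $\|\nabla(v(t)-\bar v_m)\|_{L^2(\Omega)}^2=-\big(v(t)-\bar v_m,\Delta v(t)-\Delta\bar v_m\big)$ is a legitimate integration by parts and integrates to $C\tau_m\|\partial_t v\|_{L^2(I_m;L^2(\Omega))}\|\Delta v\|_{L^2(I_m;L^2(\Omega))}$. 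But the second piece is constant in time and equals the $I_m$-average of $\nabla(v-v(t_m))$: bounding it by Jensen reproduces exactly the quantity you are estimating (circular), and integrating by parts against it brings back $\Delta v(t_m)$. The missing ingredient is a trace inequality in the $H^1$-seminorm: from the identity $\frac{d}{dt}\|\nabla\phi(t)\|_{L^2(\Omega)}^2=-2(\partial_t\phi(t),\Delta\phi(t))$, valid on $H^1(I_m;L^2(\Omega))\cap L^2(I_m;H^2(\Omega)\cap H^1_0(\Omega))$ by density, one obtains $\|\nabla\phi(t_m)\|_{L^2(\Omega)}^2\le\tau_m^{-1}\|\nabla\phi\|_{L^2(I_m;L^2(\Omega))}^2+2\|\partial_t\phi\|_{L^2(I_m;L^2(\Omega))}\|\Delta\phi\|_{L^2(I_m;L^2(\Omega))}$. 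Applying this with $\phi:=v-\bar v_m$ (whose Laplacian is $\Delta v-\overline{\Delta v}_m$, never a nodal value) gives $\|\nabla(v(t_m)-\bar v_m)\|_{L^2(\Omega)}^2\le C\|\partial_t v\|_{L^2(I_m;L^2(\Omega))}\|\Delta v\|_{L^2(I_m;L^2(\Omega))}$, which closes \eqref{estimate_14}, and, used at both endpoints of $I_m$, also \eqref{estimate_15}. (An alternative complete proof expands in Laplacian eigenfunctions and treats the frequency ranges $\lambda_j\tau_m\le1$ and $\lambda_j\tau_m>1$ separately, using the one-dimensional trace inequality for the scalar coefficients.) Your other suggested route, through $H^{1/2}(I;H^1_0(\Omega))$, has the same unresolved point in disguise: nodal interpolation is not bounded on $H^{1/2}$ alone, so the claimed $\tau^{1/2}$ rate would itself require a real-interpolation argument between $C(\bar I;H^1_0(\Omega))$ and $H^1(I;H^1_0(\Omega))$ that you do not supply. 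A minor remark: the inequality $\|\nabla\psi\|_{L^2(\Omega)}^2\le\|\psi\|_{L^2(\Omega)}\|\Delta\psi\|_{L^2(\Omega)}$ needs only $\psi\in H^2(\Omega)\cap H^1_0(\Omega)$, not convexity of $\Omega$; convexity is relevant only for full $H^2$ elliptic regularity, which is not used here.
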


	\subsection{Discretization of the optimal control problem}
	With the above preparation the discrete optimal control problem reads:
	\begin{align}\label{min-J-h}
		\min_{q\in \mathcal{M}(\bar I_c;L^2(\omega))\atop{u_\sigma\in Y_\sigma}}J_\sigma(u_\sigma,q)
		:=\frac{1}{2}\|u_\sigma-u_d\|_{L^2(I;L^2(\Omega))}^2
		+\frac{\beta}{2}\|u_\sigma(T)-u_T\|_{L^2(\Omega)}^2
		+\alpha\|q\|_{\mathcal{M}(\bar I_c;L^2(\omega))},
	\end{align}
	where $u_\sigma\in Y_\sigma$ is the discrete state variable satisfying the following discrete state equation:
	\begin{eqnarray}\label{discrete_state}
		A(u_\sigma,v_\sigma)=\int_I(f,v_\sigma)dt+\langle q,v_\sigma\rangle_{\bar I_c\times \omega}+(u_0,v_\sigma(0))\quad\forall v_\sigma\in P_\sigma.\label{state_h}
	\end{eqnarray}
	
	Note that in the above discrete optimal control problem \eqref{min-J-h} the control variable is not explicitly discretized, but in the following we will see that the discretization of the adjoint state indeed automatically yields the discretization of the control variable, which is the so-called variational discretization for optimal control problems proposed in \cite{Hinze-2005}.
	
	Similar to subsection \ref{subsec_1}, we can easily check that the discrete optimal control problem \eqref{min-J-h} exists at least one optimal pair. However, the optimal control is not unique in general, since the discrete control-to-state operator is not injective. In fact, we have $\langle\Lambda_\sigma q,v_{\sigma}\rangle_{\bar I_c\times \omega}=\langle q,v_{\sigma}\rangle_{\bar I_c\times \omega}$ for any $v_{\sigma}\in P_\sigma$, which implies that the cost functional $J_\sigma$ is not strictly convex on $\mathcal{M}(\bar I_c;L^2(\omega))$. Fortunately, we find  that the discrete optimal control problem \eqref{min-J-h} does indeed have a unique  optimal control  in the subspace $U_{\sigma}\subseteq\mathcal{M}(\bar I_c;L^2(\omega))$.

	To derive the discrete  first order optimality condition, we denote the solution of \eqref{state_h} by $u_{\sigma}(q)\in Y_{\sigma}$, then we can obtain the following reduced optimization problem:
	\begin{align}\label{discret_optimal_reduced}
		\min_{q\in \mathcal{M}(\bar I_c;L^2(\omega))}j_{\sigma}(q)
		:=j_{\sigma,1}(q)+j_{\sigma,2}(q),
	\end{align}
	where 
	\begin{align*}
		j_{\sigma, 1}(q):=\frac{1}{2}\|u_\sigma(q)-u_d\|_{L^2(I;L^2(\Omega))}^2
		+\frac{\beta}{2}\|u_\sigma(q)(T)-u_T\|_{L^2(\Omega)}^2,\quad
		j_{\sigma, 2}(q):=\alpha\|q\|_{\mathcal{M}(\bar I_c;L^2(\omega))}.
	\end{align*}
	Obviously, $j_{\sigma,1}:\mathcal M(\bar{I}_c;L^2(\omega))\to\mathbb{R}$ is a quadratic functional of tracking  type that is differentiable, and $j_{\sigma,2}:\mathcal M(\bar{I}_c;L^2(\omega))\to\mathbb{R}$ is convex and subdifferentiable. 
	
	By straightforward calculations, we obtain for any $q,\,p\in \mathcal{M}(\bar I_c;L^2(\omega))$ that
	\begin{align*}
		j^\prime_{\sigma,1}(q)p=\left(u_\sigma(q)-u_d,\tilde{u}_\sigma(p)\right)_{L^2(I;L^2(\Omega))}+\beta\left(u_\sigma(q)(T)-u_T,\tilde{u}_\sigma(p)(T)\right),
	\end{align*}
	where $\tilde{u}_\sigma(p)\in Y_\sigma$ is the finite element approximation of the state equation \eqref{PDE_state} with $f=0,\,u_0=0,\,q=p$, i.e., satisfying equation \eqref{state_h} with the assumed data. On the other hand, we define a discrete adjoint variable as follows: Find $\varphi_\sigma\in P_\sigma$ such that
	\begin{align}\label{adjoint_h}
		A(w_\sigma,\varphi_\sigma)=\int_I(u_\sigma(q)-u_d,w_\sigma)dt+\beta (u_\sigma(q)(\cdot,T)-u_T,w_\sigma(T))\quad\forall w_\sigma\in Y_\sigma.
	\end{align}
	Then, we obtain for any $q,\,p\in \mathcal{M}(\bar I_c;L^2(\omega))$ that
	\begin{align}\label{discretize_directive}
		j^\prime_{\sigma,1}(q)p&=\left(u_\sigma(q)-u_d,\tilde{u}_\sigma(p)\right)_{L^2(I;L^2(\Omega))}+\beta\left(u_\sigma(q)(T)-u_T,\tilde{u}_\sigma(p)(T)\right)\\\nonumber
		&=A(\tilde{u}_\sigma(p),\varphi_\sigma)=\langle p,\varphi_\sigma\rangle_{\bar I_c\times \omega},
	\end{align}
	which implies that $j^\prime_{\sigma,1}(q)=\varphi_\sigma|_{\bar I_c\times \omega}$ with $\varphi_\sigma$ given by \eqref{adjoint_h}.
	
	Now we are in the position to derive the first order  optimality condition for the discrete  optimization problem \eqref{discret_optimal_reduced}.
	\begin{theorem}
		Let $\hat{q}_\sigma\in \mathcal M(\bar{I}_c;L^2(\omega))$ be an optimal control and $\bar{u}_\sigma\in Y_{\sigma}$ be the corresponding optimal state of the optimal control problem (\ref{min-J-h}). Then there exists an adjoint state $\bar{\varphi}_\sigma\in P_\sigma$ solving \eqref{adjoint_h} with $u_{\sigma}(q)$ replaced by $\bar{u}_\sigma$ on the right-hand side, such that the following subgradient condition holds
		\begin{align}\label{disctere_condition_1}
			0\in \bar{\varphi}_{\sigma}|_{\bar{I}_c\times\omega}+\alpha\mathscr{\partial}\|\cdot\|_{\mathcal{M}(\bar{I}_c;L^2(\omega))}(\hat{q}_\sigma)
			\ \ \mbox{\rm in}\ \, (\mathcal{M}(\bar{I}_c;L^2(\omega)))^*,
		\end{align}
		i.e., 
		\begin{align}\label{disctere_condition_2}
			-\langle p-\hat{q}_\sigma,\bar{\varphi}_\sigma\rangle_{\bar I_c\times \omega}+\alpha\|\hat{q}\|_{\mathcal{M}(\bar{I}_c;L^2(\omega))}\le \alpha\|p\|_{\mathcal{M}(\bar{I}_c;L^2(\omega))}\quad\forall p\in \mathcal{M}(\bar{I}_c;L^2(\omega)),
		\end{align} 
		where $\mathscr{\partial}\|\cdot\|_{\mathcal{M}(\bar{I}_c;L^2(\omega))}(\hat{q}_\sigma)$ denotes the set of subgradients for $\|\cdot\|_{\mathcal{M}(\bar{I}_c;L^2(\omega))}$ at $\hat{q}_\sigma$ which is nonempty since  $\|\cdot\|_{\mathcal{M}(\bar{I}_c;L^2(\omega))}$ is convex on $\mathcal{M}(\bar{I}_c;L^2(\omega))$, and $(\mathcal{M}(\bar{I}_c;L^2(\omega)))^*$ denotes the topological dual of $\mathcal{M}(\bar{I}_c;L^2(\omega))$.
		
		Furthermore, from the above condition \eqref{disctere_condition_2} we can easily conclude the following relation between $\hat{q}_{\sigma}$ and $\bar{\varphi}_\sigma$:
		\begin{align}\label{OCP_OPT_h}
			\alpha \|\hat{q}_\sigma\|_{\mathcal{M}(\bar I_c;L^2(\omega))}+\langle \hat{q}_\sigma,\bar{\varphi}_\sigma\rangle_{\bar I_c\times \omega} =0,\\
			\|\bar{\varphi}_\sigma\|_{{C}(\bar I_c;L^2(\omega))}\left\{
			\begin{aligned}\label{adjoint_prop_h_0}
				&=\alpha && \mbox{if}\,\, \hat q_\sigma\neq 0,\\
				&\leq \alpha && \mbox{if}\,\, \hat q_\sigma=0.
			\end{aligned}
			\right.
		\end{align}
		
		In addition, since the discrete  control-to-state mapping has an infinite-dimensional kernel, the discrete optimal control problem (\ref{min-J-h}) admits more than one optimal control $\hat{q}_\sigma\in \mathcal{M}(\bar I_c;L^2(\omega))$ corresponding to the identical optimal state $\bar{u}_\sigma\in Y_{\sigma}$. Among  these optimal controls there exists a unique one $\bar{q}_\sigma\in U_\sigma$, such that for any other  optimal control $ \hat{q}_\sigma\in \mathcal{M}(\bar I_c;L^2(\omega))$ there holds $\bar{q}_\sigma =\Lambda_\sigma  \hat{q}_\sigma$, i.e., $$j_\sigma(\bar{q}_\sigma)=j_\sigma(\hat{q}_\sigma)=\min\limits_{q\in \mathcal{M}(\bar I_c;L^2(\omega))}j_\sigma(q).$$ In other words, $(\bar{q}_{\sigma},\bar{u}_\sigma)\in U_\sigma\times Y_\sigma$ is the unique optimal pair of the following fully discrete optimal control problem:
		\begin{align}\label{min-J_2}
			\min_{q_\sigma\in U_{\sigma}\atop{u_\sigma\in Y_\sigma}}J_\sigma(u_\sigma,q_\sigma)
			:=\frac{1}{2}\|u_\sigma-u_d\|_{L^2(I;L^2(\Omega))}^2
			+\frac{\beta}{2}\|u_\sigma(T)-u_T\|_{L^2(\Omega)}^2
			+\alpha\|q_\sigma\|_{\mathcal{M}(\bar I_c;L^2(\omega))},
		\end{align}
		where $u_{\sigma}\in Y_{\sigma}$ is the discrete state variable satisfying \eqref{discrete_state} with the discrete control $q_\sigma\in U_\sigma$, which can be computed in practice.		
		%Let $(u_\sigma,q_\sigma)$ be the optimal solution of the optimization problem (\ref{min-J-h}) and $\varphi_\sigma$ be the solution of the adjoint equation (\ref{adjoint_h}). Then we have the discrete state equation: Find $u_\sigma\in Y_\sigma$ such that
		%\begin{eqnarray}
		%	A(u_\sigma,v_\sigma)=\int_0^T(f,v_\sigma)dt+\langle q_\sigma,v_\sigma \rangle+(u_0,v_\sigma(\cdot,0))\quad\forall v_\sigma\in P_\sigma^0\label{state_opt_h}
		%\end{eqnarray}
		%and the first order optimality condition
		%\begin{align}
		%	\alpha \|q_\sigma\|_{\mathcal{M}(\bar I_c;L^2(\omega))}+\langle %q_\sigma,\varphi_\sigma \rangle =0,%\label{OCP_OPT_h}\\
		%	\|\varphi_\sigma\|_{\mathcal{C}(\bar I_c;L^2(\omega))}\left\{
		%	\begin{aligned}%\label{adjoint_prop_h}
		%		&=\alpha && \mbox{if}\,\, q_\sigma\neq 0,\\
		%		&\leq \alpha && \mbox{if}\,\, q_\sigma=0.
		%	\end{aligned}
		%	\right.
		%\end{align}
	\end{theorem}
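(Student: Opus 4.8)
The plan is to mirror the structure of the continuous Theorem \ref{Theorem_optimal_system}, replacing every quantity by its discrete counterpart, and then to exploit the algebraic relation between $\Lambda_\sigma$ and the trial space $P_\sigma$ to single out the unique minimizer in $U_\sigma$. First I would establish the subgradient condition. Since $j_\sigma=j_{\sigma,1}+j_{\sigma,2}$ with $j_{\sigma,1}$ Fr\'echet differentiable and $j_{\sigma,2}=\alpha\|\cdot\|_{\mathcal{M}(\bar I_c;L^2(\omega))}$ convex and subdifferentiable, the sum rule for subdifferentials of convex functions yields that $\hat q_\sigma$ is optimal if and only if $0\in j_{\sigma,1}'(\hat q_\sigma)+\alpha\mathscr{\partial}\|\cdot\|_{\mathcal{M}(\bar I_c;L^2(\omega))}(\hat q_\sigma)$. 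Invoking the computation \eqref{discretize_directive}, which identifies $j_{\sigma,1}'(\hat q_\sigma)=\bar\varphi_\sigma|_{\bar I_c\times\omega}$ with $\bar\varphi_\sigma\in P_\sigma$ the solution of the discrete adjoint equation \eqref{adjoint_h} associated with $\bar u_\sigma$, this is exactly \eqref{disctere_condition_1}, and unfolding the definition of the subgradient gives \eqref{disctere_condition_2}.

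The relations \eqref{OCP_OPT_h} and \eqref{adjoint_prop_h_0} then follow verbatim from the arguments in Theorem \ref{Theorem_optimal_system}: testing \eqref{disctere_condition_2} with $p=2\hat q_\sigma$ and $p=0$ produces \eqref{OCP_OPT_h}, while the choice $p=\hat q_\sigma-r$ for arbitrary $r$ together with the triangle inequality yields $\langle r,\bar\varphi_\sigma\rangle_{\bar I_c\times\omega}\le\alpha\|r\|_{\mathcal{M}(\bar I_c;L^2(\omega))}$, hence $\|\bar\varphi_\sigma\|_{C(\bar I_c;L^2(\omega))}\le\alpha$, which combines with \eqref{OCP_OPT_h} to give \eqref{adjoint_prop_h_0}. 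This portion is purely a discrete transcription of the continuous theory and requires no new ideas.

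For the non-uniqueness and the reduction to $U_\sigma$, the key observation is that the discrete state defined by \eqref{state_h} depends on $q$ only through the finitely many numbers $\langle q,v_\sigma\rangle_{\bar I_c\times\omega}$, $v_\sigma\in P_\sigma$; since $P_\sigma$ is finite dimensional while $\mathcal{M}(\bar I_c;L^2(\omega))$ is infinite dimensional, the affine control-to-state map has a kernel of finite codimension, hence infinite dimensional, which accounts for the claimed non-uniqueness. Moreover, because $\langle\Lambda_\sigma q,v_\sigma\rangle_{\bar I_c\times\omega}=\langle q,v_\sigma\rangle_{\bar I_c\times\omega}$ for all $v_\sigma\in P_\sigma$ (a consequence of \eqref{inter_5}--\eqref{inter_9} and the fact that the pairing sees only the restriction $v_\sigma|_{\bar I_c\times\bar\omega}\in V_\sigma$), the state is invariant under $q\mapsto\Lambda_\sigma q$, so $j_{\sigma,1}(\Lambda_\sigma q)=j_{\sigma,1}(q)$; combined with the stability bound $\|\Lambda_\sigma q\|_{\mathcal{M}(\bar I_c;L^2(\omega))}\le\|q\|_{\mathcal{M}(\bar I_c;L^2(\omega))}$ from \eqref{inter_7} this gives $j_\sigma(\Lambda_\sigma q)\le j_\sigma(q)$. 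Thus for any global minimizer $\hat q_\sigma$, the projection $\Lambda_\sigma\hat q_\sigma\in U_\sigma$ is again a global minimizer, so the infimum is attained in $U_\sigma$ and the problems \eqref{min-J-h} and \eqref{min-J_2} share the same optimal value.

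Finally, uniqueness of the minimizer within $U_\sigma$, and hence the identity $\bar q_\sigma=\Lambda_\sigma\hat q_\sigma$, reduces to showing that the restriction of the control-to-state map to $U_\sigma$ is injective, which makes $j_{\sigma,1}$ and therefore $j_\sigma$ strictly convex on $U_\sigma$. I would prove injectivity through the non-degeneracy of the pairing $\langle\cdot,\cdot\rangle_{\bar I_c\times\omega}$ on $U_\sigma\times V_\sigma$: evaluating on the nodal bases gives $\langle\psi_{x_j}|_{\bar\omega}\otimes\delta_{t_m},\,\psi_{x_k}\otimes e_{t_l}\rangle_{\bar I_c\times\omega}=(\psi_{x_j},\psi_{x_k})_{L^2(\omega)}\,\delta_{ml}$, a matrix that is block diagonal in time with spatial blocks equal to the positive definite mass matrices of the $V_h|_\omega$ basis; since every $P_\sigma$ test function restricts onto $V_\sigma$, the state equation forces $\langle q_\sigma^1-q_\sigma^2,w\rangle_{\bar I_c\times\omega}=0$ for all $w\in V_\sigma$, and invertibility of that Gram matrix then forces $q_\sigma^1=q_\sigma^2$ for $q_\sigma^1,q_\sigma^2\in U_\sigma$. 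Consequently $j_\sigma|_{U_\sigma}$ is strictly convex, its minimizer is unique, and since $\Lambda_\sigma\hat q_\sigma$ is a minimizer lying in $U_\sigma$ it must equal this unique $\bar q_\sigma$, establishing the equivalence with \eqref{min-J_2}. I expect this last step, namely verifying the non-degeneracy of the discrete pairing and converting it into strict convexity on $U_\sigma$, to be the main obstacle, as it is the only genuinely discrete ingredient not already supplied by the continuous analysis.
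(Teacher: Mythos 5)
Your proposal is correct and takes essentially the same route as the paper: the subgradient condition and the relations \eqref{OCP_OPT_h}--\eqref{adjoint_prop_h_0} are obtained by transcribing Theorem \ref{Theorem_optimal_system} with the identification $j_{\sigma,1}'(\hat q_\sigma)=\bar\varphi_\sigma|_{\bar I_c\times\omega}$ from \eqref{discretize_directive}, and the reduction to $U_\sigma$ uses $\Lambda_\sigma \hat q_\sigma$ together with \eqref{inter_5} and \eqref{inter_7}, exactly as in the paper's proof. The only difference is that you spell out the injectivity of the discrete control-to-state map on $U_\sigma$ (via the time-block-diagonal Gram matrix with positive definite spatial mass blocks), a step the paper dismisses as ``obvious''; this fills a detail rather than changing the argument.
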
 
	\begin{proof}
		%It is clear that the above optimization problem (\ref{min-J-h}) admits at least one solution $q_\sigma\in \mathcal{M}(\bar I_c;L^2(\omega))$ by following the arguments of Theorem \ref{Thm:existence_OCP}. However, the uniqueness is generally not ensured because the discrete functional $J_\sigma$ is not strictly convex. In fact, the discrete control-to-state mapping is no longer injective from $\mathcal{M}(\bar I_c;L^2(\omega))$ to $Y_\sigma$ because $\langle q,v_\sigma\rangle =\langle \Lambda_\sigma q,v_\sigma\rangle$ holds for any $v_\sigma\in V_\sigma$. 
		Similar to Theorem \ref{Theorem_optimal_system}, we can easily obtain \eqref{disctere_condition_1}-\eqref{adjoint_prop_h_0} by recalling that 
		$j^\prime_{\sigma,1}(\bar{q}_\sigma)=\bar{\varphi}_\sigma|_{\bar{I}_c\times\omega}$ in \eqref{discretize_directive}. 
		
		Next, we state the non-uniqueness of optimal controls and the unique solvability of problem \eqref{min-J_2}. Let $\hat q_\sigma\in \mathcal{M}(\bar I_c;L^2(\omega))$ be any optimal control of problem (\ref{min-J-h}). Setting $\bar q_\sigma:=\Lambda_\sigma \hat q_\sigma$, it follows from (\ref{inter_5}) and (\ref{inter_7}) that $\langle \bar q_\sigma,v_\sigma\rangle_{\bar I_c\times \omega}=\langle \hat q_\sigma,v_\sigma\rangle_{\bar I_c\times \omega}$ for any $v_\sigma\in P_\sigma$ and
		\begin{eqnarray}
			J_\sigma( \bar q_\sigma)\leq J_\sigma( \hat q_\sigma).\nonumber
		\end{eqnarray}
		This means $\bar q_\sigma\in U_\sigma$ is also optimal, and $\bar{q}_\sigma\neq\hat q_\sigma$ unless $\hat q_\sigma\in U_\sigma$. On the other hand, the functional $j_\sigma$ is not strictly convex on $\mathcal{M}(\bar I_c;L^2(\omega))$. Therefore, the discrete optimization problem (\ref{min-J-h}) admits more than one solution in $\mathcal{M}(\bar I_c;L^2(\omega))$. 
		
		Obviously, the control-to-state mapping is injective on $U_\sigma$, and thus the discrete cost functional $J_\sigma$ is strictly convex on $U_\sigma$. Therefore, the optimization problem \eqref{min-J_2} admits a unique optimal pair $(\bar{q}_\sigma,\bar{u}_\sigma)\in U_{\sigma}\times Y_\sigma$. The uniqueness of optimal controls for the cost functional $J_\sigma$ in $U_\sigma$ ensures that any other optimal control $\hat q_\sigma\in \mathcal{M}(\bar I_c;L^2(\omega))$ satisfies $\bar q_\sigma =\Lambda_\sigma \hat q_\sigma$.  This finishes the proof.
	\end{proof}

	\section{Error estimates for the state and adjoint equations}
	In this section we intend to derive a priori error estimates for finite element solutions of the discrete state equation \eqref{discrete_state} and the discrete adjoint equation \eqref{adjoint_h}. Here we assume that the data of the optimal control problem \eqref{min-J}  satisfy the following assumptions.
	\begin{assumption}
		Assume that $f\in L^2(I;H^1(\Omega)),\ u_d\in L^2(I;L^2(\Omega)),\ u_0\in H^1_0(\Omega)$ and $ u_T\in H^1_0(\Omega)$.
	\end{assumption}
	%\section{Error estimates}
	%\subsection{Stability of the discrete equations}
	
	In the following we also need the stability of the semi-discrete in time approximation to the state equation \eqref{PDE_state}. Therefore, we introduce here the semi-discrete state equation for given $f$, $u_0$ and $q$: Find $u_\tau\in Y_\tau$ such that
	\begin{eqnarray}
		A(u_\tau,v_\tau)=\int_I(f,v_\tau)dt+\langle q,v_\tau\rangle_{\bar I_c\times \omega}+(u_0,v_\tau(0))\quad\ \forall v_\tau\in P_\tau.\label{state_k}
	\end{eqnarray}
	Similarly, for any given $g$ and $z_T$, the semi-discrete approximation to problem (\ref{backward_PDE}) reads: Find $z_\tau\in P_\tau$ such that
	\begin{eqnarray}
		A(v_\tau,z_\tau)=\int_I(g,v_\tau)dt+(v_\tau(T),z_T)\quad\ \forall v_\tau\in Y_\tau.\label{backward_PDE_tau}
	\end{eqnarray}
	
	\subsection{Stability estimates for the discrete state and adjoint state}
	We first give a stability result for the semi-discrete approximation of the backward parabolic equation (\ref{backward_PDE}).
	{\begin{lemma}\label{Lm:adjoint_tau_stability}
			Let $z_\tau\in P_\tau$ solve (\ref{backward_PDE_tau}) for given  $g\in L^2(I;H^{-1}(\Omega))$ and $z_T\in L^2(\Omega)$, then there exists a constant $C>0$, independent of $\tau$, such that
			\begin{align}
				&\|z_\tau\|_{L^2(I;L^2(\Omega))}\leq C\sqrt{T}\|z_\tau\|_{C(\bar I;L^2(\Omega))},\nonumber\\
				&\|z_\tau\|_{C(\bar I;L^2(\Omega))}+\|\partial_tz_\tau\|_{L^2(I;H^{-1}(\Omega))}\leq C\|g\|_{L^2(I;H^{-1}(\Omega))}+\|z_T\|_{L^2(\Omega)}.\label{adjoint_k_stability_1}
			\end{align}
			In addition, if $g\in L^2(I;L^2(\Omega))$ and $z_T\in H^1_0(\Omega)$, then there hold
			\begin{align}
				&\|\nabla z_\tau\|_{L^2(I;L^2(\Omega))}\leq C\sqrt{T}\|\nabla z_\tau\|_{C(\bar I;L^2(\Omega))},\nonumber\\
				&\|\nabla z_\tau\|_{C (\bar I;L^2(\Omega))}+\|\partial_tz_\tau\|_{L^2(I;L^2(\Omega))}\leq C\|g\|_{L^2(I;L^2(\Omega))}+\|\nabla z_T\|_{L^2(\Omega)}.\label{adjoint_k_stability_2}
			\end{align}
	\end{lemma}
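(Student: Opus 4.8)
The plan is to establish the four bounds in turn, observing that the two comparison inequalities are elementary while the two energy bounds form the core of the argument. Since every element of $P_\tau$ is continuous and piecewise linear in time, for each $t\in\bar I$ one has $\|z_\tau(t)\|\le\|z_\tau\|_{C(\bar I;L^2(\Omega))}$; integrating in time gives $\|z_\tau\|_{L^2(I;L^2(\Omega))}^2\le T\|z_\tau\|_{C(\bar I;L^2(\Omega))}^2$, and the same reasoning applied to $\nabla z_\tau$ yields the two inequalities involving $\sqrt T$, with $C=1$ independent of $\tau$. It thus remains to prove the second line of \eqref{adjoint_k_stability_1} and of \eqref{adjoint_k_stability_2}.

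Throughout I would write $z_m:=z_\tau(t_m)$ and $\bar z_m:=\tfrac12(z_{m-1}+z_m)=\tfrac1{\tau_m}\int_{I_m}z_\tau\,dt$, and record two facts. First, testing \eqref{backward_PDE_tau} with the element of $Y_\tau$ supported only at the terminal degree of freedom shows $z_\tau(T)=z_T$, so the terminal traces of $z_\tau$ and of the data coincide. Second, the genuine difficulty is that the natural multiplier $z_\tau\in P_\tau$ is \emph{not} an admissible test function, since the scheme only admits $v_\tau\in Y_\tau$, piecewise constant in time; I will therefore test with piecewise-constant surrogates and recover the $C(\bar I;\cdot)$ norms by localising to the subintervals $(t_n,T)$.

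For the $L^2(\Omega)$ bound, fix an index $n$ and test with $v_\tau\in Y_\tau$ given by $v_\tau|_{I_m}=\bar z_m$ for $m>n$, $v_\tau|_{I_m}=0$ for $m\le n$, and terminal value $\bar z_M$. Using $\int_{I_m}(\bar z_m,\partial_t z_\tau)\,dt=\tfrac12(\|z_m\|^2-\|z_{m-1}\|^2)$, the first term of $A$ telescopes to $\tfrac12\|z_n\|^2-\tfrac12\|z_M\|^2$, the gradient term equals $\sum_{m>n}\tau_m\|\nabla\bar z_m\|^2\ge0$, and the terminal contributions cancel because $z_M=z_T$. Estimating $\sum_{m>n}\tau_m(\bar g_m,\bar z_m)$ by duality together with $\sum_m\tau_m\|\bar g_m\|_{H^{-1}(\Omega)}^2\le\|g\|_{L^2(I;H^{-1}(\Omega))}^2$ and absorbing the gradient sum by Young's inequality gives $\|z_n\|^2+\sum_{m>n}\tau_m\|\nabla\bar z_m\|^2\le C(\|g\|_{L^2(I;H^{-1}(\Omega))}^2+\|z_T\|^2)$ uniformly in $n$; taking the maximum over $n$ yields the $C(\bar I;L^2(\Omega))$ bound. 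The bound on $\partial_t z_\tau$ then follows by testing with $\chi_{I_m}w$, $w\in H^1_0(\Omega)$, which gives $\|\partial_t z_\tau|_{I_m}\|_{H^{-1}(\Omega)}\le\|\nabla\bar z_m\|+\|\bar g_m\|_{H^{-1}(\Omega)}$, so that $\|\partial_t z_\tau\|_{L^2(I;H^{-1}(\Omega))}^2=\sum_m\tau_m\|\partial_t z_\tau|_{I_m}\|_{H^{-1}(\Omega)}^2$ is controlled by the already-bounded quantity $\sum_m\tau_m\|\nabla\bar z_m\|^2$ plus $\|g\|_{L^2(I;H^{-1}(\Omega))}^2$.

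For the higher-order estimate the admissible multiplier is $\partial_t z_\tau$ itself, which is piecewise constant in time and $H^1_0(\Omega)$-valued, hence lies in $Y_\tau$. Testing \eqref{backward_PDE_tau} with $v_\tau=\chi_{(t_n,T)}\partial_t z_\tau$ and terminal value $\partial_t z_\tau|_{I_M}$, the first term of $A$ becomes $-\|\partial_t z_\tau\|_{L^2((t_n,T);L^2(\Omega))}^2$, the gradient term telescopes via $\int_{I_m}(\nabla\partial_t z_\tau,\nabla z_\tau)\,dt=\tfrac12(\|\nabla z_m\|^2-\|\nabla z_{m-1}\|^2)$ to $\tfrac12(\|\nabla z_M\|^2-\|\nabla z_n\|^2)$, and the terminal terms again cancel using $z_M=z_T$. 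This produces the identity $\|\partial_t z_\tau\|_{L^2((t_n,T);L^2(\Omega))}^2+\tfrac12\|\nabla z_n\|^2=\tfrac12\|\nabla z_T\|^2-\int_{t_n}^T(g,\partial_t z_\tau)\,dt$, and Young's inequality then gives $\|\nabla z_n\|\le C(\|g\|_{L^2(I;L^2(\Omega))}+\|\nabla z_T\|)$ for every $n$, from which the $C(\bar I;L^2(\Omega))$ gradient bound follows by maximising over $n$ and the $L^2(I;L^2(\Omega))$ bound on $\partial_t z_\tau$ by choosing $n=0$. I expect the only delicate bookkeeping to be the exact cancellation of the terminal traces and the verification of the telescoping and quadrature identities ($\int_{I_m}\nabla z_\tau\,dt=\tau_m\nabla\bar z_m$ and those displayed above); the remaining steps are standard.
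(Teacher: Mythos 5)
Your proposal is correct: all the test functions you use are admissible elements of $Y_\tau$, the terminal identity $z_\tau(T)=z_T$ holds exactly as you argue, and the telescoping, quadrature and absorption steps are valid, so all four bounds follow with constants independent of $\tau$.

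Your route coincides with the paper's for \eqref{adjoint_k_stability_2} but is genuinely different for \eqref{adjoint_k_stability_1}. For the gradient estimate, the paper also tests with $\partial_t z_\tau$ (up to sign) restricted to $(t_{m_0-1},T)$, with terminal value $z_T$ instead of your $\partial_t z_\tau|_{I_M}$ --- an immaterial difference, since the terminal contributions cancel either way --- and arrives at exactly your energy identity. For the $L^2$-level estimate, however, the paper's multiplier is $v_\tau|_{I_m}=-\partial_t(-\Delta)^{-1}z_\tau|_{I_m}$ on $(t_{m_0-1},T)$ with $v_\tau(T)=z_T$: there the time-derivative term of the bilinear form produces the coercive quantity $\|\nabla\partial_t(-\Delta)^{-1}z_\tau\|^2_{L^2(I';L^2(\Omega))}$, which is precisely $\|\partial_t z_\tau\|^2_{L^2(I';H^{-1}(\Omega))}$, while the gradient term telescopes; hence the nodal $L^2$ bound and the bound on $\|\partial_t z_\tau\|_{L^2(I;H^{-1}(\Omega))}$ both drop out of that single test. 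You instead multiply by the local time averages $\bar z_m$ (i.e., by $\mathcal{P}_{Y_\tau}z_\tau$), which swaps the roles of the two terms: now the time-derivative term telescopes and the gradient term is the coercive one, giving the nodal bound together with control of $\sum_m\tau_m\|\nabla\bar z_m\|^2$, and the estimate for $\partial_t z_\tau$ then needs your supplementary localized duality test with $\chi_{I_m}w$, $w\in H^1_0(\Omega)$. Your variant is more elementary in that it never invokes the inverse Laplacian, at the price of one extra step; the paper's choice is more compact because its coercive term is already the norm to be estimated.
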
}
	\begin{proof}
		For any fixed $m_0\in\{1,2,\dots,M\}$, setting $v_\tau|_{I_m}=0$ for $m=1,\cdots,m_0-1$, $v_\tau|_{I_m}=-\partial_t(-\Delta)^{-1}z_\tau|_{I_m}$ for $m=m_0,\cdots,M$ and $v_\tau(\cdot,T)=z_\tau(T)=z_T$ in (\ref{backward_PDE_tau}), it is clear that such $v_\tau\in Y_\tau$ and we have
		\begin{align}
			A(v_\tau,z_\tau)&=\|\nabla\partial_t(-\Delta)^{-1}z_\tau\|_{L^2(I';L^2(\Omega))}^2+{1\over 2}(\| z_\tau(t_{m_0-1})\|_{L^2(\Omega)}^2+\| z_\tau(T)\|_{L^2(\Omega)}^2)\nonumber\\
			&=-\int^T_{t_{m_0-1}}\langle g,\partial_t(-\Delta)^{-1}z_\tau\rangle dt+\|z_T\|^2_{L^2(\Omega)}\nonumber\\
			&\leq C\|g\|^2_{L^2(I';H^{-1}(\Omega))}+{1\over 2}\|\nabla\partial_t(-\Delta)^{-1}z_\tau\|_{L^2(I';L^2(\Omega))}^2+\|z_T\|^2_{L^2(\Omega)},\nonumber
		\end{align}
		where $I':=\cup_{m=m_0}^{M}I_m$, this implies that
		\begin{align}
			\|\nabla\partial_t(-\Delta)^{-1}z_\tau\|_{L^2(I';L^2(\Omega))}^2+\| z_\tau(t_{m_0-1})\|_{L^2(\Omega)}^2
			\leq C\|g\|^2_{L^2(I;H^{-1}(\Omega))}+\| z_T\|_{L^2(\Omega)}^2.\nonumber
		\end{align}
		That is, for any $t_m$ $(m=0,\cdots,M-1)$ we have the stability 
		\begin{eqnarray}
			\| z_\tau(t_m)\|_{L^2(\Omega)}\leq C\|g\|_{L^2(I;H^{-1}(\Omega))}+\| z_T\|_{L^2(\Omega)},\nonumber
		\end{eqnarray}
		i.e., $\|z_\tau\|_{L^\infty(I;L^2(\Omega))}\leq C\|g\|_{L^2(I;H^{-1}(\Omega))}+\| z_T\|_{L^2(\Omega)}$. This finishes the proof of \eqref{adjoint_k_stability_1}.
		
		Next, we prove the estimate \eqref{adjoint_k_stability_2}. Similar to the above procedure, for any fixed $m_0\in\{1,2,\cdots,M\}$, setting $v_\tau|_{I_m}=0$ for $m=1,\cdots,m_0-1$, $v_\tau|_{I_m}=-\partial_tz_{\tau}|_{I_m}$ for $m=m_0,\cdots,M$ and $v_\tau(\cdot,T)=z_\tau(T)=z_T$ in (\ref{backward_PDE_tau}), it is clear that  $v_\tau\in Y_\tau$ and we have
		\begin{align}
			A(v_\tau,z_\tau)&=\| \partial_tz_\tau\|_{L^2(I^\prime;L^2(\Omega))}^2+{1\over 2}(\|\nabla z_\tau(t_{m_0-1})\|_{L^2(\Omega)}^2-\|\nabla z_\tau(T)\|_{L^2(\Omega)}^2)+\| z_\tau(T)\|_{L^2(\Omega)}^2\nonumber\\
			&=-\int^T_{t_{m_0-1}}(g,\partial_tz_\tau)dt+\| z_\tau(T)\|_{L^2(\Omega)}^2\nonumber\\
			&\leq C\|g\|^2_{L^2(I^\prime;L^2(\Omega))}+{1\over 2}\|\partial_tz_\tau\|_{L^2(I^\prime;L^2(\Omega))}^2+\| z_\tau(T)\|_{L^2(\Omega)}^2.\nonumber
		\end{align}
		This implies that
		\begin{align}
			\|\partial_tz_\tau\|_{L^2(I^\prime;L^2(\Omega))}^2+\|\nabla z_\tau(t_{m_0-1})\|_{L^2(\Omega)}^2
			\leq C\|g\|^2_{L^2(I^\prime;L^2(\Omega))}+\| \nabla z_T\|_{L^2(\Omega)}^2,\nonumber
		\end{align}
		then the estimate \eqref{adjoint_k_stability_2} follows. This finishes the proof.
	\end{proof}
	
	Then we will prove the following stability result for the semi-discrete state approximation.
	\begin{lemma}\label{Lm:state_tau_stability}
			Let $u_\tau\in Y_\tau$ solve (\ref{state_k}) for given $f\in L^2(I;L^2(\Omega))$, $u_0\in L^2(\Omega)$ and $q\in \mathcal{M}(\bar I_c;L^2(\omega))$. Then there exists a constant $C>0$, independent of $\tau$, such that
			\begin{align}
				\|\nabla u_\tau\|_{L^2(I;L^2(\Omega))}+\|u_\tau(T)\|_{L^2(\Omega)}\leq C(\|f\|_{L^2(I;L^2(\Omega))}+\|q\|_{\mathcal{M}(\bar I_c;L^2(\omega))}+\|u_0\|_{L^2(\Omega)}).\label{state_k_stability}
			\end{align}
			Moreover, if $f\in L^2(I;H^1(\Omega))$, $u_0\in H^1_0(\Omega)$ and $q\in \mathcal{M}(\bar I_c;H^1(\omega))$, then there holds
			\begin{align}
				\|\Delta u_\tau\|_{L^2(I;L^2(\Omega))}+\|\nabla u_\tau(T)\|_{L^2(\Omega)}\leq C(\|f\|_{L^2(I;H^1(\Omega))}+\|q\|_{\mathcal{M}(\bar I_c;H^1(\omega))}+\|u_0\|_{H^1(\Omega)}),\label{state_k_stability_2}
			\end{align}
			where  $C>0$ is a constant independent of $\tau$.
	\end{lemma}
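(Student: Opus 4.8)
The plan is to derive both estimates from the discrete forward--backward duality carried by the bilinear form $A(\cdot,\cdot)$, combined with the adjoint stability already established in Lemma~\ref{Lm:adjoint_tau_stability}. Since $u_\tau\in Y_\tau$ is an admissible test function in the backward scheme \eqref{backward_PDE_tau} and every $z_\tau\in P_\tau$ is an admissible test function in the forward scheme \eqref{state_k}, evaluating $A(u_\tau,z_\tau)$ in two ways gives, for the solution $z_\tau\in P_\tau$ of \eqref{backward_PDE_tau} with data $(g,z_T)$, the identity
\[
\int_I(g,u_\tau)\,dt+(u_\tau(T),z_T)=\int_I(f,z_\tau)\,dt+\langle q,z_\tau\rangle_{\bar I_c\times\omega}+(u_0,z_\tau(0)).
\]
This identity is the engine for the first estimate, whereas the second one requires an additional energy argument.

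For \eqref{state_k_stability} I would argue purely by duality. Taking $z_T=0$ and letting $g$ range over the unit ball of $L^2(I;H^{-1}(\Omega))$ represents $\|u_\tau\|_{L^2(I;H^1_0(\Omega))}=\|\nabla u_\tau\|_{L^2(I;L^2(\Omega))}$ as a supremum of the left-hand side, while taking $g=0$ and letting $z_T$ range over the unit ball of $L^2(\Omega)$ represents $\|u_\tau(T)\|_{L^2(\Omega)}$. In either case the right-hand side is bounded through $\int_I(f,z_\tau)\,dt\le\|f\|_{L^2(I;L^2(\Omega))}\|z_\tau\|_{L^2(I;L^2(\Omega))}$, $\langle q,z_\tau\rangle_{\bar I_c\times\omega}\le\|q\|_{\mathcal M(\bar I_c;L^2(\omega))}\|z_\tau\|_{C(\bar I_c;L^2(\omega))}$ and $(u_0,z_\tau(0))\le\|u_0\|_{L^2(\Omega)}\|z_\tau\|_{C(\bar I;L^2(\Omega))}$, together with the elementary bounds $\|z_\tau\|_{L^2(I;L^2(\Omega))}\le C\sqrt T\,\|z_\tau\|_{C(\bar I;L^2(\Omega))}$ and $\|z_\tau\|_{C(\bar I_c;L^2(\omega))}\le\|z_\tau\|_{C(\bar I;L^2(\Omega))}$. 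The adjoint stability \eqref{adjoint_k_stability_1} then controls $\|z_\tau\|_{C(\bar I;L^2(\Omega))}$ by $C(\|g\|_{L^2(I;H^{-1}(\Omega))}+\|z_T\|_{L^2(\Omega)})$, and passing to the suprema yields \eqref{state_k_stability}.

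The second estimate \eqref{state_k_stability_2} is genuinely stronger and cannot be reached by pure duality, since it is a discrete parabolic smoothing (maximal regularity) statement one order above the energy norm. Here I would pass to the equivalent one-step (Crank--Nicolson type) formulation obtained from the dual representation \eqref{dual_form} by testing with the nodal functions $e_{t_m}\psi$, first observing that convex-domain elliptic regularity places each time slice of $u_\tau$ in $H^2(\Omega)\cap H^1_0(\Omega)$, so that $\Delta u_\tau$ is meaningful. Testing the $m$-th relation with the discrete Laplacian of the relevant time average and using $(\delta u_m,\Delta u)=-(\nabla\delta u_m,\nabla u)$ to telescope, I expect an identity controlling $\|\Delta u_\tau\|_{L^2(I;L^2(\Omega))}^2+\tfrac12\|\nabla u_\tau(T)\|_{L^2(\Omega)}^2$ by $\tfrac12\|\nabla u_0\|_{L^2(\Omega)}^2$ plus data terms, in which the impulse contribution enters only through the telescoped boundary terms $\|\nabla u_\tau(\cdot)\|_{L^2(\Omega)}^2$ rather than through $\|\delta u_m\|_{L^2(\Omega)}^2$.

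The hard part is controlling the measure-valued data term. A naive pairing bounds it by $\|q\|_{\mathcal M(\bar I_c;L^2(\omega))}$ times an $L^\infty$-in-time quantity of the form $\max_m\|\Delta u_m\|_{L^2(\omega)}$ that the energy does \emph{not} control; indeed the impulse forces a state jump of size $O(1)$, so the discrete time derivative $\delta u_m$ is only $O(\tau^{-1/2})$ in $L^2(I;L^2(\Omega))$ and blows up. The remedy, and precisely the place where the assumption $q\in\mathcal M(\bar I_c;H^1(\omega))$ is used, is to integrate by parts in space so that $q$ is paired through its spatial $H^1(\omega)$-regularity against $\nabla u_\tau$; combined with the $L^\infty$-in-time control of $\|\nabla u_\tau(t)\|_{L^2(\Omega)}$ obtained by summing the telescoped identity from an arbitrary node, and an analogous treatment of $f$ through $\nabla f$ (or by direct absorption into $\|\Delta u_\tau\|^2$), one closes the estimate by absorption and a discrete Gronwall argument, obtaining \eqref{state_k_stability_2} with a constant independent of $\tau$.
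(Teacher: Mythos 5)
Your treatment of the first estimate \eqref{state_k_stability} is essentially the paper's own argument: both proofs evaluate $A(u_\tau,z_\tau)$ once through the forward scheme \eqref{state_k} and once through the backward scheme \eqref{backward_PDE_tau}, and then invoke the adjoint stability \eqref{adjoint_k_stability_1} of Lemma~\ref{Lm:adjoint_tau_stability}. The paper simply inserts the concrete data $g=-\Delta u_\tau$, $z_T=u_\tau(T)$ instead of taking suprema over unit balls; that difference is cosmetic.

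For the second estimate \eqref{state_k_stability_2} there are two problems. First, your premise that this bound ``cannot be reached by pure duality'' is false: the paper proves it precisely by duality, shifted one spatial derivative up. Concretely, the paper first shows, via the Crank--Nicolson form \eqref{Crank_Nicolson} and elliptic regularity, that the slices $u_{\tau,m}$ lie in $H^2(\Omega)\cap H^1_0(\Omega)$; it then rewrites both semi-discrete schemes in the strong forms \eqref{A_Scheme_1}--\eqref{A_Scheme_2}, in which no spatial derivatives fall on test functions, so that $-\Delta u_\tau$ and $-\Delta z_\tau$ become admissible test functions. Testing the backward scheme with $-\Delta u_\tau$, using the symmetry $A(-\Delta u_\tau,z_\tau)=A(u_\tau,-\Delta z_\tau)$, invoking the forward scheme, and integrating by parts in space to move one derivative onto $f$, $q$ and $u_0$ (this is exactly where $f\in L^2(I;H^1(\Omega))$, $q\in\mathcal{M}(\bar I_c;H^1(\omega))$ and $u_0\in H^1_0(\Omega)$ enter), the paper closes the estimate with the $H^1$-level adjoint stability \eqref{adjoint_k_stability_2} and a density argument. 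So the higher-order bound follows from the same duality mechanism as the first one, only at a shifted regularity level; no maximal-regularity-type energy argument is needed.

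Second, the energy/telescoping argument you propose instead has a genuine gap at its central step. For this Crank--Nicolson-type scheme, testing the $m$-th relation of \eqref{Crank_Nicolson} with the Laplacian of the weighted time average produces a dissipation term that controls sums of $\|\Delta(\tau_{m+1}u_{\tau,m+1}+\tau_m u_{\tau,m})\|_{L^2(\Omega)}^2$, i.e.\ the Laplacian of consecutive \emph{averages}, and not the quantity $\|\Delta u_\tau\|_{L^2(I;L^2(\Omega))}^2=\sum_m\tau_m\|\Delta u_{\tau,m}\|_{L^2(\Omega)}^2$ that the lemma asserts. Passing from averages to individual slices requires, in addition, a bound on the oscillatory component $\sum_m\tau_m\|\Delta(u_{\tau,m+1}-u_{\tau,m})\|_{L^2(\Omega)}^2$, which your identity does not deliver and which is exactly where the measure data hurts: across an impulse node the jump $u_{\tau,m+1}-u_{\tau,m}$ is, to leading order, the zero-extended impulse profile itself, whose Laplacian is not controlled by the data norms, so this term can be neither absorbed nor handled by a discrete Gronwall inequality. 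Consequently your sketch, even granting its other steps, yields the $\|\nabla u_\tau(T)\|_{L^2(\Omega)}$ part of \eqref{state_k_stability_2} but not the $\|\Delta u_\tau\|_{L^2(I;L^2(\Omega))}$ part, which the paper's duality route obtains without ever passing through the Crank--Nicolson energy identity. Two further, smaller points: the clean telescoping of $(\nabla(u_{\tau,m+1}-u_{\tau,m}),\tau_{m+1}\nabla u_{\tau,m+1}+\tau_m\nabla u_{\tau,m})$ implicitly assumes uniform time steps; and your spatial integration by parts of the measure pairing over $\omega$ generates a boundary term $\int_{\partial\omega}q'\,\partial_n u_\tau\,ds$ that you do not control --- though, to be fair, the analogous term is also left unaddressed in the paper's own estimate of $\langle q,-\Delta z_\tau\rangle_{\bar I_c\times\omega}$.
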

	\begin{proof}
		%	At first we introduce the $L^2$-projection operator $Q_\tau:L^2(0,T;H_0^1(\Omega))\rightarrow P_\sigma$ such that
		%	\begin{eqnarray}
		%	\int_0^T(v-Q_\tau v,w_\sigma)dt=0\quad\forall w_\sigma\in P_\sigma.
		%	\end{eqnarray}	
		%	Since the problem is linear, it suffices to consider the case either $f=0$, $u_0=0$ or $q=0$. 
		%We prove this lemma by using the similar idea as in the proof of Theorem \ref{Thm:existence_state}. Let $\{q^n\}_n\subset C(\bar I_c\times \bar\omega)$ be the sequence converging weakly-* to $q$ in $\mathcal{M}(\bar I_c;L^2(\omega))$ and satisfy
		%\begin{eqnarray}
		%	\|q^n\|_{L^1(I_c;L^2(\omega))}\leq \|q\|_{\mathcal{M}(\bar I_c;L^2(\omega))}.
		%\end{eqnarray}
		%Let $u^n$ be the solution to the following parabolic equation
		%\begin{align}\label{forward_PDE}
		%	\left\{
		%	\begin{aligned}
		%		\partial_t u^n - \Delta u^n & = f+q^n
		%		&&\mbox{in}\,\,\,\Omega\times(0,T],\\
		%		u^n&=0 && \mbox{on}\,\, \partial\Omega\times (0,T],\\
		%		u^n|_{t=0}&=u_0 && \mbox{in}\,\,\Omega .
		%	\end{aligned}
		%	\right.
		%\end{align}
		%We denote by $u^n_\tau\in Y_\tau\subset \mathcal{P}_0(I;H_0^1(\Omega))$ the semidiscrete approximation to the above equation given as:
		%\begin{eqnarray}
		%	A(u^n_\tau,v_\tau)=\int_0^T(f,v_\tau)dt+\langle q^n,v_\tau\rangle+(u_0,v_\tau(0))\quad\forall v_\tau\in P_\tau^0.
		%\end{eqnarray} 
		We first prove the estimate \eqref{state_k_stability}. Recall that $\|g\|^2_{L^2(I;H^{-1}(\Omega))}=\|\nabla u_\tau\|^2_{L^2(I;L^2(\Omega))}=
		\langle -\Delta u_\tau,u_\tau\rangle_{L^2(I;H^{-1},H^1)}$
		with $g=-\Delta u_\tau$, and $\|h\|_{L^2(\Omega)}=\| u_\tau(T)\|_{L^2(\Omega)}$ with $h=u_\tau(T)$. We denote by $z_\tau\in P_\tau$ the semi-discrete approximation defined in (\ref{backward_PDE_tau}) with $g=-\Delta u_\tau$ and $z_T= u_\tau(T)$, and test (\ref{backward_PDE_tau}) with $u_\tau$, then
		\begin{eqnarray}
			&&\int_I\langle g,u_\tau\rangle dt+(z_T,u_\tau(T))
			=A(u_\tau,z_\tau)\nonumber\\
			&=&\int_I(f,z_\tau)dt+\langle q,z_\tau\rangle_{\bar I_c\times \omega}+(u_0,z_\tau(0))\nonumber\\
			&\leq&\|f\|_{L^2(I;L^2(\Omega))}\|z_\tau\|_{L^2(I;L^2(\Omega))}+\|u_0\|_{L^2(\Omega)}\|z_\tau(0)\|_{L^2(\Omega)}+\|q\|_{\mathcal M(\bar I_c;L^2(\omega))}\|z_\tau\|_{{C}(\bar I_c;L^2(\omega))}\nonumber\\
			&\leq&C(\|f\|_{L^2(I;L^2(\Omega))}+\|u_0\|_{L^2(\Omega)}+\|q\|_{\mathcal{M}(\bar I_c;L^2(\omega))})\|z_\tau\|_{{C}(\bar I;L^2(\Omega))}\nonumber\\
			&\leq&C(\|f\|_{L^2(I;L^2(\Omega))}+\|u_0\|_{L^2(\Omega)}+\|q\|_{\mathcal{M}(\bar I_c;L^2(\omega))})(\|g\|_{L^2(I;H^{-1}(\Omega))}+\|z_T\|_{L^2(\Omega)}),\nonumber
		\end{eqnarray}
		where we used the scheme (\ref{state_k}) and
		the estimate (\ref{adjoint_k_stability_1})  in Lemma \ref{Lm:adjoint_tau_stability}. Therefore, we obtain the estimate \eqref{state_k_stability}.
		
		Now, we prepare to show \eqref{state_k_stability_2}. First, we denote by $u_{\tau,m}:=u_\tau|_{I_m}$, $m=1,2,\cdots,M$, and $u_{\tau,M+1}:=u_\tau(T)$. Thus there holds (cf. \cite{DanielsHinzeVierling2015})
		\begin{equation}\label{Crank_Nicolson}
			\begin{split}
				&(u_{\tau,1}-u_0,v)+{1\over2}\left(\tau_1\nabla u_{1},\nabla v\right)
				=( \tilde f_0,v)_{L^2(I_1;L^2(\Omega))},\\
				&(u_{\tau,m+1}-u_{\tau,m},v)+{1\over2}\left(\tau_{m+1}\nabla u_{\tau,m+1}+\tau_m\nabla u_{\tau,m},\nabla v\right)
				=(\tilde f_m,v),\\
				&(u_{\tau,M+1}-u_{\tau,M},v)+{1\over2}\left(\tau_{M}\nabla u_{M},\nabla v\right)
				=( \tilde f_M,v)
			\end{split}
		\end{equation}
		for arbitrary $ v\in H_0^1(\Omega)$, where 
		\begin{eqnarray}
			\tilde f_0=(f,e_{t_1})_{L^2(I_1)},\quad  \tilde f_M=(f,e_{t_M})_{L^2(I_M)},\nonumber\\
			\tilde f_m=(f,e_{t_m})_{L^2(I_m\cup I_{m+1})}
			+\int_{(I_m\cup I_{m+1})\cap \bar I_c}e_{t_m}dq(t)\in L^2(\Omega)\quad  m=1,2,\cdots,M-1.\nonumber
		\end{eqnarray}
		Since $f\in L^2(I;H^1(\Omega))$ and $q\in\mathcal{M}(\bar I_c;H^1(\omega))$, we have $\tilde f_m\in H^1(\Omega)$ for $m=0,1,\cdots,M$. Then from the first two expressions in (\ref{Crank_Nicolson})  it follows that $u_{\tau,m}\in H^2(\Omega)\cap H^1_0(\Omega)$ ($m=1,2,\cdots,M$) by the regularity of elliptic equations.
		%or  even $H^{2+s}(\Omega)\cap H^1_0(\Omega)$ for $\omega=\Omega$ and $s\in (0,2)$ depending on the maximum interior angle of $\Omega$,  by the regularity of elliptic equations. Since $I_c$ is relatively compact, one can ensure that $(I_{M-1}\cup I_{M})\cap \bar I_c=\emptyset$ when $\tau$ is sufficiently small. Therefore,  one obtains $\Delta u_{\tau,M}=\frac{1}{\tau_M}(2u_{\tau,M}-2u_{\tau,M-1}-2\tilde f_M-\tau_{M-1}\Delta u_{\tau,M-1})\in L^2(\Omega)$.
		By the last expression, there has $u_{\tau,M+1}=\tilde f_M+u_{\tau,M}+{1\over 2}\tau_M\Delta u_{\tau,M}$, which implies that $u_{\tau, M+1}\in L^{2}(\Omega)$. Therefore, we have $-\Delta u_{\tau,m}\in L^2(\Omega)$, $1\le m\le M$ and $-\Delta u_{\tau,M+1}\in H^{-2}(\Omega)$ in the sense of distributions. Using the similar idea we can show that  for any $g\in L^2(I;L^2(\Omega))$ and  $z_T\in H^1_0(\Omega)$, the semi-discrete solution of  (\ref{backward_PDE_tau}) satisfies $z_\tau\in L^2(I;H^2(\Omega))\cap C(\bar I;H^1(\Omega))$ and $z_\tau (0)\in H^1(\Omega)$.
		
		With the above spatial regularity of semi-discrete solutions to (\ref{backward_PDE_tau}) and (\ref{state_k}) we can rewrite the semi-discrete schemes (\ref{backward_PDE_tau}) and (\ref{state_k}) as:
		Find $u_\tau\in Y_\tau$ such that for any $ v_\tau\in P_\tau$
		\begin{equation}\label{A_Scheme_1}
			\int_I-(u_\tau,\partial_tv_\tau)-(\Delta u_\tau,v_\tau)dt+(u_\tau(T),v_\tau(T))=\int_I(f,v_\tau)dt+\langle q,v_\tau\rangle_{\bar I_c\times \omega}+(u_0,v_\tau(0)),	
		\end{equation}
		and find $z_\tau\in P_\tau$ such that for any $w_\tau\in Y_\tau$
		\begin{equation}\label{A_Scheme_2}
			\int_I-(w_\tau,\partial_tz_\tau)+( w_\tau,-\Delta z_\tau)dt+(w_\tau(T),z_\tau(T))=\int_I(g,w_\tau)dt+(z_T,w_\tau(T)).
		\end{equation}
		Since there are no spatial derivatives for the test functions in schemes \eqref{A_Scheme_1} and \eqref{A_Scheme_2}, the formulations \eqref{A_Scheme_1} and \eqref{A_Scheme_2} hold not only for all $v_\tau\in P_\tau$ and $w_\tau\in Y_\tau$,  but also hold, by the dense of $H^1_0(\Omega)$ in $L^2(\Omega)$, for all $v_\tau\in \tilde P_\tau$ and $w_\tau\in \tilde Y_\tau$, respectively, where 
		\begin{eqnarray}
			\tilde P_\tau:=\{v_\tau\in C(\bar I;L^2(\Omega)): v_\tau|_{I_m}\in \mathcal{P}_1(I_m;L^2(\Omega)),\ m=1,2,\cdots,M\},\nonumber\\
			\tilde Y_\tau:=\{v_\tau\in L^2( I;L^2(\Omega)): v_\tau|_{I_m}\in \mathcal{P}_0(I_m;L^2(\Omega)),m=1,2,\cdots,M,\ v_\tau(T)\in H^{-1}(\Omega)\}.\nonumber
		\end{eqnarray}
		
		We denote by $z_\tau$ the semi-discrete approximation to the backward equation (\ref{backward_PDE}) defined by (\ref{backward_PDE_tau}), or equivalently, \eqref{A_Scheme_2},  for arbitrary $g\in L^2(I;L^2(\Omega))$ and $z_T\in C_0^\infty(\Omega)$. Similarly, we test \eqref{A_Scheme_2} with $-\Delta u_\tau$, then %\textcolor{red}{does $\Delta u_\tau\in L^2(\Omega)$ ?}
		\vspace{-0.2cm}
		\begin{eqnarray}
			&&\int_I(g,-\Delta u_\tau)dt+\langle z_T,-\Delta u_\tau(T)\rangle
			=A(-\Delta u_\tau,z_\tau)\nonumber\\
			&=&A(u_\tau,-\Delta z_\tau)\nonumber\\
			&=&\int_I\langle f,-\Delta z_\tau\rangle dt+\langle q,-\Delta z_\tau\rangle_{\bar I_c\times \omega}+\langle u_0,-\Delta z_\tau(0)\rangle \nonumber\\
			&\leq&\|f\|_{L^2(I;H^1(\Omega))}\|\nabla z_\tau\|_{L^2(I;L^2(\Omega))}+\|\nabla u_0\|_{L^2(\Omega)}\|\nabla z_\tau(0)\|_{L^2(\Omega)}\nonumber\\
			&&+C\|q\|_{\mathcal{M}(\bar{I}_c;H^1(\omega))}\|\nabla z_\tau\|_{{C}(\bar I_c;L^2(\Omega))}\nonumber\\
			&\leq&C(\|f\|_{L^2(I;H^1(\Omega))}+\|u_0\|_{H^1(\Omega)}+\|q\|_{\mathcal{M}(\bar I_c;H^1(\omega))})\|\nabla z_\tau\|_{{C}(\bar I;L^2(\Omega))}\nonumber\\
			&\leq&C(\|f\|_{L^2(I;H^1(\Omega))}+\|u_0\|_{H^1(\Omega)}+\|q\|_{\mathcal{M}(\bar I_c;H^1(\omega))})(\|g\|_{L^2(I;L^2(\Omega))}+\|\nabla z_T\|_{L^2(\Omega)}),\nonumber
			%&=&C(\|f\|_{L^2(I;H^1(\Omega))}+\|u_0\|_{H^1(\Omega)}+\|q\|_{\mathcal{M}(\bar I_c;H^1(\omega))})(\|g\|_{L^2(I;L^2(\Omega))}+\|\nabla z_T\|_{L^2(\Omega)}),\nonumber\\
		\end{eqnarray}
		where we have used the scheme \eqref{A_Scheme_1} and the estimate \eqref{adjoint_k_stability_2} in Lemma \ref{Lm:adjoint_tau_stability}. %In the above derivation we used the fact that $\langle q,-\Delta z_\tau\rangle_{\bar I_c\times \omega}=\langle \chi_\omega\tilde q,-\Delta z_\tau\rangle_{\bar I_c\times \Omega}\leq \|\|$ where $\tilde q$ is the Sobolev extension $q$ to  in $\Omega$. 
By the density of $C_0^\infty(\Omega)$ in $H_0^1(\Omega)$ we obtain
		\begin{eqnarray}
		\|\Delta u_\tau\|_{L^2(I;L^2(\Omega))}+\|\Delta u_\tau(T)\|_{H^{-1}(\Omega)}\le C(\|f\|_{L^2(I;H^1(\Omega))}+\|u_0\|_{H^1(\Omega)}+\|q\|_{\mathcal{M}(\bar I_c;H^1(\omega))}),\nonumber
		\end{eqnarray}
		which confirms the estimate \eqref{state_k_stability_2} by using the fact $\|\Delta u_\tau(T)\|_{H^{-1}(\Omega)}\approx \|\nabla u_\tau(T)\|_{L^2(\Omega)}$. This finishes the proof.
		%\begin{eqnarray}
		%	\|z_\tau\|_{\mathcal{C}(\bar I;L^2(\Omega))}\leq C\|g\|_{L^2(I;H^{-1}(\Omega))}.
		%\end{eqnarray}
		%Therefore, for each fixed $\tau$ the sequence $\{u^n_\tau\}_n$ is uniformly bounded in $\mathcal{P}_0(I;H_0^1(\Omega))$ by setting $g:=\psi_0-\frac{\partial \psi_j}{\partial x_j}$, $\psi_j\in \mathcal{D}(I\times \Omega)$, $j=1,\dots,d$ and using the density of $\{\psi_0-\frac{\partial \psi_j}{\partial x_j}\}$ in $L^2(I;H^{-1}(\Omega))$ ( cf. \cite{CasasClasonKunisch-2013,CasasKunisch-2016}). Thus, we can extract a subsequence, still denoted by $\{u_\tau^n \}_n$, such that $u_\tau^n\rightarrow \tilde u$ weakly in $\mathcal{P}_0(I;H_0^1(\Omega))$ satisfying 
		%\begin{align}
		%	\|\nabla \tilde u\|_{L^2(I;L^2(\Omega))}\leq C(\|f\|_{L^2(I;L^2(\Omega))}+\|q\|_{\mathcal{M}(\bar I_c;L^2(\omega))}+\|u_0\|_{L^2(\Omega)}).\nonumber
		%\end{align}
		%It remains to verify that $\tilde u$ satisfies (\ref{state_k}). Let $w_\tau\in P_\tau^0$, multiplying (\ref{forward_PDE}) by $w_\tau$ and integrating by parts give
		%\begin{eqnarray}
		%	\int_0^T(u_\tau^n,-\partial_t w_\tau)+(\nabla u_\tau^n,\nabla  w_\tau)dt=\int_0^T(f,w_\tau)dt+\langle q^n,w_\tau\rangle+(u_0,w_\tau(\cdot,0)),\nonumber
		%\end{eqnarray} 
		%since $P_\tau\subset H^1(I;L^2(\Omega))\cap L^2(I;H_0^1(\Omega))\cap \mathcal{C}(\bar I;L^2(\Omega))$, passing to the limit on both sides with respect to $n$ implies that $\tilde u$ satisfies (\ref{state_k}). Thus, $u_\tau =\tilde u$ because problem (\ref{state_k}) admits a unique solution in $\mathcal{P}_0(I;H_0^1(\Omega))$. This finishes the proof of the lemma.
	\end{proof}	
	
	Furthermore, the fully discrete finite element approximation of the backward parabolic equation (\ref{backward_PDE}) can be defined as: Find $z_\sigma\in P_\sigma$ such that
	\begin{eqnarray}
		A(v_\sigma,z_\sigma)=\int_I(g,v_\sigma)dt+(v_\sigma(T),z_T)\quad\ \forall v_\sigma\in Y_\sigma.\label{backward_PDE_sigma}
	\end{eqnarray}
	%	We define the discrete Laplacian $-\Delta_h:V_h\rightarrow V_h$ such that
	%	\begin{eqnarray}
	%	(-\Delta_hu_h,v_h):=(\nabla u_h,\nabla v_h)\quad\forall v_h\in V_h.\nonumber
	%	\end{eqnarray}
	
	Similar to Lemma \ref{Lm:adjoint_tau_stability} we have the following stability result on the fully discrete approximation of backward parabolic equations.
	\begin{lemma}\label{Lm:adjoint_stability}
		Let $z_\sigma\in P_\sigma$ solve (\ref{backward_PDE_sigma}) for given $g\in L^2(I;H^{-1}(\Omega))$ and $z_T\in {L^2(\Omega)}$. Then there exists a constant $C>0$, independent of $\sigma$, such that
		\begin{align}
			\left\|\nabla\partial_t(-\Delta_h)^{-1}z_\sigma\right\|_{L^2(I;L^2(\Omega))}+\| z_\sigma\|_{C(\bar{I};L^2(\Omega))}\leq C\left(\|g\|_{L^2(I;H^{-1}(\Omega))}+\| z_T\|_{L^2(\Omega)}\right),\label{adjoint_h_stability}
			%\|\mathcal{P}_{Y_\tau}\Delta \varphi_\sigma\|_{L^2(I;L^2(\Omega))}
		\end{align}	
	where $-\Delta_h:V_h\rightarrow V_h$ is defined as $(-\Delta_hv_h,w_h):=(\nabla v_h,\nabla w_h)$ for any $w_h\in V_h$.
	\end{lemma}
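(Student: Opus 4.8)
The plan is to mirror the proof of Lemma \ref{Lm:adjoint_tau_stability}, replacing the continuous inverse Laplacian $(-\Delta)^{-1}$ by its discrete analogue $(-\Delta_h)^{-1}$, which is well defined, self-adjoint and positive definite on $V_h$. The starting observation is that if $z_\sigma\in P_\sigma$ then $(-\Delta_h)^{-1}z_\sigma$ is again piecewise linear in time and $V_h$-valued, so $\partial_t(-\Delta_h)^{-1}z_\sigma$ is piecewise constant in time and $V_h$-valued; hence the test function chosen below genuinely lies in $Y_\sigma$.

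First I would fix $m_0\in\{1,\dots,M\}$, set $I':=\cup_{m=m_0}^M I_m$, and choose $v_\sigma\in Y_\sigma$ by $v_\sigma|_{I_m}=0$ for $m<m_0$, $v_\sigma|_{I_m}=-\partial_t(-\Delta_h)^{-1}z_\sigma|_{I_m}$ for $m\ge m_0$, together with $v_\sigma(T)=z_\sigma(T)$. Writing $\phi:=(-\Delta_h)^{-1}z_\sigma$ and using $\partial_t z_\sigma=-\Delta_h\partial_t\phi$ with the defining relation $(\nabla a,\nabla b)=(-\Delta_h a,b)$ for $a,b\in V_h$, the two interior contributions to $A(v_\sigma,z_\sigma)$ collapse to the clean identities $(\partial_t\phi,\partial_t z_\sigma)=\|\nabla\partial_t\phi\|^2$ and $(\nabla\partial_t\phi,\nabla z_\sigma)=(\partial_t z_\sigma,z_\sigma)$. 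Carrying these through the definition of $A$ gives
\begin{equation*}
A(v_\sigma,z_\sigma)=\|\nabla\partial_t(-\Delta_h)^{-1}z_\sigma\|_{L^2(I';L^2(\Omega))}^2+\tfrac12\|z_\sigma(t_{m_0-1})\|_{L^2(\Omega)}^2+\tfrac12\|z_\sigma(T)\|_{L^2(\Omega)}^2,
\end{equation*}
which is the exact discrete counterpart of the expression in Lemma \ref{Lm:adjoint_tau_stability}.

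Next I would evaluate the right-hand side of \eqref{backward_PDE_sigma} for this $v_\sigma$. Testing \eqref{backward_PDE_sigma} separately with functions vanishing on every $I_m$ but carrying an arbitrary terminal value in $V_h$ shows $z_\sigma(T)=P_h z_T$, the $L^2(\Omega)$-orthogonal projection of $z_T$ onto $V_h$; consequently $(v_\sigma(T),z_T)=(z_\sigma(T),z_T)=\|z_\sigma(T)\|^2$ and, crucially, $\|z_\sigma(T)\|\le\|z_T\|$. The source term is bounded by $\|g\|_{L^2(I';H^{-1}(\Omega))}\|\nabla\partial_t(-\Delta_h)^{-1}z_\sigma\|_{L^2(I';L^2(\Omega))}$, and a Young inequality absorbs half of the gradient term into the left-hand side. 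After rearranging and using $\|z_\sigma(T)\|\le\|z_T\|$ this yields, for every $m_0$,
\begin{equation*}
\|\nabla\partial_t(-\Delta_h)^{-1}z_\sigma\|_{L^2(I';L^2(\Omega))}^2+\|z_\sigma(t_{m_0-1})\|_{L^2(\Omega)}^2\le C\big(\|g\|_{L^2(I;H^{-1}(\Omega))}^2+\|z_T\|_{L^2(\Omega)}^2\big).
\end{equation*}
Taking $m_0=1$ controls the full gradient term on $I$, while letting $m_0$ range over $1,\dots,M$ bounds $\|z_\sigma(t_m)\|$ at every node; since $t\mapsto\|z_\sigma(t)\|$ is convex on each $I_m$ (the norm of a time-affine function), its supremum over $\bar I$ is attained at a node, so $\|z_\sigma\|_{C(\bar I;L^2(\Omega))}=\max_m\|z_\sigma(t_m)\|$ is controlled as well. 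Combining the two bounds gives \eqref{adjoint_h_stability}.

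I expect the only genuine obstacle to be the bookkeeping around the terminal term: one must verify that $z_\sigma(T)$ is exactly the projection $P_h z_T$, so that the negatively signed $\tfrac12\|z_\sigma(T)\|^2$ produced on the left is dominated by $\|z_T\|^2$ on the right after the Young absorption. A secondary point to check is that $\partial_t(-\Delta_h)^{-1}z_\sigma$ genuinely reproduces the discrete space $Y_\sigma$, which follows from $(-\Delta_h)^{-1}$ mapping $V_h$ into $V_h$; everything else is the standard Young-inequality absorption plus the piecewise-linear nodal-maximum argument.
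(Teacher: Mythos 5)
Your proposal is correct and follows essentially the same argument as the paper: the same test function $v_\sigma|_{I_m}=-\partial_t(-\Delta_h)^{-1}z_\sigma|_{I_m}$ (zero for $m<m_0$) with $v_\sigma(T)=z_\sigma(T)$, the same energy identity with the $+\tfrac12\|z_\sigma(T)\|^2$ term, and the same Young-inequality absorption. The only (harmless) deviation is your handling of the terminal term via the identification $z_\sigma(T)=P_hz_T$ and $L^2$-projection stability; the paper instead bounds $(z_T,z_\sigma(T))\le C\|z_T\|^2+\tfrac12\|z_\sigma(T)\|^2$ and cancels the half-norm against the left-hand side, which achieves the same thing without needing that identification.
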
	
	\begin{proof}
		Setting  $v_\sigma\in Y_\sigma$ such that  $v_\sigma|_{I_m}=-\partial_t(-\Delta_h)^{-1}z_\sigma|_{I_m}\in Y_\sigma$ ($m=m_0,\cdots,M$), $v_\sigma|_{I_m}=0\ (m=1,2,\cdots,m_0-1)$  and $v_\sigma(T)=z_\sigma(T)$ in (\ref{backward_PDE_sigma}) for arbitrary $1\le m_0\le M$, there holds
		\begin{align}
			A(v_\sigma,z_\sigma)&=\|\nabla\partial_t(-\Delta_h)^{-1}z_\sigma\|_{L^2(I^\prime;L^2(\Omega))}^2+{1\over 2}(\| z_\sigma(t_{m_0-1})\|_{L^2(\Omega)}^2+\| z_\sigma(T)\|_{L^2(\Omega)}^2)\nonumber\\
			&=\int_{t_{m_0-1}}^T\langle g,-\partial_t(-\Delta_h)^{-1}z_\sigma\rangle dt+(z_T,z_\sigma(T))\nonumber\\
			&\leq C\|g\|^2_{L^2(I^\prime;H^{-1}(\Omega))}+{1\over 2}\|\nabla\partial_t(-\Delta_h)^{-1}z_\sigma\|_{L^2(I^\prime;L^2(\Omega))}^2+C\|z_T\|^2_{L^2(\Omega)}+{1\over 2}\|z_\sigma(T)\|^2_{L^2(\Omega)},\nonumber
		\end{align}
		which implies that
		\begin{align}
			\|\nabla\partial_t(-\Delta_h)^{-1}z_\sigma\|_{L^2(I^\prime;L^2(\Omega))}+\| z_\sigma(t_{m_0-1})\|_{L^2(\Omega)}
			\leq C(\|g\|_{L^2(I^\prime;H^{-1}(\Omega))}+\|z_T\|_{L^2(\Omega)})\nonumber
		\end{align}
		for any $1\le m_0\le M$. Therefore, we can derive the conclusion. This finishes the proof. 
	\end{proof}

	Now we prove the following stability for the fully discrete approximation to the state equation.
	\begin{lemma}\label{Lm:state_h_stability}
		For given $f\in L^2(I;L^2(\Omega))$, $u_0\in L^2(\Omega)$ and $q\in \mathcal{M}(\bar I_c;L^2(\omega))$, let $u_\sigma(q)\in Y_\sigma$ solve (\ref{state_h}). Then there holds the following estimate:
		\begin{align}
			\|\nabla u_\sigma(q)\|_{L^2(I;L^2(\Omega))}+\|u_\sigma(q)(T)\|_{L^2(\Omega)}\leq C(\|f\|_{L^2(I;L^2(\Omega))}+\|q\|_{\mathcal{M}(\bar I_c;L^2(\omega))}+\|u_0\|_{L^2(\Omega)}),\nonumber\\\label{state_h_stability}
		\end{align}
		\vspace{0.2cm}
		where $C>0$ is a constant independent of $\sigma$.
	\end{lemma}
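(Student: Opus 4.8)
The plan is to mirror the duality argument of the semi-discrete Lemma~\ref{Lm:state_tau_stability}, with the continuous Laplacian replaced by the discrete operator $-\Delta_h$ and the semi-discrete backward stability replaced by the fully discrete estimate \eqref{adjoint_h_stability} of Lemma~\ref{Lm:adjoint_stability}. The idea is that the two quantities to be bounded, $\|\nabla u_\sigma(q)\|_{L^2(I;L^2(\Omega))}^2$ and $\|u_\sigma(q)(T)\|_{L^2(\Omega)}^2$, appear together on the diagonal of the bilinear form $A$ once a suitably chosen discrete backward solution is used as a test function.

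Writing $u_\sigma:=u_\sigma(q)$, I would let $z_\sigma\in P_\sigma$ solve the fully discrete backward equation \eqref{backward_PDE_sigma} with right-hand side $g:=-\Delta_h u_\sigma$ and terminal data $z_T:=u_\sigma(T)$. These data are admissible for Lemma~\ref{Lm:adjoint_stability}: since $u_\sigma$ is piecewise constant in time with values in $V_h$, one has $g\in L^2(I;V_h)\subset L^2(I;H^{-1}(\Omega))$ and $z_T\in V_h\subset L^2(\Omega)$. Testing \eqref{backward_PDE_sigma} with $v_\sigma=u_\sigma\in Y_\sigma$ and using $(-\Delta_h u_\sigma,u_\sigma)=(\nabla u_\sigma,\nabla u_\sigma)$ yields
\[
\|\nabla u_\sigma\|_{L^2(I;L^2(\Omega))}^2+\|u_\sigma(T)\|_{L^2(\Omega)}^2
=\int_I(-\Delta_h u_\sigma,u_\sigma)\,dt+(u_\sigma(T),u_\sigma(T))=A(u_\sigma,z_\sigma).
\]
On the other hand, testing the state equation \eqref{state_h} with $z_\sigma\in P_\sigma$ gives $A(u_\sigma,z_\sigma)=\int_I(f,z_\sigma)\,dt+\langle q,z_\sigma\rangle_{\bar I_c\times\omega}+(u_0,z_\sigma(0))$, which I would estimate, exactly as in Lemma~\ref{Lm:state_tau_stability}, by $C\big(\|f\|_{L^2(I;L^2(\Omega))}+\|q\|_{\mathcal{M}(\bar I_c;L^2(\omega))}+\|u_0\|_{L^2(\Omega)}\big)\,\|z_\sigma\|_{C(\bar I;L^2(\Omega))}$, using $\|z_\sigma\|_{L^2(I;L^2(\Omega))}\le\sqrt{T}\,\|z_\sigma\|_{C(\bar I;L^2(\Omega))}$, $\|z_\sigma\|_{C(\bar I_c;L^2(\omega))}\le\|z_\sigma\|_{C(\bar I;L^2(\Omega))}$ and $\|z_\sigma(0)\|_{L^2(\Omega)}\le\|z_\sigma\|_{C(\bar I;L^2(\Omega))}$.

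It then remains to control $\|z_\sigma\|_{C(\bar I;L^2(\Omega))}$ through \eqref{adjoint_h_stability}, i.e.\ by $C\big(\|\Delta_h u_\sigma\|_{L^2(I;H^{-1}(\Omega))}+\|u_\sigma(T)\|_{L^2(\Omega)}\big)$. This is where the only genuinely new ingredient enters: for the continuous Laplacian $\|{-}\Delta w\|_{H^{-1}(\Omega)}=\|\nabla w\|_{L^2(\Omega)}$ holds as an identity, whereas for $-\Delta_h$ I would instead prove the uniform bound $\|\Delta_h u_\sigma\|_{H^{-1}(\Omega)}\le C\|\nabla u_\sigma\|_{L^2(\Omega)}$. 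To this end, denoting by $P_h:L^2(\Omega)\to V_h$ the $L^2(\Omega)$-orthogonal projection, I would write, for $v\in H^1_0(\Omega)$, $(-\Delta_h u_\sigma,v)=(-\Delta_h u_\sigma,P_h v)=(\nabla u_\sigma,\nabla P_h v)$, the first equality holding because $-\Delta_h u_\sigma\in V_h$ is $L^2$-orthogonal to $v-P_h v$, and then invoke the $H^1$-stability $\|\nabla P_h v\|_{L^2(\Omega)}\le C\|\nabla v\|_{L^2(\Omega)}$ of the $L^2$-projection, available on the quasi-uniform family $\{\mathcal{T}_h\}$. I expect this $H^1$-stability of the $L^2$-projection to be the main technical point; the rest is a routine repetition of the semi-discrete computation. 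Combining the three estimates gives $\mathcal{N}^2\le C\,\mathcal{D}\,\mathcal{N}$, where $\mathcal{N}:=\|\nabla u_\sigma\|_{L^2(I;L^2(\Omega))}+\|u_\sigma(T)\|_{L^2(\Omega)}$ and $\mathcal{D}:=\|f\|_{L^2(I;L^2(\Omega))}+\|q\|_{\mathcal{M}(\bar I_c;L^2(\omega))}+\|u_0\|_{L^2(\Omega)}$, whence \eqref{state_h_stability} follows after dividing by $\mathcal{N}$.
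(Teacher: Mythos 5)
Your proof is correct, and it has the same duality skeleton as the paper's own argument: solve the fully discrete backward problem \eqref{backward_PDE_sigma} with data built from $u_\sigma$, test it with $v_\sigma=u_\sigma$, test the state scheme \eqref{state_h} with $z_\sigma$, and close via Lemma \ref{Lm:adjoint_stability}. The substantive difference is the choice of the dual datum $g$, and there your version is the more careful one. The paper sets $g=u_\sigma$, which places $\|u_\sigma\|_{L^2(I;L^2(\Omega))}^2+\|u_\sigma(T)\|_{L^2(\Omega)}^2$ on the left and, after cancellation, bounds only $\|u_\sigma\|_{L^2(I;L^2(\Omega))}+\|u_\sigma(T)\|_{L^2(\Omega)}$ rather than the gradient norm claimed in \eqref{state_h_stability}; read against the semi-discrete Lemma \ref{Lm:state_tau_stability}, where $g=-\Delta u_\tau$, this is evidently a misprint for a Laplacian of $u_\sigma$. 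You instead take $g=-\Delta_h u_\sigma$, which does put $\|\nabla u_\sigma\|_{L^2(I;L^2(\Omega))}^2$ on the left, and you correctly identify and pay the price of that choice: the bound $\|\Delta_h u_\sigma\|_{H^{-1}(\Omega)}\le C\|\nabla u_\sigma\|_{L^2(\Omega)}$, which your projection argument establishes and which is legitimate here since the triangulations are quasi-uniform (the paper itself invokes the $H^1$-stability of the $L^2$-projection in the proof of Theorem \ref{estimate_adjoint}). For what it is worth, this extra ingredient can be avoided entirely by taking $g:=-\Delta u_\sigma$ in the distributional sense, $\langle -\Delta u_\sigma,v\rangle:=(\nabla u_\sigma,\nabla v)$ for $v\in H^1_0(\Omega)$: then $g\in L^2(I;H^{-1}(\Omega))$ with $\|g\|_{L^2(I;H^{-1}(\Omega))}=\|\nabla u_\sigma\|_{L^2(I;L^2(\Omega))}$ exactly, testing the backward problem with $u_\sigma$ yields the same left-hand side, and Lemma \ref{Lm:adjoint_stability} applies verbatim, with no mesh restriction and no projection stability; this is the exact fully discrete analogue of the paper's semi-discrete computation. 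Your concluding cancellation (quadratic left-hand side against a linear right-hand side, with the degenerate case handled trivially) is sound.
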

	
	\begin{proof}
		Similar to Lemma \ref{Lm:state_tau_stability}, we denote by $z_\sigma$ the discrete approximation defined in  (\ref{backward_PDE_sigma}) with $g=u_\sigma$ and $z_T=u_\sigma(T)$, then  
		\vspace{-0.1cm}
		\begin{eqnarray}
			&&\int_I(g,u_\sigma)dt+(z_T,u_\sigma(T))
			=A(u_\sigma,z_\sigma)\nonumber\\
			&=&\int_I(f,z_\sigma)dt+\langle q,z_\sigma\rangle_{\bar I_c\times \omega}+(u_0,z_\sigma(0))\nonumber\\
			&\leq&\|f\|_{L^2(I;L^2(\Omega))}\|z_\sigma\|_{L^2(I;L^2(\Omega))}+\|u_0\|_{L^2(\Omega)}\|z_\sigma(0)\|_{L^2(\Omega)}+\|q\|_{\mathcal{M}(\bar I_c;L^2(\omega))}\|z_\sigma\|_{{C}(\bar I_c;L^2(\omega))}\nonumber\\
			&\leq&C(\|f\|_{L^2(I;L^2(\Omega))}+\|u_0\|_{L^2(\Omega)}+\|q\|_{\mathcal{M}(\bar I_c;L^2(\omega))})\|z_\sigma\|_{{C}(\bar I;L^2(\Omega))}\nonumber\\
			&\leq&C(\|f\|_{L^2(I;L^2(\Omega))}+\|u_0\|_{L^2(\Omega)}+\|q\|_{\mathcal{M}(\bar I_c;L^2(\omega))})(\|g\|_{L^2(I;H^{-1}(\Omega))}+\|z_T\|_{L^2(\Omega)}),\nonumber
		\end{eqnarray}%We follow the idea of \cite[Lemma 4.1]{DanielsHinzeVierling2015}. We construct an $v_\sigma\in P_\sigma^0$ as follows: $v_\sigma(\cdot,T)=0$ and $\partial_tv_\sigma=-u_\sigma|_{I_m}$, $m=1,\dots,M$. It follows that
		%\begin{eqnarray}
		%	\|v_\sigma\|_{L^2(I;L^2(\Omega))}+\|\partial_tv_\sigma\|_{L^2(I;L^2(\Omega))}\leq C\|u_\sigma\|_{L^2(I;L^2(\Omega))}.\label{v_u_stability}
		%\end{(\omega))}),\nonumber
		%\end{eqnarray}	
		where  we have used  Lemma \ref{Lm:adjoint_stability}. Then we can obtain the result by canceling the common term. This finishes the proof.
		%To prove the bound for $\|\nabla u_\sigma(q)(T)\|_{L^2(\Omega)}$ we proceed as follows. 
	\end{proof}
	\subsection{A priori error estimates for the state and adjoint equations}
	In this subsection we are now able to give a priori error estimates for the finite element solutions to the state and adjoint equations.
	\begin{theorem}\label{Thm:state_error_1}
		For arbitrary $f\in L^2(I;H^1(\Omega))$, $q\in\mathcal{M}(\bar I_c;H^1(\omega))$ and $u_0\in H^1_0(\Omega)$, let $u\in L^2(I;L^2(\Omega))$ be the solution to problem (\ref{PDE_state}) and $u_\sigma\in Y_\sigma$ be its discretization defined in (\ref{discrete_state}). Then there exists a positive constant $C$, independent of $\sigma$, such that
		\begin{eqnarray}
			&&\|u-u_\sigma(q)\|_{L^2(I;L^2(\Omega))}+\|(u-u_\sigma(q))(T)\|_{L^2(\Omega)}\nonumber\\
			&\leq& C(h+\tau^{1\over 2})(\|f\|_{L^2(I;H^1(\Omega))}+\|q\|_{\mathcal{M}(\bar I_c;H^1(\omega))}+\|u_0\|_{H^1(\Omega)}).\label{estimate_u}
		\end{eqnarray}
	\end{theorem}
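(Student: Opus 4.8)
The plan is to run an Aubin--Nitsche duality argument against the fully discrete backward problem, letting the $H^1$-smoothness of $f$, $q$ and $u_0$ compensate for the low regularity of the dual solution. Set $e:=u-u_\sigma(q)$, take the dual data $g:=e\in L^2(I;L^2(\Omega))$ and $z_T:=e(T)\in L^2(\Omega)$, and let $z$ solve the continuous backward problem \eqref{backward_PDE} while $z_\sigma\in P_\sigma$ solves its fully discrete counterpart \eqref{backward_PDE_sigma} for this data. Testing \eqref{discrete_state} with $z_\sigma\in P_\sigma$ and \eqref{backward_PDE_sigma} with $u_\sigma\in Y_\sigma$ produces two representations of $A(u_\sigma,z_\sigma)$; combining them with the very weak identity \eqref{identity_2} applied to $u$ (test function $z$, data $(e,e(T))$) and cancelling, I obtain the error identity
\begin{align*}
\|e\|_{L^2(I;L^2(\Omega))}^2+\|e(T)\|_{L^2(\Omega)}^2
&=\int_I(f,z-z_\sigma)\,dt+\langle q,z-z_\sigma\rangle_{\bar I_c\times\omega}\\
&\quad+(u_0,(z-z_\sigma)(0)).
\end{align*}
Everything then reduces to bounding $z-z_\sigma$ in the three (weak or sup-in-time) norms dual to the regularity of $f$, $q$ and $u_0$, each bound carrying the factor $\|e\|_{L^2(I;L^2(\Omega))}+\|e(T)\|_{L^2(\Omega)}$ through the stability of the continuous and discrete backward solvers.

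The regularity input comes from Lemma \ref{stateEexist1}, which gives the global energy bound $\|z\|_{C(\bar I;L^2(\Omega))}+\|z\|_{L^2(I;H^1_0(\Omega))}+\|\partial_t z\|_{L^2(I;H^{-1}(\Omega))}\le C(\|e\|_{L^2(I;L^2(\Omega))}+\|e(T)\|_{L^2(\Omega)})$. Since $z_T=e(T)$ is only $L^2$, this is all the global regularity available near $t=T$; however, the rough datum sits at $t=T$, and because $I_c\subset\subset I$ keeps $\bar I_c$ (and $t=0$) bounded away from $T$, the cut-off argument used in Theorem \ref{Thm:existence_state} and Theorem \ref{Theorem3-5} transfers verbatim to the backward equation on any time window not meeting $T$. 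Hence on a neighborhood of $\bar I_c$ and on a neighborhood of $t=0$ I get the improved local regularity $z\in H^1(\cdot\,;L^2(\Omega))\cap L^2(\cdot\,;H^2(\Omega)\cap H^1_0(\Omega))\hookrightarrow C(\cdot\,;H^1_0(\Omega))$, still controlled by $C(\|e\|_{L^2(I;L^2(\Omega))}+\|e(T)\|_{L^2(\Omega)})$. These are precisely the regions probed by the control and initial terms.

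For the control term I restrict to $\bar I_c$ and use $\langle q,z-z_\sigma\rangle_{\bar I_c\times\omega}\le\|q\|_{\mathcal{M}(\bar I_c;L^2(\omega))}\|z-z_\sigma\|_{C(\bar I_c;L^2(\omega))}$, then the embedding $\mathcal{M}(\bar I_c;H^1(\omega))\hookrightarrow\mathcal{M}(\bar I_c;L^2(\omega))$. Splitting $z-z_\sigma$ through a space--time interpolant, the spatial $L^2$-projection contributes the order $h$, while the sup-in-time interpolation estimate \eqref{estimate_13} contributes $\tau^{1/2}$, its hypothesis $\partial_t z\in L^2(\cdot\,;L^2(\Omega))$ holding exactly because of the local regularity around $\bar I_c$; the discrete remainder is absorbed by the stability Lemma \ref{Lm:adjoint_stability}. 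The initial term $(u_0,(z-z_\sigma)(0))$ is handled identically near $t=0$ via $\|u_0\|_{H^1}\,\|(z-z_\sigma)(0)\|_{L^2(\Omega)}$. The source term is estimated by $\|f\|_{L^2(I;H^1(\Omega))}\,\|z-z_\sigma\|_{L^2(I;L^2(\Omega))}$, where the backward scheme analyzed as in Lemmas \ref{Lm:adjoint_tau_stability}--\ref{Lm:adjoint_stability} together with Lemma \ref{Lemma4} again yields order $h+\tau^{1/2}$. Collecting the three bounds gives $\|e\|^2+\|e(T)\|^2\le C(h+\tau^{1/2})(\|f\|_{L^2(I;H^1(\Omega))}+\|q\|_{\mathcal{M}(\bar I_c;H^1(\omega))}+\|u_0\|_{H^1(\Omega)})(\|e\|+\|e(T)\|)$, and dividing by $\|e\|+\|e(T)\|$ (using $\|e\|^2+\|e(T)\|^2\ge\frac12(\|e\|+\|e(T)\|)^2$) yields \eqref{estimate_u}.

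The main obstacle is the backward-error analysis under low regularity: the dual datum $z_T=e(T)$ is merely $L^2$, so no global $H^2$/$H^1(I;L^2(\Omega))$ smoothing is available near $t=T$, and one must (i) localize in time to obtain the $C(\bar I_c;L^2(\omega))$ estimate with the sharp order $\tau^{1/2}$ for the measure-paired control term---this is where $I_c\subset\subset I$ is indispensable, and \eqref{estimate_13} is the exact mechanism for the half-order loss---and (ii) recover a clean $h+\tau^{1/2}$ rate for the source term over the rough zone near $t=T$. Verifying that the discrete backward solver inherits these local and weak-norm estimates, and that all three terms match the common order, is the crux of the argument.
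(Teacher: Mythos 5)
Your error identity is correct: testing \eqref{discrete_state} with $z_\sigma$, testing \eqref{backward_PDE_sigma} with $u_\sigma$, and using \eqref{identity_2} for $u$ indeed gives
\begin{equation}
\|e\|_{L^2(I;L^2(\Omega))}^2+\|e(T)\|_{L^2(\Omega)}^2=\int_I(f,z-z_\sigma)\,dt+\langle q,z-z_\sigma\rangle_{\bar I_c\times\omega}+(u_0,(z-z_\sigma)(0)).\nonumber
\end{equation}
The gap lies in the three bounds you then claim for $z-z_\sigma$. Each of them is an \emph{error} estimate (not a stability estimate) for the fully discrete backward problem with terminal datum $z_T=e(T)$ that is only bounded in $L^2(\Omega)$. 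The only such estimate in reach (Theorem \ref{estimate_adjoint}) requires $z_T\in H^1_0(\Omega)$, and that hypothesis cannot be dropped: the Petrov--Galerkin scheme here is of Crank--Nicolson type and has no smoothing property. Concretely, take $g=0$ and $z_T=\phi_h$ a discrete eigenfunction of $-\Delta_h$ with eigenvalue $\lambda_h\simeq h^{-2}$; the discrete backward solution behaves like $z_\sigma|_{I_m}\approx r(\lambda_h\tau)^{M-m}\phi_h$ with $|r(s)|=|1-s/2|/(1+s/2)$, so that $|r(\lambda_h\tau)|^{M-m}\approx e^{-4(T-t_m)/(\lambda_h\tau^2)}$. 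If $\tau\gg h$ (e.g.\ $\tau=\sqrt h$), then $\lambda_h\tau^2\gg1$ and $z_\sigma$ keeps magnitude $\simeq\|z_T\|_{L^2(\Omega)}$ on all of $\bar I$, while the continuous solution $z$ (whose low-frequency content relative to the continuous Laplacian is $O(h^2)$) is asymptotically negligible on $\bar I_c$, near $t=0$, and in $L^2(I;L^2(\Omega))$ away from $T$. Hence $\|z-z_\sigma\|_{C(\bar I_c;L^2(\omega))}$, $\|(z-z_\sigma)(0)\|_{L^2(\Omega)}$ and $\|z-z_\sigma\|_{L^2(I;L^2(\Omega))}$ all remain of order $\|z_T\|_{L^2(\Omega)}$, and no bound $C(h+\tau^{1/2})\|z_T\|_{L^2(\Omega)}$ with $C$ independent of $\sigma$ can hold. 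Your time-localization is valid for the \emph{continuous} $z$ (cut-off away from $T$), but it does not transfer to $z_\sigma$: since the scheme does not damp high frequencies, the rough datum at $t=T$ pollutes the discrete solution on $\bar I_c$ and at $t=0$; and the source term integrates over all of $I$, including the rough zone near $T$, in any case. Note also that by estimating $f$ and $q$ in $L^2$-type pairings you discard exactly the $H^1$ regularity of the data, which is the only mechanism (negative spatial norms of $z-z_\sigma$, in which the undamped modes are attenuated) that could conceivably control this pollution.

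This is precisely the trap the paper's proof is constructed to avoid. It inserts the time-semidiscrete solution $u_\tau$ of \eqref{state_k} and runs two separate duality arguments, neither of which needs a rough-data error estimate for a discrete dual problem. For $u-u_\tau$ it uses the \emph{continuous} dual problem, whose terminal datum $u(T)-u_\tau(T)$ is bounded in $H^1_0(\Omega)$ uniformly in $\tau$ --- via the local regularity \eqref{local_estimate} of $u$ and the $H^1$-data stability \eqref{state_k_stability_2} of Lemma \ref{Lm:state_tau_stability}; this is where $f\in L^2(I;H^1(\Omega))$, $q\in\mathcal{M}(\bar I_c;H^1(\omega))$ and $u_0\in H^1_0(\Omega)$ actually enter. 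For $u_\tau-u_\sigma$ it uses the \emph{discrete} dual problem with discrete data $(\xi_\sigma,\xi_\sigma(T))$, so only the stability of Lemma \ref{Lm:adjoint_stability} is required, and the rate $h$ comes from the Ritz-projection error of $u_\tau$, again controlled by Lemma \ref{Lm:state_tau_stability}. To salvage your single-duality route you would have to prove new nonsmooth-data error estimates (in negative spatial norms) for this Crank--Nicolson-type scheme, or impose a step-size restriction of the form $\tau\lesssim h$; with the tools available in the paper, the two-step splitting is essentially forced.
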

	\begin{proof}
		We split the fully discrete error estimate into the temporal and spatial parts. To begin with, let  $u_\tau\in Y_\tau$ be the semi-discrete solution to problem (\ref{state_k}). Then we estimate respectively $\|u-u_\tau\|_{L^2(I;L^2(\Omega))}+\|u(T)-u_\tau(T)\|_{L^2(\Omega)}$ and $\|u_\tau-u_\sigma\|_{L^2(I;L^2(\Omega))}+ \|u_\tau(T)-u_\sigma(T)\|_{L^2(\Omega)}$.  To prove the two estimates  we use the duality argument (cf.  \cite{CasasClasonKunisch-2013,Gong-2013}).
		
		We first prove the estimate for $\|u-u_\tau\|_{L^2(I;L^2(\Omega))}+\|u(T)-u_\tau(T)\|_{L^2(\Omega)}$. Let $z\in H^1(I;L^2(\Omega))\cap L^2(I;H^2(\Omega)\cap H^1_0(\Omega))$ be the solution to the backward parabolic equation (\ref{backward_PDE}) with $g:=u-u_\tau,\ z_T:=u(T)-u_\tau(T)$. Setting $\tilde z_\tau :={\Pi}_{P_\tau} z\in P_\tau$,  then there holds $(z-\tilde z_\tau)(T)=0$ and $$
		(v_\tau,\partial_t(z-\tilde z_\tau))_{L^2(I;L^2(\Omega))}=0\quad\forall v_\tau\in Y_\tau.
		$$
		%	Its semidiscrete approximation can be written as: find $z_\tau\in P_\tau$ such that
		%	\begin{eqnarray}
		%	A(v_\tau,z_\tau)=\int_0^T(g,v_\tau)dt\quad\forall v_\tau\in Y_\tau.
		%	\end{eqnarray}
		%	Thus, we have the orthogonality
		%	\begin{eqnarray}
		%	A(v_\tau,z-z_\tau)=0\quad\forall v_\tau\in Y_\tau.
		%	\end{eqnarray}
		%	We introduce the projection operator $Q_\tau:H^1(I;H_0^1(\Omega))\rightarrow P_\tau$ such that
		%	\begin{eqnarray}
		%	Q_\tau z\in P_\tau:\quad \int_0^T(v_\tau,\partial_t(z-Q_\tau z))=0\quad\forall v_\tau\in Y_\tau.
		%	\end{eqnarray}	
		%, it is not difficult to verify that for $\forall v_\tau \in Y_\tau$ there holds
		%\begin{eqnarray}
		%	\int_0^T(v_\tau,\partial_t(z-\tilde z_\tau))dt=\sum\limits_{m=1}^M\int_{I_m}(v_\tau,\partial_t(z-\tilde z_\tau))dt
		%	=\sum\limits_{m=1}^M(v_\tau, (z-\tilde z_\tau)|_{t_{m-1}}^{t_m})=0,\label{Inter_ortho}
		%\end{eqnarray}
		%where we used the fact that $v_\tau$ is piecewise constant in time on each interval $I_m$. 
		From (\ref{weak_formation_b}) and (\ref{state_k}) we have
		\begin{eqnarray}
			%\begin{split}
			&&\int_I(g,u-u_\tau)dt+(z_T,u(T)-u_\tau(T))
			=A(u,z)-A(u_\tau,z_\tau)\nonumber\\
			&=&A(u,z)-A(u_\tau,z)-\int_I(f,\tilde z_\tau)dt-\langle q,\tilde z_\tau\rangle_{\bar I_c\times \omega}-(u_0,\tilde z_\tau(0))+A(u_\tau,\tilde z_\tau)\nonumber\\
			&=&\int_I(f,z-\tilde z_\tau)dt+\langle q,z-\tilde z_\tau\rangle_{\bar I_c\times \omega}+(u_0,z(0)-\tilde z_\tau(0))-A(u_\tau,z-\tilde z_\tau)\nonumber\\
			&=&\int_I(f,z-\tilde z_\tau)dt+\langle q,z-\tilde z_\tau\rangle_{\bar I_c\times \omega}-\int_I(\nabla u_\tau,\nabla(z-\tilde{z}_\tau))dt\nonumber\\
			%&\le&\|f\|_{L^2(I;L^2(\Omega))}\|z-\tilde{z}_\tau\|_{L^2(I;L^2(\Omega))}+\|q\|_{\mathcal{M}(\bar{I}_c;L^2(\omega))}\|z-\tilde{z}_\tau\|_{C(\bar{I}_c;L^2(\omega)))}+ \|\Delta u_\tau\|_{L^2(I;L^2(\Omega))}\|z-\tilde{z}_\tau\|_{L^2(I;L^2(\Omega))} \nonumber\\
			&\le&\left(\|f\|_{L^2(I;L^2(\Omega))}+\|\Delta u_\tau\|_{L^2(I;L^2(\Omega))}\right)\|z-\tilde{z}_\tau\|_{L^2(I;L^2(\Omega))}+\|q\|_{\mathcal{M}(\bar{I}_c;L^2(\omega))}\|z-\tilde{z}_\tau\|_{C(\bar{I}_c;L^2(\omega))}\nonumber\\
			&\le&C\tau\left(\|f\|_{L^2(I;L^2(\Omega))}+\|\Delta u_\tau\|_{L^2(I;L^2(\Omega))}\right)\|z\|_{H^1(I;L^2(\Omega))}+C\tau^{\frac{1}{2}}\|q\|_{\mathcal{M}(\bar{I}_c;L^2(\omega))}\|z\|_{H^1({I}_c;L^2(\Omega))}\nonumber\\
			&\le&C\tau\left(\|f\|_{L^2(I;L^2(\Omega))}+\|\Delta u_\tau\|_{L^2(I;L^2(\Omega))}\right)\left(\|g\|_{L^2(I;L^2(\Omega))}+\|z_T\|_{H^1(\Omega)}\right)\nonumber\\
			&&+C\tau^{\frac{1}{2}}\|q\|_{\mathcal{M}(\bar{I}_c;L^2(\omega))}\left(\|g\|_{L^2(I;L^2(\Omega))}+\|z_T\|_{L^2(\Omega)}\right) \nonumber\\
			&\le&C\tau H\left(\|g\|_{L^2(I;L^2(\Omega))}+H\right)+C\tau^{\frac{1}{2}}\|q\|_{\mathcal{M}(\bar{I}_c;L^2(\omega))}\left(\|g\|_{L^2(I;L^2(\Omega))}+\|z_T\|_{L^2(\Omega)}\right) \nonumber\\
			&\le&C\tau(1+\tau) H^2+C\tau\|q\|^2_{\mathcal{M}(\bar{I}_c;L^2(\omega))}+{1\over 2}(\|g\|^2_{L^2(I;L^2(\Omega))}+\|z_T\|^2_{L^2(\Omega)}),\nonumber  
		\end{eqnarray}
		where we have used Lemma \ref{Lm:state_tau_stability}, Theorem \ref{Thm:existence_state} and
		$H:=\|f\|_{L^2(I;H^1(\Omega))}+\|q \|_{\mathcal{M}(\bar{I}_c;H^1(\omega))}+\|u_0\|_{H^1(\Omega)}$. Therefore, we have obtained 
		\begin{eqnarray}
			&&\|u-u_\tau\|_{L^2(I;L^2(\Omega))}+\|u(T)-u_\tau(T)\|_{L^2(I;L^2(\Omega))}\\\label{key_0}
			&\le& C\tau^{\frac{1}{2}}(\|f\|_{L^2(I;H^1(\Omega))}+\|q \|_{\mathcal{M}(\bar{I}_c;H^1(\omega))}+\|u_0\|_{H^1(\Omega)}).\nonumber
		\end{eqnarray}
		
		Next, we estimate $\|u_\tau-u_\sigma\|_{L^2(I;L^2(\Omega))}+\|u_\tau(T)-u_\sigma(T)\|_{L^2(\Omega)}$.
		Note that there exists the following splitting:
		\begin{eqnarray}
			u_\tau-u_\sigma=u_\tau-\mathcal{R}_hu_\tau+\mathcal{R}_hu_\tau-u_\sigma:=\eta_\sigma+\xi_\sigma,\nonumber
		\end{eqnarray}
		where $\mathcal{R}_h:H_0^1(\Omega)\rightarrow V_h$ is the standard spatial Ritz projection (cf. \cite{PGC1978}).% defined as follows
		%\begin{eqnarray}
		%	(\nabla v_h,\nabla(\mathcal{R}_h z-z))=0\quad \forall v_h\in V_h.\nonumber
		%\end{eqnarray}
		
		Let $z_\sigma\in P_\sigma$ be the fully discrete solution to problem (\ref{backward_PDE_sigma}) with $g:=\xi_\sigma$ and $z_T:=\xi_\sigma(T)$.  Taking $v_\sigma=\xi_\sigma \in Y_\sigma$ in the scheme (\ref{backward_PDE_sigma}) and applying the Galerkin orthogonality, one obtains
		\begin{eqnarray}\label{key_00}
			&&\int_I(g,\xi_\sigma)dt+(z_T,\xi_\sigma(T)) 
			=A(\xi_\sigma,z_\sigma)=-A(\eta_\sigma,z_\sigma)\nonumber\\ 
			&=&\int_I(\eta_\sigma,\partial_t z_\sigma)-(\nabla\eta_\sigma,\nabla z_\sigma)dt-(\eta_\sigma(T),z_\sigma(T))\nonumber\\
			&=&\int_I(\eta_\sigma,\partial_t z_\sigma)dt-(\eta_\sigma(T),z_\sigma(T))\nonumber\\
			&\le&\|\nabla\eta_\sigma\|_{L^2(I;L^2(\Omega))}\|\nabla\partial_t(-\Delta_h)^{-1}z_\sigma\|_{L^2(I;L^2(\Omega))}+\|\eta_\sigma(T)\|_{L^2(\Omega)}\| z_\sigma(T)\|_{L^2(\Omega)}\nonumber\\
			&\le&C(\|\nabla\eta_\sigma\|_{L^2(I;L^2(\Omega))}+\|\eta_\sigma(T)\|_{L^2(\Omega)})(\|g\|_{L^2(I;L^2(\Omega))}+\|z_T\|_{L^2(\Omega)}),\nonumber
		\end{eqnarray}
		where we have used Lemma \ref{Lm:adjoint_stability}. Therefore, we have
		\begin{eqnarray}
			&&\|u_\tau-u_\sigma\|_{L^2(I;L^2(\Omega))}+\|u_\tau(T)-u_\sigma(T)\|_{L^2(\Omega)}\\\label{key_1}
			&\le& C(\|\nabla\eta_\sigma\|_{L^2(I;L^2(\Omega))}+\|\eta_\sigma(T)\|_{L^2(\Omega)})\nonumber\\
			&\le& Ch(\|\Delta u_\tau\|_{L^2(I;L^2(\Omega))}+\|\nabla u_\tau(T)\|_{L^2(\Omega)})\nonumber\\
			&\le& Ch(\|f\|_{L^2(I;H^1(\Omega))}+\|q\|_{\mathcal{M}(\bar{I}_c;H^1(\omega))}+\|u_0\|_{H^1(\Omega)}),\nonumber
		\end{eqnarray}
		where we have used Lemma \ref{Lm:state_tau_stability}. Combining the above two estimates we  finish the proof.
		%where in the last step we used the definition of the Ritz projection. Now we can conclude from Lemma \ref{Lm:adjoint_stability} that
		%\begin{eqnarray}
		%	\|\xi_\sigma\|^2_{L^2(I;L^2(\Omega))}\leq \|\partial_tz_\sigma\|_{L^2(I;L^2(\Omega))}\|\eta_\sigma\|_{L^2(I;L^2(\Omega))}\leq C\|\xi_\sigma\|_{L^2(I;L^2(\Omega))}\|\eta_\sigma\|_{L^2(I;L^2(\Omega))}.\nonumber
		%\end{eqnarray}
		%Combining the above results with the standard error estimate for Ritz projection we have
		%\begin{eqnarray}
		%	\|u_\tau(q)-u_\sigma(q)\|_{L^2(I;L^2(\Omega))}&\leq& \|\xi_\sigma\|_{L^2(I;L^2(\Omega))}+\|\eta_\sigma\|_{L^2(I;L^2(\Omega))}\nonumber\\
		%	&\leq&C\|\eta_\sigma\|_{L^2(I;L^2(\Omega))}\nonumber\\
		%	&\leq&Ch\|\nabla u_\tau(q)\|_{L^2(I;L^2(\Omega))}\nonumber\\
		%	&\leq&Ch(\|f\|_{L^2(I;L^2(\Omega))}+\|q\|_{\mathcal{M}(\bar I_c;L^2(\omega))}+\|u_0\|_{L^2(\Omega)}),\label{error_step2}
		%\end{eqnarray}
		%where we used Lemma \ref{Lm:state_tau_stability} in the last step. 
		%Combining (\ref{error_step1}) and (\ref{error_step2}) we finish the proof of (\ref{estimate_u}).	Now it remains to prove (\ref{estimate_u_final}). 
	\end{proof}

	\begin{theorem}\label{estimate_adjoint}
		For any $z_T\in H^1_0(\Omega)$ and $g\in L^2(I;L^2(\Omega))$, let $z_\sigma\in P_\sigma$ be the solution of the discrete scheme  \eqref{backward_PDE_sigma}, and $z\in H^1(I;L^2(\Omega))\cap L^2(I;H^2(\Omega)\cap H^1_0(\Omega))$ be the solution of equation \eqref{backward_PDE}. Then there exists a positive constant $C>0$, independent of $\sigma$, such that
		\begin{equation}\label{error_estimate_adjoint}
			\|z-z_\sigma\|_{C(\bar{I};L^2(\Omega))}\le C(h+\tau^{\frac{1}{2}})(\|g\|_{L^2(I;L^2(\Omega))}+\|z_T\|_{H^1(\Omega)}).
		\end{equation}
	\end{theorem}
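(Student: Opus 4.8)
The goal is to bound $\|z-z_\sigma\|_{C(\bar I;L^2(\Omega))}$ where $z$ solves the backward parabolic equation~\eqref{backward_PDE} with right-hand side $g$ and terminal data $z_T\in H^1_0(\Omega)$, and $z_\sigma\in P_\sigma$ solves the fully discrete scheme~\eqref{backward_PDE_sigma}. The natural strategy, consistent with the error analysis for the forward state equation in Theorem~\ref{Thm:state_error_1}, is to split the error into a temporal semi-discretization part and a spatial part by inserting the intermediate semi-discrete solution $z_\tau\in P_\tau$ defined by~\eqref{backward_PDE_tau}: write
\[
z-z_\sigma = (z-z_\tau) + (z_\tau - z_\sigma).
\]
Since $z_T\in H^1_0(\Omega)$ and $\Omega$ is convex, Lemma~\ref{stateEexist1} guarantees the extra regularity $z\in H^1(I;L^2(\Omega))\cap L^2(I;H^2(\Omega)\cap H^1_0(\Omega))$, so all the interpolation estimates of Lemma~\ref{Lemma4} are available, and Lemma~\ref{Lm:adjoint_tau_stability} provides the semi-discrete $H^1_0$-in-time-uniform stability estimate~\eqref{adjoint_k_stability_2} that I expect to need.

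\textbf{The temporal error $z-z_\tau$.}
First I would establish $\|z-z_\tau\|_{C(\bar I;L^2(\Omega))}\le C\tau^{1/2}(\|g\|_{L^2(I;L^2(\Omega))}+\|z_T\|_{H^1(\Omega)})$. The cleanest route is again a duality/energy argument analogous to the proof of Lemma~\ref{Lm:state_tau_stability}: decompose $z-z_\tau = (z-{\Pi}_{P_\tau}z) + ({\Pi}_{P_\tau}z - z_\tau)$. The interpolation error $z-{\Pi}_{P_\tau}z$ is controlled in $L^\infty(I;L^2(\Omega))$ by~\eqref{estimate_13} at order $\tau^{1/2}$. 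For the discrete remainder ${\Pi}_{P_\tau}z-z_\tau\in P_\tau$ I would use the Galerkin orthogonality of the semi-discrete scheme together with the stability estimate~\eqref{adjoint_k_stability_1} (and~\eqref{adjoint_k_stability_2} where $H^1_0$ regularity is exploited), testing against a suitably chosen $v_\tau\in Y_\tau$ to extract a pointwise-in-time $L^2(\Omega)$ bound on the nodal values, which dominates the $C(\bar I;L^2(\Omega))$ norm since $P_\tau$ functions are piecewise linear in time. Combining these gives the $\tau^{1/2}$ temporal contribution.

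\textbf{The spatial error $z_\tau-z_\sigma$.}
For the spatial part I would introduce the Ritz projection $\mathcal R_h$ (as in Theorem~\ref{Thm:state_error_1}) and split $z_\tau - z_\sigma = (z_\tau - \mathcal R_h z_\tau) + (\mathcal R_h z_\tau - z_\sigma) =: \eta_\sigma + \xi_\sigma$, where $\xi_\sigma\in P_\sigma$. The interpolation term $\eta_\sigma$ is bounded by $Ch\|\nabla z_\tau\|$-type quantities, which are in turn controlled at order $h$ using the semi-discrete stability estimate~\eqref{adjoint_k_stability_2}. For $\xi_\sigma$ I would exploit Galerkin orthogonality between~\eqref{backward_PDE_tau} and~\eqref{backward_PDE_sigma}, testing the fully discrete equation with $\xi_\sigma$ itself and invoking the fully discrete stability Lemma~\ref{Lm:adjoint_stability} (the bound on $\|z_\sigma\|_{C(\bar I;L^2)}$ and on $\|\nabla\partial_t(-\Delta_h)^{-1}z_\sigma\|$) to absorb the remaining terms, mirroring the estimate~\eqref{key_1} in the proof of Theorem~\ref{Thm:state_error_1}. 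This yields the $O(h)$ spatial contribution, and adding the two parts gives~\eqref{error_estimate_adjoint}.

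\textbf{The main obstacle.}
I expect the delicate point to be obtaining the error in the \emph{uniform} norm $C(\bar I;L^2(\Omega))$ rather than merely in $L^2(I;L^2(\Omega))$; the duality arguments used elsewhere in the paper naturally produce $L^2$-in-time bounds, whereas here I must control the supremum over $t$. This forces me to work at the level of nodal values and use the piecewise-linear-in-time structure of $P_\tau$ and $P_\sigma$ together with the $C(\bar I;L^2(\Omega))$ stability in Lemmas~\ref{Lm:adjoint_tau_stability} and~\ref{Lm:adjoint_stability}, rather than a pure duality test. Reconciling the duality technique (which wants an $L^2$ test function) with the pointwise-in-time control will be the technical crux; I anticipate handling it by fixing an arbitrary time node and running the stability argument on the truncated interval $I'=\cup_{m\ge m_0}I_m$, as is done in the proofs of those two lemmas.
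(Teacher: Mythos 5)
Your temporal half is sound, and the mechanism you anticipate for pointwise-in-time control (fixing an arbitrary node $t_{m_0-1}$, truncating to $I'=\cup_{m\ge m_0}I_m$, and testing with $v|_{I_m}=-\partial_t(-\Delta_h)^{-1}(\cdot)|_{I_m}$) is exactly what the paper uses. The genuine gap is in your spatial step: the Ritz projection, borrowed from Theorem~\ref{Thm:state_error_1}, does not transfer to the adjoint equation, because the Petrov--Galerkin scheme is asymmetric. In the forward analysis the discrete error $\xi_\sigma=\mathcal{R}_hu_\tau-u_\sigma$ lies in $Y_\sigma$, and the duality argument produces $\int_I(\eta_\sigma,\partial_tz_\sigma)\,dt$, i.e.\ the time derivative falls on the discrete \emph{dual} solution, which Lemma~\ref{Lm:adjoint_stability} controls. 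In your setting the roles are reversed: $\xi_\sigma=\mathcal{R}_hz_\tau-z_\sigma\in P_\sigma$ (so, incidentally, you cannot ``test the fully discrete equation with $\xi_\sigma$ itself'' --- the test space of \eqref{backward_PDE_sigma} is $Y_\sigma$), and Galerkin orthogonality gives, for $v_\sigma\in Y_\sigma$,
\[
A(v_\sigma,\xi_\sigma)=-A(v_\sigma,\eta_\sigma)=\int_I\langle v_\sigma,\partial_t\eta_\sigma\rangle\,dt-(v_\sigma(T),\eta_\sigma(T)),
\]
since Ritz orthogonality only removes the gradient term. Now the time derivative falls on the projection error, $\partial_t\eta_\sigma=\partial_tz_\tau-\mathcal{R}_h\partial_tz_\tau$, and every route to an $O(h)$ bound for this term (in $L^2$ or in $H^{-1}$ paired against $\nabla v_\sigma$) requires $\|\nabla\partial_tz_\tau\|_{L^2(I;L^2(\Omega))}$. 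No such bound is provided by Lemma~\ref{Lm:adjoint_tau_stability}, and none can hold uniformly in $\tau$ under the assumed data $g\in L^2(I;L^2(\Omega))$, $z_T\in H^1_0(\Omega)$: in the limit it would force $\partial_tz\in L^2(I;H^1_0(\Omega))$, which fails for general $g$. So the spatial part of your argument does not close.

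The paper avoids this with a different, one-shot decomposition: $z-z_\sigma=(z-\pi_h\Pi_{P_\tau}z)+(\pi_h\Pi_{P_\tau}z-z_\sigma)=:\eta_\sigma+\zeta_\sigma$, combining the spatial $L^2$-projection $\pi_h$ with the temporal nodal interpolant $\Pi_{P_\tau}$. Because $\Pi_{P_\tau}z$ interpolates at the nodes, $\eta_\sigma(t_m)=z(t_m)-\pi_hz(t_m)$ is $L^2$-orthogonal to $V_h$, so in $-A(v_\sigma,\eta_\sigma)$ both the jump terms $\sum_m\bigl(v_\sigma|_{I_m},\eta_\sigma(t_m)-\eta_\sigma(t_{m-1})\bigr)$ and the terminal term vanish identically; only $-\int_I(\nabla v_\sigma,\nabla\eta_\sigma)\,dt$ survives. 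The truncated test function argument you anticipated then gives $\|\zeta_\sigma\|_{C(\bar I;L^2(\Omega))}\le\|\nabla\eta_\sigma\|_{L^2(I;L^2(\Omega))}$, and both $\|\eta_\sigma\|_{C(\bar I;L^2(\Omega))}$ and $\|\nabla\eta_\sigma\|_{L^2(I;L^2(\Omega))}$ are bounded by $C(h+\tau^{1/2})(\|g\|_{L^2(I;L^2(\Omega))}+\|z_T\|_{H^1(\Omega)})$ using the $L^2$- and $H^1$-stability of $\pi_h$ together with \eqref{estimate_13} and \eqref{estimate_15} --- only the continuous regularity $z\in H^1(I;L^2(\Omega))\cap L^2(I;H^2(\Omega))$ is needed, and the intermediate semi-discrete solution $z_\tau$ never enters. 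If you insist on your two-step structure, the minimal repair is to replace $\mathcal{R}_h$ by $\pi_h$, so that the jump and terminal terms vanish as above; but then the surviving gradient term requires a uniform bound on $\|\Delta z_\tau\|_{L^2(I;L^2(\Omega))}$ for the semi-discrete adjoint, a stability estimate the paper never states quantitatively, which is why the one-shot decomposition is both cleaner and safer.
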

	\begin{proof}
		Let $e_{\sigma}:=z-z_\sigma=(z-\pi_{h}\Pi_{P_\tau }z)+(\pi_{h}\Pi_{P_\tau }z-z_\sigma)=:\eta_\sigma+\zeta_\sigma$, then by the Galerkin orthogonality there holds for any $v_\sigma\in Y_\sigma$ that
		\begin{eqnarray}
			A(v_\sigma,\zeta_\sigma)=-A(v_\sigma,\eta_\sigma)&=&\int_I(v_\sigma,\partial_t\eta_\sigma)-(\nabla v_\sigma,\nabla\eta_\sigma)dt-(v_\sigma(T),\eta_\sigma(T))\nonumber\\
			&=&\sum_{m=1}^M\Big(v_\sigma|_{I_m},(z-\pi_{h}\Pi_{P_\tau }z)(t_m)-(z-\pi_{h}\Pi_{P_\tau }z)(t_{m-1})\Big)\\
			&&-(v_\sigma(T),\eta_\sigma(T))-\int_I(\nabla v_\sigma,\nabla\eta_\sigma)dt\nonumber\\
			&=&-\int_I(\nabla v_\sigma,\nabla\eta_\sigma)dt,\nonumber
		\end{eqnarray}
		i.e., $\zeta_\sigma$ satisfies the following variational problem: Find $\zeta_\sigma\in P^0_\sigma$ such that
		\begin{eqnarray}\label{pointwise}
			\int_I-(v_\sigma,\partial_t\zeta_\sigma)+(\nabla v_\sigma,\nabla \zeta_\sigma)dt=-\int_I(\nabla v_\sigma,\nabla\eta_\sigma)dt\quad\ \forall v_\sigma\in Y_\sigma.
		\end{eqnarray}
		For arbitrary $1\le m_0\le M$, taking the test function $v_\sigma$ satisfying  $v_\sigma|_{I_m}=-\partial_t(-\Delta_h)^{-1}\zeta_\sigma|_{I_m}$, $m=m_0,m_0+1,\cdots,M$ and $v_\sigma|_{I_m}=0$, $m=1,2,\cdots,m_0-1$, $v_\sigma(T)=0$ in the above identity \eqref{pointwise}, then we have
		\begin{eqnarray}
			&&\left\|\nabla \partial_t(-\Delta_h)^{-1}\zeta_\sigma\right\|^2_{L^2(I^\prime;L^2(\Omega))}+\frac{1}{2}\|\zeta_\sigma(t_{m_0-1})\|^2_{L^2(\Omega)}\nonumber\\
			&=&\int_{t_{m_0-1}}^T(\nabla\partial_t(-\Delta_h)^{-1}\zeta_\sigma,\nabla\eta_\sigma)dt\nonumber\\
			&\le&\frac{1}{2}\|\nabla\partial_t(-\Delta_h)^{-1}\zeta_\sigma\|^2_{L^2({I^\prime};L^2(\Omega))}+\frac{1}{2}\|\nabla\eta_\sigma\|^2_{L^2({I^\prime};L^2(\Omega))}.\nonumber
		\end{eqnarray}
		Therefore, we obtain
		\begin{eqnarray}
			\|\zeta_\sigma(t_{m_0-1})\|^2_{L^2(\Omega)}\le \|\nabla\eta_\sigma\|^2_{L^2({I^\prime};L^2(\Omega))}\nonumber
		\end{eqnarray}
		for arbitrary $1\le m_0\le M$, i.e.,
		\begin{eqnarray}
			\|\zeta_\sigma\|_{C(\bar{I};L^2(\Omega))}=\max_{1\le m\le M-1} \|\zeta_\sigma(t_m)\|_{L^2(\Omega)}\le \|\nabla\eta_\sigma\|_{L^2(I;L^2(\Omega))}.\nonumber
		\end{eqnarray}
		Combining with the expression $e_\sigma=\eta_\sigma+\zeta_\sigma$ one deduces
		\begin{eqnarray}
			\|e_\sigma\|_{C(\bar{I};L^2(\Omega))}\le\|\eta_\sigma\|_{C(\bar{I};L^2(\Omega))}+\|\nabla\eta_\sigma\|_{L^2(I;L^2(\Omega))}.\label{pointwise_estimate}
		\end{eqnarray}
		Then it suffices to bound the two terms on the right-hand side. 
		
		The first term on the right-hand side of \eqref{pointwise_estimate} can be bounded by 
		\begin{eqnarray}
			\|\eta_\sigma\|_{C(\bar{I};L^2(\Omega))}&=&\|z-\pi_h\Pi_{P_\tau}z\|_{C(\bar{I};L^2(\Omega))}\nonumber\\ 
			&\le& \|z-\pi_hz\|_{C(\bar{I};L^2(\Omega))}+\|\pi_h(z-\Pi_{P_\tau}z)\|_{C(\bar{I};L^2(\Omega))}\nonumber\\
			&\le& \|z-\pi_hz\|_{C(\bar{I};L^2(\Omega))}+C\|z-\Pi_{P_\tau}z\|_{C(\bar{I};L^2(\Omega))}\nonumber\\
			&\le& Ch\|z\|_{C(\bar{I};H^1(\Omega))}+C\tau^{\frac{1}{2}}\|z\|_{H^1(I;L^2(\Omega))}\nonumber\\
			&\le& C(h+\tau^{\frac{1}{2}})(\|z\|_{L^2(I;H^2(\Omega))}+\|z\|_{H^1(I;L^2(\Omega))})\nonumber\\
			&\le&
			C(h+\tau^{\frac{1}{2}})(\|g\|_{L^2(I;L^2(\Omega))}+\|z_T\|_{H^1(\Omega)}),\label{key_01}
		\end{eqnarray}
		where we have used the stability of the $L^2$-projection $\pi_h$. On the other hand, $\|\nabla\eta_\sigma\|_{L^2(I;L^2(\Omega))}$ can be  estimated by
		\begin{eqnarray}
			\|\nabla\eta_\sigma\|_{L^2(I;L^2(\Omega))}&=&\|\nabla(z-\pi_h\Pi_{P_\tau}z)\|_{L^2(I;L^2(\Omega))}\nonumber \\ 
			&\le& \|\nabla(z-\pi_hz)\|_{L^2(I;L^2(\Omega))}+\|\nabla\pi_h(z-\Pi_{P_\tau}z)\|_{L^2(I;L^2(\Omega))}\nonumber\\
			&\le&
			\|\nabla(z-\pi_hz)\|_{L^2(I;L^2(\Omega))}+\|\nabla(z-\Pi_{P_\tau}z)\|_{L^2(I;L^2(\Omega))}\nonumber\\
			&\le& Ch \|z\|_{L^2(I;H^2(\Omega))}+C\tau^{\frac{1}{2}}(\|z\|_{L^2(I;H^2(\Omega))}+\|z\|_{H^1(I;L^2(\Omega))})\nonumber\\
			&\le&C(h+\tau^{\frac{1}{2}})(\|g\|_{L^2(I;L^2(\Omega))}+\|z_T\|_{H^1(\Omega)}),\label{key_11}
		\end{eqnarray}
		where we have used the $H^1$-stability of the $L^2$-projection $\pi_h$ and the estimate \eqref{estimate_15} in Lemma \ref{Lemma4}. Combining the two estimates \eqref{key_01} and \eqref{key_11} we finish the proof.
	\end{proof}

	\subsection{Error estimates for the optimal control problem}
	At first we prove a plain convergence for the solution of problem (\ref{min-J-h}) to that of problem (\ref{min-J}) as $|\sigma|:=\tau+h\rightarrow 0^+$.
	\begin{theorem}\label{Thm:plain_convergence}
		Let $\{\hat{q}_\sigma\}\subseteq \mathcal{M}(\bar I_c;L^2(\omega))$ be the set of optimal controls for the discrete optimal control problem (\ref{min-J-h}), and $\bar{u}_\sigma\in Y_\sigma$ be the unique discrete optimal state associated to $\{\hat{q}_\sigma\}$.  Let  $(\bar{q},\bar{u})\in \mathcal{M}(\bar I_c;L^2(\omega))\times X$ be the unique optimal pair of the continuous problem (\ref{min-J}), where $\bar{q}$ is the optimal control and $\bar{u}$ is the optimal state. Then  we obtain
		\begin{eqnarray}
			{q}_\sigma\stackrel{*}{\rightharpoonup} \bar{q}\in \mathcal{M}(\bar I_c;L^2(\omega))\quad\ \forall q_\sigma\in \{\hat{q}_\sigma\},\label{plain_conv_1}\\
			\|q_\sigma\|_{\mathcal{M}(\bar I_c;L^2(\omega))}\rightarrow \|\bar{q}\|_{\mathcal{M}(\bar I_c;L^2(\omega))}\quad\ \forall q_\sigma\in \{\hat{q}_\sigma\},\label{plain_conv_2}\\
			\|\bar{u}_\sigma-\bar{u}\|_{L^2(I;L^2(\Omega))}+\|(\bar{u}_\sigma-\bar{u})(T)\|_{L^2(\Omega)}\rightarrow 0,\quad \label{plain_conv_3}\\
			J_\sigma(q_\sigma)\rightarrow J(\bar{q})\quad\ \forall q_\sigma\in \{\hat{q}_\sigma\},\label{plain_conv_4}
		\end{eqnarray}
		when $|\sigma|\rightarrow 0^+$.
	\end{theorem}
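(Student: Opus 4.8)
The plan is to follow the standard three-step strategy for such discretizations: first establish a uniform a priori bound on the discrete optimal controls, then extract a weak-$*$ limit and identify it with $\bar q$, and finally upgrade the convergence of the states from weak to strong. I would begin by using the continuous optimizer $\bar q$ itself as a feasible competitor in the discrete problem. Since $\bar q\in\mathcal{M}(\bar I_c;H^1(\omega))$ by Theorem~\ref{Theorem3-5}, the a priori error estimate of Theorem~\ref{Thm:state_error_1} gives $u_\sigma(\bar q)\to\bar u$ in $L^2(I;L^2(\Omega))$ and at $t=T$, whence $J_\sigma(\bar q)\to J(\bar q)$. Optimality of any $q_\sigma\in\{\hat q_\sigma\}$ then yields $\alpha\|q_\sigma\|_{\mathcal{M}(\bar I_c;L^2(\omega))}\le J_\sigma(q_\sigma)\le J_\sigma(\bar q)\le C$, so the family $\{q_\sigma\}$ is uniformly bounded and $\limsup_{|\sigma|\to0}J_\sigma(q_\sigma)\le J(\bar q)$.

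Next, since the predual $C(\bar I_c;L^2(\omega))$ is separable, the Banach--Alaoglu theorem lets me extract from any sequence a subsequence with $q_{\sigma'}\stackrel{*}{\rightharpoonup}\tilde q$ for some $\tilde q\in\mathcal{M}(\bar I_c;L^2(\omega))$. By Lemma~\ref{Lm:state_h_stability} the discrete states are bounded in $L^2(I;H^1_0(\Omega))$ with $u_{\sigma'}(q_{\sigma'})(T)$ bounded in $L^2(\Omega)$, so after a further subsequence $u_{\sigma'}(q_{\sigma'})\rightharpoonup\tilde u$ and $u_{\sigma'}(q_{\sigma'})(T)\rightharpoonup\tilde u_T$ weakly. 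To identify these limits I would test the discrete state equation against the fully discrete backward solution $z_\sigma$ of \eqref{backward_PDE_sigma} with arbitrary data $(g,z_T)$ and use the dual representation to produce the discrete analogue of \eqref{identity_2}. Passing to the limit requires $\langle q_{\sigma'},z_{\sigma'}\rangle_{\bar I_c\times\omega}\to\langle\tilde q,z\rangle_{\bar I_c\times\omega}$, which I split as $\langle q_{\sigma'},z_{\sigma'}-z\rangle_{\bar I_c\times\omega}+\langle q_{\sigma'},z\rangle_{\bar I_c\times\omega}$. The first piece is controlled by $\|q_{\sigma'}\|_{\mathcal{M}(\bar I_c;L^2(\omega))}\,\|z_{\sigma'}-z\|_{C(\bar I_c;L^2(\omega))}$ and vanishes thanks to the adjoint error estimate of Theorem~\ref{estimate_adjoint}, while the second converges by the weak-$*$ convergence of $q_{\sigma'}$, with $z\in C(\bar I_c;L^2(\omega))$ an admissible test element. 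Since the remaining terms converge because $z_{\sigma'}\to z$ in $C(\bar I;L^2(\Omega))$, the limiting relation is exactly \eqref{identity_2} for the control $\tilde q$, so $\tilde u=u(\tilde q)$ and $\tilde u_T=u(\tilde q)(T)$.

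With the weak convergence in hand I would close the identification by lower semicontinuity. Weak lower semicontinuity of the tracking norms together with weak-$*$ lower semicontinuity of $\|\cdot\|_{\mathcal{M}(\bar I_c;L^2(\omega))}$ gives $J(\tilde q)\le\liminf_{|\sigma'|\to0}J_{\sigma'}(q_{\sigma'})\le\limsup_{|\sigma|\to0}J_\sigma(q_\sigma)\le J(\bar q)$; since $\bar q$ is the unique minimizer of $J$ by Theorem~\ref{Thm:existence_OCP}, this forces $\tilde q=\bar q$ and $\lim_{|\sigma|\to0}J_\sigma(q_\sigma)=J(\bar q)$. Because every subsequence then admits a further subsequence converging weak-$*$ to the same limit $\bar q$, the whole family converges, proving \eqref{plain_conv_1} and \eqref{plain_conv_4}. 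For the norm convergence \eqref{plain_conv_2} I would invoke the elementary fact that if $a_\sigma+b_\sigma\to a+b$ with $\liminf a_\sigma\ge a$ and $\liminf b_\sigma\ge b$, then $a_\sigma\to a$ and $b_\sigma\to b$; applying this to the splitting $J_\sigma(q_\sigma)=j_{\sigma,1}(q_\sigma)+\alpha\|q_\sigma\|_{\mathcal{M}(\bar I_c;L^2(\omega))}$ yields simultaneously $\alpha\|q_\sigma\|_{\mathcal{M}(\bar I_c;L^2(\omega))}\to\alpha\|\bar q\|_{\mathcal{M}(\bar I_c;L^2(\omega))}$ and $j_{\sigma,1}(q_\sigma)\to J_1(\bar q)$.

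Finally, for the strong state convergence \eqref{plain_conv_3} I would upgrade the weak convergence using the convergence of the tracking functional just obtained: combining $u_\sigma(q_\sigma)-u_d\rightharpoonup\bar u-u_d$ in $L^2(I;L^2(\Omega))$ with convergence of the corresponding norm gives strong convergence $u_\sigma(q_\sigma)\to\bar u$ in $L^2(I;L^2(\Omega))$. For the terminal-time term, when $\beta>0$ the same weak-plus-norm argument applies directly to $u_\sigma(q_\sigma)(T)$; for $\beta=0$ I would instead write $u_\sigma(q_\sigma)-\bar u=u_\sigma(q_\sigma-\bar q)+(u_\sigma(\bar q)-\bar u)$, note that the second term vanishes at $T$ by Theorem~\ref{Thm:state_error_1}, and control $u_\sigma(q_\sigma-\bar q)(T)$ by a discrete smoothing estimate analogous to the local bound \eqref{local_estimate}: since $q_\sigma-\bar q$ is supported in $\bar I_c\subset\subset I$, the discrete state is source-free on $(\hat t,T)$, so its value at $T$ is bounded by its $L^2((\hat t,T);L^2(\Omega))$ norm, which tends to zero because $u_\sigma(q_\sigma-\bar q)\to0$ in $L^2(I;L^2(\Omega))$. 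I expect the two most delicate points to be the passage to the limit in the bilinear coupling $\langle q_{\sigma'},z_{\sigma'}\rangle_{\bar I_c\times\omega}$, where the weak-$*$ convergence of the measures must be paired with the strong convergence of the discrete adjoints, and the discrete terminal-time smoothing estimate needed in the case $\beta=0$.
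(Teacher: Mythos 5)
Your overall skeleton --- competitor bound via $\bar q$, Banach--Alaoglu extraction, identification of the weak-$*$ limit, lower semicontinuity plus uniqueness, subsequence principle --- is the same as the paper's. Where you genuinely differ is the identification step: you pass to the limit in the discrete duality identity obtained by testing \eqref{discrete_state} with the discrete backward solution of \eqref{backward_PDE_sigma}, pairing the weak-$*$ convergent controls with the strongly convergent discrete adjoints (Theorem \ref{estimate_adjoint}) to recover \eqref{identity_2} for the limit control. The paper instead splits $u_{\tilde q}-\bar u_\sigma$ through the intermediate continuous state $u_{q_\sigma}$, applying Proposition \ref{Thm:continuity} to one piece and Theorem \ref{Thm:state_error_1} with $q=q_\sigma$ to the other. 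Your route is arguably more robust: Theorem \ref{Thm:state_error_1} carries a constant proportional to $\|q_\sigma\|_{\mathcal{M}(\bar I_c;H^1(\omega))}$, and a uniform bound on that quantity for the discrete optimal controls is never established in the paper, whereas your duality argument only needs the $\mathcal{M}(\bar I_c;L^2(\omega))$ bound. Your liminf-splitting plus Radon--Riesz treatment of \eqref{plain_conv_2} and of the $L^2(I;L^2(\Omega))$ part of \eqref{plain_conv_3} (and of the terminal term when $\beta>0$) is also correct and self-contained.

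The genuine gap is the terminal convergence in \eqref{plain_conv_3} when $\beta=0$. The ``discrete smoothing estimate analogous to \eqref{local_estimate}'' that you invoke --- $\|v_\sigma(T)\|_{L^2(\Omega)}\le C\|v_\sigma\|_{L^2((\hat t,T);L^2(\Omega))}$ for discrete solutions that are source-free on $(\hat t,T)$ --- is false for this Petrov--Galerkin scheme unless the time step is strongly restricted. Two features of \eqref{Crank_Nicolson} conspire against it: the Crank--Nicolson amplification factor $R(\tau\lambda)=(1-\tau\lambda/2)/(1+\tau\lambda/2)$ satisfies $|R(\tau\lambda)|\to 1$ as $\tau\lambda\to\infty$, so high spatial frequencies are not damped across $(\hat t,T)$; and the terminal value is produced by an \emph{explicit} half-step, $u_\sigma(T)=\bigl(I-\tfrac{\tau_M}{2}(-\Delta_h)\bigr)u_{\tau,M}+\cdots$, whose $L^2\to L^2$ norm is of order $\tau h^{-2}$. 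Concretely (take $\omega=\Omega$ for simplicity), let $q=\delta_{t_k}\otimes\phi_h$ with $\phi_h$ a discrete eigenfunction of $-\Delta_h$ of eigenvalue $\lambda_h\sim h^{-2}$, and let $\tau\sim h^{1/2}$, which is admissible since the theorem only assumes $|\sigma|\to 0$. The impulse enters through $(I+\tfrac{\tau}{2}(-\Delta_h))^{-1}$, so the discrete state has size $\approx 2/(\tau\lambda_h)=O(h^{3/2})$ on $(t_k,T)$, giving $\|u_\sigma\|_{L^2(I;L^2(\Omega))}=O(h^{3/2})\to 0$; but the final explicit step undoes this damping, $u_\sigma(T)=R(\tau\lambda_h)^{M-k}\phi_h$, and $|R(\tau\lambda_h)|^{M-k}\approx\exp\bigl(-4(T-t_k)h^2/\tau^2\bigr)\to 1$. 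Thus the ratio in your claimed estimate blows up, and, more importantly, this shows that \emph{no} argument using only $q_\sigma-\bar q\stackrel{*}{\rightharpoonup}0$ together with $\|u_\sigma(q_\sigma)-\bar u\|_{L^2(I;L^2(\Omega))}\to 0$ can yield the terminal convergence. Closing this case requires additional structure: either a coupling $\tau\lesssim h^2$ (under which your smoothing bound does hold, since then $\tau\lambda_h=O(1)$), or uniform spatial regularity of the discrete optimal controls --- a discrete analogue of Theorem \ref{Theorem3-5} giving a bound on $\|q_\sigma\|_{\mathcal{M}(\bar I_c;H^1(\omega))}$ --- which is also what would be needed to make the paper's own application of Theorem \ref{Thm:state_error_1} with $q=q_\sigma$ fully rigorous. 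The gap is thus in your proof (the theorem itself is not contradicted), but it cannot be repaired by the smoothing argument as you stated it.
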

	\begin{proof}
		The main ideas follow from  \cite[Theorem 4.9]{CasasClasonKunisch-2013} and \cite[Theorem 1.2]{HerbergHinzeSchumacher2020}, see also \cite[Theorem 3.5]{CasasClasonKunisch-2012}. Similar to Theorem \ref{Theorem3-5}, since $q_\sigma$ is optimal, we can easily show that the sequence $\{q_\sigma\}$ is uniformly bounded in $\mathcal{M}(\bar I_c;L^2(\omega))$ with respect to $\sigma$. %  , so there have
		%\begin{eqnarray}
		%	\alpha\|q_\sigma\|_{\mathcal{M}(\bar I_c;L^2(\omega))}\leq J_\sigma(q_\sigma)\leq J_\sigma(0)=\frac{1}{2}\|u_\sigma(0)-u_d\|_{L^2(I;L^2(\Omega))}^2
		%	+\frac{\beta}{2}\|u_\sigma(0)(\cdot,T)-u_T\|_{L^2(\Omega)}^2,\nonumber
		%\end{eqnarray}
		%where $u_\sigma(0)\in Y_\sigma$ is the solution to problem (\ref{state_h}) with $q$ substituted by $0$. 
		%It is clear that $u_\sigma(0)$ is independent of $q$ and thus the sequence $\{q_\sigma\}$ is uniformly bounded in $\mathcal{M}(\bar I_c;L^2(\omega))$. 
		Then there exists a subsequence, still denoted by $\{ q_\sigma\}$, such that $q_\sigma\stackrel{*}{\rightharpoonup} \tilde q$ in $\mathcal{M}(\bar I_c;L^2(\omega))$ for some $\tilde q$ when $\sigma\rightarrow 0^+$. Below, we show that $\tilde q=\bar q$.
		
		Let $u_{\tilde{q}}$ be the solution of equation \eqref{PDE_state} with $q$ replaced by $\tilde q$. We first show that
		\begin{eqnarray}
			\|u_{\tilde q}-\bar u_\sigma\|_{L^2(I;L^2(\Omega))} +\|( u_{\tilde{q}}-\bar u_\sigma)(T)\|_{L^2(\Omega)}\rightarrow 0\quad \mbox{as}\ |\sigma|\rightarrow 0.\label{plain:aux_1}
		\end{eqnarray} 
		In fact, from the triangle inequality we have
		\begin{eqnarray}
			\|u_{\tilde q}-\bar u_\sigma\|_{L^2(I;L^2(\Omega))} +\|( u_{\tilde{q}}-\bar u_\sigma)(T)\|_{L^2(\Omega)}&\le& \|u_{\tilde q}- u_{ q_\sigma}\|_{L^2(I;L^2(\Omega))}+\|(u_{\tilde q}- u_{ q_\sigma})(T)\|_{L^2(\Omega)}
			\nonumber\\
			&+&
			\|u_{ q_\sigma}-\bar u_\sigma\|_{L^2(I;L^2(\Omega))}+\|(u_{ q_\sigma}-\bar u_\sigma)(T)\|_{L^2(\Omega)}.
		\end{eqnarray} 
		Applying Proposition \ref{Thm:continuity} to the first two terms and Theorem \ref{Thm:state_error_1} to the latter two terms on the right-hand side of the above estimate yields the result. %A similar approach also gives the result
		%\begin{eqnarray}
		%	\|(\bar u_\sigma-u(\tilde q))(T)\|_{L^2(\Omega)}\rightarrow 0\quad \mbox{as}\ |\sigma|\rightarrow 0.\label{plain:aux_2}
		%\end{eqnarray}
		
		Next, there holds
		\begin{eqnarray}
			j(\tilde q)\leq \liminf\limits_{|\sigma|\rightarrow 0}j_\sigma(q_\sigma)\leq \limsup\limits_{|\sigma|\rightarrow 0}j_\sigma(q_\sigma)\leq \limsup\limits_{|\sigma|\rightarrow 0}j_\sigma(\Lambda_\sigma \bar{q})=j(\bar{q}),\label{plain:aux_3}
		\end{eqnarray}
		where in the first inequality we have used the weakly-$^*$ lower semicontinuity of the cost functional $j$,
		%the norm in $\mathcal{M}(\bar I_c;L^2(\omega))$, (\ref{plain:aux_1}) and (\ref{plain:aux_2}), 
		in the third inequality we used the optimality of $q_\sigma$, while in the last equality we have used (\ref{inter_8}) and \eqref{plain:aux_1}. 
		%and again (\ref{plain:aux_1}) and (\ref{plain:aux_2}).
		Therefore, $\tilde q$ is also optimal, so $\tilde q=\bar q$ since $\bar q$ is unique, i.e., $q_\sigma\stackrel{*}{\rightharpoonup} \bar q\in \mathcal{M}(\bar{I}_c;L^2(\omega))$, then $\bar u=u_{\tilde{q}}$ by the unique solvability of the state equation, which proves (\ref{plain_conv_1}) and  (\ref{plain_conv_3}).  Obviously, (\ref{plain_conv_4}) can be concluded from (\ref{plain:aux_3}). Lastly, (\ref{plain_conv_2}) follows from (\ref{plain_conv_3}) and (\ref{plain_conv_4}).
	\end{proof}
	
	A second convergence result concerns the convergence order of the objective functional.
	\begin{theorem}\label{Thm:obj_error}
		Let $q_\sigma\in \{\hat{q}_{\sigma}\}\subseteq \mathcal{M}(\bar{I}_c;L^2(\omega))$ be any optimal control to the discrete  problem (\ref{min-J-h}) and $\bar{q}\in \mathcal{M}(\bar I_c;L^2(\omega))$ be the unique optimal control for the continuous  problem (\ref{min-J}). Then there exists a constant $C>0$, independent of $\sigma$, such that
		\begin{eqnarray}
			|j(\bar{q})-j_\sigma(q_\sigma)|\leq C(h+\tau^{1\over 2}).\label{estimate_J}
		\end{eqnarray}
	\end{theorem}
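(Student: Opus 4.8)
The plan is to prove \eqref{estimate_J} by establishing the two one-sided bounds $j_\sigma(q_\sigma)\le j(\bar q)+C(h+\tau^{\frac12})$ and $j(\bar q)\le j_\sigma(q_\sigma)+C(h+\tau^{\frac12})$ separately, each obtained by exploiting optimality on the corresponding level and then absorbing the mismatch into a state-approximation error. In both directions the regularization terms $\alpha\|\cdot\|_{\mathcal{M}(\bar I_c;L^2(\omega))}$ will either cancel or be compared via \eqref{inter_7}, so that the real work concerns the quadratic tracking parts $J_1$ and $j_{\sigma,1}$ from \eqref{reduced_j} and \eqref{discret_optimal_reduced}. Two global facts will be used repeatedly: by Theorem \ref{Thm:plain_convergence} the family $\{q_\sigma\}$ is uniformly bounded in $\mathcal{M}(\bar I_c;L^2(\omega))$, and by Theorem \ref{Theorem3-5} the continuous optimum satisfies $\bar q\in\mathcal{M}(\bar I_c;H^1(\omega))$ with a data-controlled norm.

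For the upper bound I would test the discrete optimality of $q_\sigma$ with the admissible competitor $\Lambda_\sigma\bar q\in U_\sigma$, giving $j_\sigma(q_\sigma)\le j_\sigma(\Lambda_\sigma\bar q)$. Since $\langle\Lambda_\sigma\bar q,v_\sigma\rangle_{\bar I_c\times\omega}=\langle\bar q,v_\sigma\rangle_{\bar I_c\times\omega}$ for every $v_\sigma\in P_\sigma$ by \eqref{inter_9} and \eqref{inter_5}, the two discrete states coincide, $u_\sigma(\Lambda_\sigma\bar q)=u_\sigma(\bar q)$, whence $j_{\sigma,1}(\Lambda_\sigma\bar q)=j_{\sigma,1}(\bar q)$; moreover $\|\Lambda_\sigma\bar q\|_{\mathcal{M}(\bar I_c;L^2(\omega))}\le\|\bar q\|_{\mathcal{M}(\bar I_c;L^2(\omega))}$ by \eqref{inter_7}. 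Using the elementary identity $\tfrac12\|a\|^2-\tfrac12\|b\|^2=\tfrac12(a-b,a+b)$ together with the uniform $L^2$-stability of the continuous and discrete states (Theorem \ref{Thm:existence_state} and Lemma \ref{Lm:state_h_stability}), I would bound $|j_{\sigma,1}(\bar q)-J_1(\bar q)|$ by $C(\|\bar u-u_\sigma(\bar q)\|_{L^2(I;L^2(\Omega))}+\|(\bar u-u_\sigma(\bar q))(T)\|_{L^2(\Omega)})$, which is $O(h+\tau^{\frac12})$ by the estimate \eqref{estimate_u} of Theorem \ref{Thm:state_error_1}, applicable precisely because $\bar q\in\mathcal{M}(\bar I_c;H^1(\omega))$. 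This yields $j_\sigma(q_\sigma)\le j(\bar q)+C(h+\tau^{\frac12})$.

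For the lower bound I would use that $q_\sigma$, viewed as a continuous control, is admissible for \eqref{min-J}, so optimality of $\bar q$ gives $j(\bar q)\le j(q_\sigma)$; the regularization terms being identical, $j(\bar q)-j_\sigma(q_\sigma)\le J_1(q_\sigma)-j_{\sigma,1}(q_\sigma)$, and the same algebraic identity reduces matters to controlling $\|u(q_\sigma)-u_\sigma(q_\sigma)\|_{L^2(I;L^2(\Omega))}+\|(u(q_\sigma)-u_\sigma(q_\sigma))(T)\|_{L^2(\Omega)}$ with uniformly bounded prefactors. The distributed part I would treat by duality: for $g\in L^2(I;L^2(\Omega))$ let $z$ solve \eqref{backward_PDE} with terminal value $0$ and $z_\sigma\in P_\sigma$ its discretization \eqref{backward_PDE_sigma}. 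Applying \eqref{identity_2} to the continuous state (with $z_T=0$) and testing the discrete state equation \eqref{discrete_state} with $z_\sigma\in P_\sigma$ and the discrete adjoint equation \eqref{backward_PDE_sigma} with $u_\sigma(q_\sigma)\in Y_\sigma$, the error representation collapses to $(g,u(q_\sigma)-u_\sigma(q_\sigma))_{L^2(I;L^2(\Omega))}=\int_I(f,z-z_\sigma)\,dt+\langle q_\sigma,z-z_\sigma\rangle_{\bar I_c\times\omega}+(u_0,(z-z_\sigma)(0))$. Since $z_T=0\in H^1_0(\Omega)$, Theorem \ref{estimate_adjoint} gives $\|z-z_\sigma\|_{C(\bar I;L^2(\Omega))}\le C(h+\tau^{\frac12})\|g\|_{L^2(I;L^2(\Omega))}$, and combined with the uniform $\mathcal{M}(\bar I_c;L^2(\omega))$-bound on $q_\sigma$ this yields $\|u(q_\sigma)-u_\sigma(q_\sigma)\|_{L^2(I;L^2(\Omega))}\le C(h+\tau^{\frac12})$ upon taking the supremum over $g$; crucially, no $H^1(\omega)$-regularity of $q_\sigma$ is needed here because the dual problem carries the regularity.

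The main obstacle is the terminal observation term $\|(u(q_\sigma)-u_\sigma(q_\sigma))(T)\|_{L^2(\Omega)}$: the naive duality for it would require an adjoint with only $L^2(\Omega)$ terminal data, whereas Theorem \ref{estimate_adjoint} demands $z_T\in H^1_0(\Omega)$, and $q_\sigma$ supplies no uniform $H^1(\omega)$-bound to compensate. I would circumvent this by the interior-in-time smoothing: since $I_c\subset\subset I$, no control acts on $(\hat t,T)$, so by the local estimate \eqref{local_estimate} of Theorem \ref{Thm:existence_state} the continuous state $u(q_\sigma)$ is $H^1_0(\Omega)$-valued and uniformly bounded near $T$, and together with a discrete counterpart of \eqref{local_estimate} near $T$ and the standing hypotheses $f\in L^2(I;H^1(\Omega))$, $u_T\in H^1_0(\Omega)$, the terminal error can be reduced to the already-controlled distributed error and thereby bounded by $C(h+\tau^{\frac12})$. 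Adding the two one-sided inequalities then gives \eqref{estimate_J}; the points requiring care are this terminal-smoothing reduction and the verification that all prefactors are bounded uniformly in $\sigma$.
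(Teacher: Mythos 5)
Your overall skeleton is the same sandwich the paper uses: the two optimality inequalities $j_\sigma(q_\sigma)\le j_\sigma(\,\cdot\,)$ and $j(\bar q)\le j(q_\sigma)$ reduce everything to comparing $j$ and $j_\sigma$ at a fixed control. Your upper-bound half is correct and essentially the paper's argument (the detour through $\Lambda_\sigma\bar q$ changes nothing, since $u_\sigma(\Lambda_\sigma\bar q)=u_\sigma(\bar q)$ by \eqref{inter_9}, \eqref{inter_5}, and \eqref{inter_7} only helps); it legitimately invokes Theorem \ref{Thm:state_error_1} because $\bar q\in\mathcal{M}(\bar I_c;H^1(\omega))$ by Theorem \ref{Theorem3-5}. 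In the lower-bound half you part ways with the paper: the paper simply applies Theorem \ref{Thm:state_error_1} with $\tilde q=q_\sigma$ for \emph{arbitrary} $\tilde q\in\mathcal{M}(\bar I_c;L^2(\omega))$, which strictly speaking presupposes a uniform bound on $\|q_\sigma\|_{\mathcal{M}(\bar I_c;H^1(\omega))}$ that is never established, whereas your duality argument, based on the exact error representation $(g,u(q_\sigma)-u_\sigma(q_\sigma))_{L^2(I;L^2(\Omega))}=\int_I(f,z-z_\sigma)\,dt+\langle q_\sigma,z-z_\sigma\rangle_{\bar I_c\times\omega}+(u_0,(z-z_\sigma)(0))$ with $z_T=0$ and on Theorem \ref{estimate_adjoint}, needs only the uniform measure-norm bound on $q_\sigma$. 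For the $L^2(I;L^2(\Omega))$ part of the tracking term this is correct and in fact cleaner than the paper's step.

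The gap is the terminal term. When $\beta>0$ you must control $\|(u(q_\sigma)-u_\sigma(q_\sigma))(T)\|_{L^2(\Omega)}$, and your argument for it rests on ``a discrete counterpart of \eqref{local_estimate} near $T$,'' which exists nowhere in the paper and is not supplied by you. The continuous estimate \eqref{local_estimate} is obtained by multiplying by a temporal cut-off and invoking parabolic regularity; the fully discrete Petrov--Galerkin solution does not localize in time this way (the scheme \eqref{Crank_Nicolson} couples neighboring slabs, $u_\sigma$ is only piecewise constant in time, and no discrete smoothing estimate is proved anywhere in the paper). The alternative reduction you sketch, propagating the already-controlled distributed error from an interior time $t^*$ to $T$, likewise needs a discrete stability/propagation estimate and a choice of $t^*$ compatible with the time mesh, none of which is routine; and a direct duality attack on the terminal value fails because it would require Theorem \ref{estimate_adjoint} with terminal data only in $L^2(\Omega)$, while that theorem demands $z_T\in H^1_0(\Omega)$. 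So, as written, your proof establishes \eqref{estimate_J} only for $\beta=0$; for $\beta>0$ the terminal-term step is a genuine missing piece --- interestingly, exactly the point that the paper's own proof glosses over by applying Theorem \ref{Thm:state_error_1} to $q_\sigma$ without the regularity its hypotheses demand.
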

	\begin{proof}
		The proof uses the approach of \cite[Theorem 5.1]{CasasClasonKunisch-2013}, see also \cite[Theorem 4.1]{CasasClasonKunisch-2012}. It follows from the optimality of $\bar{q}$ and $q_\sigma$ that
		\begin{eqnarray}
			j(\bar q)-j_\sigma(\bar q)\leq j(\bar q)-j_\sigma(q_\sigma)\leq j(q_\sigma)-j_\sigma(q_\sigma),\nonumber
		\end{eqnarray}	
		which means that
		\begin{eqnarray}
			|j(\bar q)-j_\sigma(q_\sigma)|\leq\max\{|j(\bar q)-j_\sigma(\bar q)|,  |j(q_\sigma)-j_\sigma(q_\sigma)|\}.\label{Thm:obj_auxi_1}
		\end{eqnarray}
		Now it remains to estimate the two terms on the right-hand side.
		
		For arbitrary $\tilde q\in \mathcal{M}(\bar I_c;L^2(\omega))$, we denote by $u_{\tilde q}$ and $u_\sigma(\tilde q)$ the unique solutions to problems (\ref{PDE_state}) and (\ref{state_h}), respectively. Then we obtain from Theorem \ref{Thm:state_error_1} that
		\begin{eqnarray}
			\|u_{\tilde q}-u_\sigma(\tilde q)\|_{L^2(I;L^2(\Omega))}+\|(u_{\tilde q}-u_\sigma(\tilde q))(T)\|_{L^2(\Omega)}\leq C(h+\tau^{1\over 2}).\nonumber
		\end{eqnarray}
		It is straightforward to show that
		\begin{eqnarray}
			|j(\tilde q)-j_\sigma(\tilde q)|&\leq& {1\over 2}\Big|\|u_{\tilde q}-u_d\|_{L^2(I;L^2(\Omega))}^2- \|u_\sigma(\tilde q)-u_d\|_{L^2(I;L^2(\Omega))}^2\Big|\nonumber\\
			&&+ {\beta\over 2}\Big|\|u_{\tilde q}(T)-u_T\|_{L^2(\Omega)}^2-\|u_\sigma(\tilde q)(T)-u_T\|_{L^2(\Omega)}^2\Big|\nonumber\\
			&\leq&C(\|u_{\tilde q}\|_{L^2(I;L^2(\Omega))}+\|u_\sigma(\tilde q)\|_{L^2(I;L^2(\Omega))}+\|u_d\|_{L^2(I;L^2(\Omega))})\|u_{\tilde q}-u_\sigma(\tilde q)\|_{L^2(I;L^2(\Omega))}\nonumber\\
			&&+C(\|u_{\tilde q}(T)\|_{L^2(\Omega)}+\|u_\sigma(\tilde q)(T)\|_{L^2(\Omega)}+\|u_T\|_{L^2(\Omega)})\|(u_{\tilde q}-u_\sigma(\tilde q))(T)\|_{L^2(\Omega)}\nonumber\\
			&\leq&C(h+\tau^{1\over 2}),\nonumber
		\end{eqnarray}
		where we have used Lemma \ref{Lm:state_h_stability} and Theorem \ref{Thm:existence_state}. By setting $\tilde q=q$ and $\tilde q=q_\sigma$ in the above error estimate we finish the proof by considering (\ref{Thm:obj_auxi_1}).
	\end{proof}
	
	The last convergence result is about the approximation of the state equation.
	\begin{theorem}\label{Thm:state_error_2}
		Let $\bar u\in L^2(I;L^2(\Omega))$ be the optimal state of the continuous optimal control problem (\ref{min-J}) and $\bar u_\sigma\in Y_\sigma$ be the discrete optimal state of the discrete optimization problem (\ref{min-J-h}). Then there exists a constant $C$, independent of $\sigma$, such that
		\begin{eqnarray}
			\|\bar u-\bar u_\sigma\|_{L^2(I;L^2(\Omega))}^2+\beta\|(\bar u-\bar u_\sigma)(T)\|_{L^2(\Omega)}^2\leq C(h+\tau^{1\over 2}).\label{estimate_state}
		\end{eqnarray}
	\end{theorem}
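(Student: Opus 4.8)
The plan is to route the error through the continuous state $u_{q_\sigma}$ generated by the discrete optimal control $q_\sigma$ (so that $\bar u_\sigma=u_\sigma(q_\sigma)$), writing
\begin{equation*}
\bar u-\bar u_\sigma=(\bar u-u_{q_\sigma})+(u_{q_\sigma}-\bar u_\sigma),
\end{equation*}
and controlling the two pieces by different mechanisms: the optimization error $\bar u-u_{q_\sigma}$ by the strong convexity of the reduced functional combined with the objective-error bound of Theorem \ref{Thm:obj_error}, and the discretization error $u_{q_\sigma}-\bar u_\sigma$ by the a priori state estimate of Theorem \ref{Thm:state_error_1} evaluated at $q_\sigma$. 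Since $\|\bar u-\bar u_\sigma\|^2\le 2\|\bar u-u_{q_\sigma}\|^2+2\|u_{q_\sigma}-\bar u_\sigma\|^2$, and likewise at $t=T$ with weight $\beta$, it suffices to show each squared piece is $O(h+\tau^{1/2})$.

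For the first piece I would exploit that $j=J_1+J_2$ with $J_1$ \emph{exactly} quadratic (because $S$ is affine) and $J_2=\alpha\|\cdot\|_{\mathcal M(\bar I_c;L^2(\omega))}$ convex. Optimality of $\bar q$ gives $0\in J_1'(\bar q)+\mathscr\partial J_2(\bar q)$, hence $J_1'(\bar q)(q_\sigma-\bar q)+J_2(q_\sigma)-J_2(\bar q)\ge 0$; substituting this into the exact second-order expansion of the quadratic $J_1$ about $\bar q$, and using that $J_1''(q_\sigma-\bar q,q_\sigma-\bar q)=\|u_{q_\sigma}-\bar u\|_{L^2(I;L^2(\Omega))}^2+\beta\|(u_{q_\sigma}-\bar u)(T)\|_{L^2(\Omega)}^2$, yields the coercivity estimate
\begin{equation*}
\tfrac12\big(\|u_{q_\sigma}-\bar u\|_{L^2(I;L^2(\Omega))}^2+\beta\|(u_{q_\sigma}-\bar u)(T)\|_{L^2(\Omega)}^2\big)\le j(q_\sigma)-j(\bar q).
\end{equation*}
The nonnegative gap is then split as $j(q_\sigma)-j(\bar q)=[j(q_\sigma)-j_\sigma(q_\sigma)]+[j_\sigma(q_\sigma)-j(\bar q)]$: the second bracket is $O(h+\tau^{1/2})$ by Theorem \ref{Thm:obj_error}, while the first bracket, being the difference of the continuous and discrete cost at the same control, is $O(h+\tau^{1/2})$ by the very computation already carried out in the proof of Theorem \ref{Thm:obj_error} with $\tilde q=q_\sigma$. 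Thus $j(q_\sigma)-j(\bar q)\le C(h+\tau^{1/2})$ and the first piece is controlled.

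For the second piece I would apply Theorem \ref{Thm:state_error_1} directly at $q_\sigma$ to obtain $\|u_{q_\sigma}-\bar u_\sigma\|_{L^2(I;L^2(\Omega))}+\|(u_{q_\sigma}-\bar u_\sigma)(T)\|_{L^2(\Omega)}\le C(h+\tau^{1/2})$, whose square is $O((h+\tau^{1/2})^2)\le C(h+\tau^{1/2})$; adding the two pieces then closes the proof. The main obstacle is precisely the rigor of this last application: Theorem \ref{Thm:state_error_1} requires $q_\sigma\in\mathcal M(\bar I_c;H^1(\omega))$ and its constant carries the factor $\|q_\sigma\|_{\mathcal M(\bar I_c;H^1(\omega))}$, so the argument hinges on a uniform-in-$\sigma$ bound for $\|q_\sigma\|_{\mathcal M(\bar I_c;H^1(\omega))}$, a discrete analogue of the regularity $\bar q\in\mathcal M(\bar I_c;H^1(\omega))$ established in Theorem \ref{Theorem3-5}. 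I would derive it from the discrete sparsity relation $q_\sigma'=-\alpha^{-1}\bar\varphi_\sigma$ together with a uniform $H^1(\omega)$-stability bound for the discrete adjoint $\bar\varphi_\sigma$, the delicate step being to control the spatial $H^1$-norm of the piecewise-linear adjoint independently of $h$.

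An equivalent route, should the above coercivity framing be awkward, is to add the continuous and discrete first-order inequalities \eqref{optimal_condition_2} and \eqref{disctere_condition_2}, each tested against the other problem's optimal control; this reduces $\|u_{q_\sigma}-\bar u\|^2+\beta\|(u_{q_\sigma}-\bar u)(T)\|^2$ to the pairing $\langle\bar q-q_\sigma,\bar\varphi_\sigma-\varphi_{q_\sigma}\rangle_{\bar I_c\times\omega}$, where $\varphi_{q_\sigma}$ denotes the continuous adjoint associated with $u_{q_\sigma}$, and one then bounds this by $\|\bar q-q_\sigma\|_{\mathcal M(\bar I_c;L^2(\omega))}$ times the adjoint discretization error in $C(\bar I_c;L^2(\omega))$ (estimated via Theorem \ref{estimate_adjoint} and the discrete stability of Lemma \ref{Lm:adjoint_stability}); this variant meets the same underlying need to control $\|u_{q_\sigma}-\bar u_\sigma\|$ and thus confronts the identical obstacle.
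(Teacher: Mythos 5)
Your route is genuinely different from the paper's, but it has a gap that, as written, it cannot close: \emph{both} halves of your decomposition, not only the second, require the a priori estimate of Theorem \ref{Thm:state_error_1} evaluated at the discrete optimal control $q_\sigma$. For the second piece this is explicit. For the first piece it is hidden in the bracket $j(q_\sigma)-j_\sigma(q_\sigma)$: the statement of Theorem \ref{Thm:obj_error} only bounds $|j(\bar q)-j_\sigma(q_\sigma)|$, so you must re-run the computation from its proof with $\tilde q=q_\sigma$, and that computation is precisely an application of Theorem \ref{Thm:state_error_1} at $q_\sigma$, whose constant carries the factor $\|q_\sigma\|_{\mathcal{M}(\bar I_c;H^1(\omega))}$. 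Everything therefore hinges on the uniform-in-$\sigma$ bound for $\|q_\sigma\|_{\mathcal{M}(\bar I_c;H^1(\omega))}$ that you flag as the "main obstacle", and your proposed derivation of it is circular: from the discrete sparsity relation $q_\sigma'=-\alpha^{-1}\bar\varphi_\sigma$ you would need $\|\bar\varphi_\sigma\|_{C(\bar I_c;H^1(\omega))}\le C$ uniformly; the natural tool, a fully discrete analogue of \eqref{adjoint_k_stability_2}, needs the terminal datum $\beta(\bar u_\sigma(T)-u_T)$ uniformly bounded in $H^1_0(\Omega)$, i.e., a uniform bound on $\|\nabla\bar u_\sigma(T)\|_{L^2(\Omega)}$; but by \eqref{state_k_stability_2} such a bound again requires a uniform $\mathcal{M}(\bar I_c;H^1(\omega))$ bound on the control. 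Breaking this circle would require discrete interior-in-time regularity estimates (exploiting ${\rm dist}(\bar I_c,\{T\})>0$), which are not available in the paper.

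The paper's proof avoids the obstacle entirely by a different choice of auxiliary variables. It starts from the same symmetrized variational inequalities (your "equivalent route": $0\le\langle \hat q_\sigma-\bar q,\bar\varphi-\bar\varphi_\sigma\rangle_{\bar I_c\times\omega}$), but instead of inserting the \emph{continuous} adjoint $\varphi_{q_\sigma}$ it inserts $\hat\varphi_\sigma$, the \emph{discrete} adjoint driven by the continuous optimal state $\bar u$, together with $\hat u_\sigma$, the \emph{discrete} state driven by $\bar q$. The discrete duality identity $\langle\hat q_\sigma-\bar q,\hat\varphi_\sigma-\bar\varphi_\sigma\rangle_{\bar I_c\times\omega}=A(\bar u_\sigma-\hat u_\sigma,\hat\varphi_\sigma-\bar\varphi_\sigma)$ then produces the target quantity $\|\bar u-\bar u_\sigma\|_{L^2(I;L^2(\Omega))}^2+\beta\|(\bar u-\bar u_\sigma)(T)\|_{L^2(\Omega)}^2$ with a negative sign directly (up to a cross term in $\bar u-\hat u_\sigma$), so the only discretization errors ever needed are Theorem \ref{Thm:state_error_1} at $\bar q$ --- legitimate because $\bar q\in\mathcal{M}(\bar I_c;H^1(\omega))$ by Theorem \ref{Theorem3-5} --- and Theorem \ref{estimate_adjoint} at the fixed data $(\bar u-u_d,\beta(\bar u(T)-u_T))$, while the discrete control enters only through $\|\hat q_\sigma-\bar q\|_{\mathcal{M}(\bar I_c;L^2(\omega))}$, which is uniformly bounded by testing optimality against $q=0$. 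In short: replacing "continuous solutions at discrete data" by "discrete solutions at continuous data" is the single structural change that removes any need for regularity of $q_\sigma$ and closes the proof; your proposal, in either of its two variants, cannot be completed without supplying the missing uniform $\mathcal{M}(\bar I_c;H^1(\omega))$ bound.
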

	\begin{proof}
		In order to obtain the above estimate \eqref{estimate_state}, we first introduce two auxiliary variables. The first one is the finite element approximation to the state equation \eqref{PDE_state} with the optimal control $\bar{q}$: Find $\hat u_\sigma\in Y_\sigma$ such that
		\begin{equation}
			A(\hat u_\sigma,v_\sigma)=\int_I(f,v_\sigma)dt+\langle \bar q,v_\sigma\rangle_{\bar I_c\times \omega}+(u_0,v_\sigma(0))\quad\ \forall v_\sigma\in P_\sigma,\nonumber
		\end{equation}
		while the second is the finite element approximation to the adjoint equation \eqref{adjoint_equation} with the optimal state $\bar u$: Find $\hat\varphi\in P_\sigma$ such that
		\begin{equation}
			A(\omega_\sigma,\hat \varphi_\sigma)=\int_I(\bar u-u_d,\omega_\sigma)dt+\beta(\bar u(T)-u_T,\omega_\sigma(T))\quad\  \forall \omega_\sigma\in Y_\sigma.\nonumber
		\end{equation}
		
		Taking $p=\hat q_\sigma$ in the continuous optimality condition \eqref{optimal_condition_2} and $p=\bar q$ in the discrete optimality condition \eqref{disctere_condition_2}, where $\hat q_\sigma$ is any optimal control for the discrete optimization problem \eqref{discret_optimal_reduced}, then adding them up we obtain 
		\begin{eqnarray}
			0&\le& \langle\hat q_\sigma-\bar q,\bar\varphi-\bar\varphi_\sigma \rangle_{\bar I_c\times \omega}\nonumber\\
			&=& \langle\hat q_\sigma-\bar q,\bar\varphi-\hat\varphi_\sigma \rangle_{\bar I_c\times \omega}+\langle\hat q_\sigma-\bar q,\hat\varphi_\sigma-\bar\varphi_\sigma \rangle_{\bar I_c\times \omega}\nonumber\\
			&=& \langle\hat q_\sigma-\bar q,\bar\varphi-\hat\varphi_\sigma \rangle_{\bar I_c\times \omega}+A(\bar u_\sigma-\hat u_\sigma, \hat\varphi_\sigma-\bar\varphi_\sigma)\nonumber\\
			&=& \langle\hat q_\sigma-\bar q,\bar\varphi-\hat\varphi_\sigma \rangle_{\bar I_c\times \omega}+(\bar u-\bar u_\sigma,\bar u_\sigma-\hat u_\sigma)+\beta((\bar u-\bar u_\sigma)(T),(\bar u_\sigma-\hat u_\sigma)(T))\nonumber\\
			&=& \langle\hat q_\sigma-\bar q,\bar\varphi-\hat\varphi_\sigma \rangle_{\bar I_c\times \omega}+(\bar u-\bar u_\sigma,\bar u_\sigma-\bar u)+\beta((\bar u-\bar u_\sigma)(T),(\bar u_\sigma-\bar u)(T))\nonumber\\
			&&+(\bar u-\bar u_\sigma,\bar u-\hat u_\sigma)+\beta((\bar u-\bar u_\sigma)(T),(\bar u-\hat u_\sigma)(T)).\nonumber
		\end{eqnarray}
		Therefore, there holds
		\begin{eqnarray}
			&&\|\bar u-\bar u_\sigma\|_{L^2(I;L^2(\Omega))}^2+\beta\|(\bar u-\bar u_\sigma)(T)\|_{L^2(\Omega)}^2\nonumber\\
			&\leq&2 \langle\hat q_\sigma-\bar q,\bar\varphi-\hat\varphi_\sigma \rangle_{\bar I_c\times \omega}
			+\|\bar u-\hat u_\sigma\|^2_{L^2(I;L^2(\Omega))}+\beta\|(\bar u-\hat u_\sigma)(T)\|^2_{L^2(\Omega)}\nonumber\\
			&\le&2\|\hat q_\sigma-\bar q\|_{\mathcal{M}(\bar{I}_c;L^2(\omega))}\|\bar\varphi-\hat\varphi_\sigma\|_{C(\bar{I}_c;L^2(\Omega))}
			+\|\bar u-\hat u_\sigma\|^2_{L^2(I;L^2(\Omega))}+\beta\|(\bar u-\hat u_\sigma)(T)\|^2_{L^2(\Omega)}\nonumber\\
			&\le&C(h+\tau^{1\over 2}),\nonumber
		\end{eqnarray}
		where we have used Theorems \ref{Thm:state_error_1}  and  \ref{estimate_adjoint}.
		This finishes the proof.
	\end{proof}

	{\bf Acknowledgement} The first author was supported by the Strategic Priority Research Program of Chinese Academy of Sciences under grant No. XDB41000000, the National Key Basic Research Program (2022YFA1004402) and the National Natural Science Foundation of China under grant 12071468.
	
	%	\newpage
	\bibliographystyle{abbrv}
	%\bibliography{Navier-Stokes}

\begin{thebibliography}{10}
		
		
		\bibitem{Bensoussan-Lions-1984}
		A. Bensoussan and J.-L. Lions: Impulse Control and Quasi-Variational Inequalities. Bordas, Paris, 1984.
		
		\bibitem{BoulangerTrautmann-2017}
		{A. Boulanger and P. Trautmann: Sparse optimal control of the KdV-Burgers equation on a bounded domain. SIAM J. Control Optim., 55(2017), no. 6, 3673-3706.}
		
		
		\bibitem{Casas1997}
		E. Casas: Pontryagin's principle for state-constrained boundary control problems of semilinear parabolic equations. SIAM J. Control Optim., 35(1997), no. 4, 1297-1327.
		
		\bibitem{CasasClasonKunisch-2012}
		E. Casas, C. Clason and K. Kunisch: Approximation of elliptic control problems in measure spaces with sparse solutions. SIAM J. Control Optim., 50(2012), no. 4, 1735-1752.
		
		\bibitem{CasasClasonKunisch-2013}
		E. Casas, C. Clason and K. Kunisch:  Parabolic control problems in measure spaces with sparse solutions. SIAM J. Control Optim., 51(2013), no. 1, 28-63.
		
		\bibitem{CasasHerzogWachsmuth-2017}
		E. Casas, R. Herzog and G. Wachsmuth: Analysis of spatio-temporally sparse optimal control problems of semilinear parabolic equations. ESAIM Control Optim. Calc. Var., 23(2017), no. 1, 263-295.
		
		
		\bibitem{CasasKunisch-2014}
		E. Casas and K. Kunisch:  Optimal control of semilinear elliptic equations in measure spaces. SIAM J. Control Optim., 52(2014), no. 1, 339-364.
		
		
		\bibitem{CasasKunisch-2016}
		E. Casas and K. Kunisch: Parabolic control problems in space-time measure spaces. ESAIM Control Optim. Calc. Var., 22(2016), no. 2, 355-370.
		
		
		\bibitem{CasasKunisch-2017}
		{E. Casas and K. Kunisch:  Stabilization by sparse controls for a class of semilinear parabolic equations. SIAM J. Control Optim., 55(2017), no. 1, 512-532.}
		
		\bibitem{CasasKunisch-2019}
		{E. Casas and K. Kunisch: Using sparse control methods to identify sources in linear diffusion-convection equations. Inverse Problems, 35(2019), no. 11, 114002, 17 pp.}
		
		\bibitem{CasasKunisch-2019b}
		{E. Casas and K. Kunisch:  Optimal control of the two-dimensional stationary Navier-Stokes equations with measure valued controls. SIAM J. Control Optim., 57(2019), no. 2, 1328-1354.}
		
		\bibitem{CasasMateosRosch-2018}
		E. Casas, M. Mateos and A. R\"{o}sch: Improved approximation rates for a parabolic control problem with an objective promoting directional sparsity. Comput. Optim. Appl., 70(2018), no. 1, 239-266. 
		
		\bibitem{CasasMateosRosch-2017}
		E. Casas, M. Mateos and A. R\"{o}sch:  Finite element approximation of sparse parabolic control problems. Math. Control Relat. Fields, 7(2017), no. 3, 393-417. 
		
		\bibitem{CasasVexlerZuazua-2015} 
		E. Casas, B. Vexler and E. Zuazua: Sparse initial data identification for parabolic PDE and its finite element approximations. Math. Control Relat. Fields, 5(2015), no. 3, 377-399.
		
		\bibitem{Chebotarev-2009}
		A. Yu. Chebotarev: Impulse control of temperature in a free convection model. Comput. Math. Math. Physics, 49(2009), no. 4, 594-601.
		
		\bibitem{PGC1978}
		{P.-G. Ciarlet: The Finite Element Methods for Elliptic Problems. 1st ed., Vol. 4, North-Holland, 1978.}
		
		\bibitem{ClasonKunisch2011}
		{C. Clason and K. Kunisch: A duality approach to elliptic control problems in non-reflexive Banach spaces. ESAIM: COCV, 17(2011), 243-266.}
		
		\bibitem{ClasonKunisch2012}
		C. Clason and K. Kunisch: A measure space approach to optimal source placement. Comput. Optim. Appl., 53(2012), 155-171.
		
		%\bibitem{Clement1975}
		%P. Cl\'{e}ment, Approximation by finite element functions using local regularization, RAIRO Anal. Numer., 9(1975), pp. 77-84.
		
		\bibitem{CocliteGaravello-2017}
		G.-M. Coclite and A. Garavello: A time-dependent optimal harvesting problem with measure-valued solutions. SIAM J. Control Optim., 55(2017), no. 2, 913-935.
		
		\bibitem{DanielsHinzeVierling2015} 
		N. von Daniels, M. Hinze and M. Vierling: Crank-Nicolson time stepping and variational discretization of control-constrained parabolic optimal control problems. SIAM J. Control Optim., 53(2015), 1182-1198.
		
		\bibitem{DuanWang-2020}
		Y.-L. Duan and L.-J. Wang: Minimal norm control problems governed by semilinear heat equation with impulse control. J. Optim. Theory Appl., 184(2020), 400-418.
		
		\bibitem{DuanWangZhang-2019}
		Y. Duan, L. Wang and C. Zhang: Minimal time impulse control of an evolution equation. J. Optim. Theory Appl., 183(2019), no. 3, 902-919.
		
		\bibitem{IER1999}
		I. Ekeland and R. Temam: Convex Analysis and Variational Problems. Classics Appl. Math. 28, SIAM, Philadelphia, 1999.
		
		
		\bibitem{JEJRSC}
		J. Elschner, J. Rehberg, and G. Schmidt: Optimal regularity for elliptic transmission problems
		including $C^1$ interfaces. Interfaces Free Bound., 9(2007), 233-252.
		
		\bibitem{LC} 
		L.-C. Evans: Partial Differential Equations. Vol. 19, AMS, 2002.
		
		
		\bibitem{Gong-2013}
		W. Gong: Error estimates for finite element approximations of parabolic equations with measure data. Math. Comput., 82(2013), 69-98.
		
	    \bibitem{Gong-Hinze-Zhou-2014}
		W. Gong, M. Hinze and Z.-J. Zhou: A priori error analysis for finite element approximation of parabolic optimal control problems with pointwise control. SIAM J. Control Optim., 52(2014), no. 1, 97-119.
		
		\bibitem{Gong-Li-2018}
		{W. Gong and B.-Y. Li: Improved error estimates for semi-discrete finite element solutions of parabolic Dirichlet boundary control problems. IMA J. Numer. Anal., 40(2020), no. 4, 2898-2939.}
		
		\bibitem{Gong-Liu-Yan-2018}
		W. Gong, H.-P. Liu and N.-N. Yan: Adaptive finite element method for parabolic equations with Dirac measure. Comput. Methods Appl. Mech. Engrg., 328(2018), 217-241.
		
		\bibitem{HerbergHinze2020}
		E. Herberg and M. Hinze: Variational discretization approach applied to an optimal control problem with bounded measure controls, Radon Series on Computational and Applied Mathematics
		Volume 29, ``Optimization and Control for Partial Differential Equations", 2022.
		
		\bibitem{HerbergHinzeSchumacher2020}
		E. Herberg, M. Hinze and H. Schumacher: Maximal discrete sparsity in parabolic optimal control with measures.  Math. Control Relat. Fields, 10(2020), no. 4, 735-759.
		
		
		\bibitem{HerzogStadlerWachsmuth2012}
		R. Herzog, G. Stadler and G. Wachsmuth: Directional sparsity in optimal control of partial differential equations. SIAM J. Control Optim., 50(2012), no. 2, 943-963.
		
		\bibitem{Hinze-2005} 
		M. Hinze: A variational discretization concept in
		control constrained optimization: the linear-quadratic case.
		Comput. Optim. Appl., 30(2005), 45-63.
		
		\bibitem{Khapalov-1996}
		A. Khapalov: Exact controllability of second-order hyperbolic equations with impulse controls. Appl. Anal., 63(1996), 223-238.
		
		\bibitem{KunischPieperVexler-2014}
		K. Kunisch, K. Pieper and B. Vexler: Measure valued directional sparsity for parabolic optimal control problems. SIAM J. Control Optim., 52(2014), no. 5, 3078-3108.
		
		
		\bibitem{KunischTrautmannVexler-2016}
		K. Kunisch, P. Trautmann and B. Vexler: Optimal control of the undamped linear wave equation with measure valued controls. SIAM J. Control Optim., 54(2016), no. 3, 1212-1244.
		
		\bibitem{Lange-1983}
		S. Lang: Real Analysis. 2nd ed., Addison-Wesley, Reading, MA, 1983.
		
		\bibitem{Leykekhman-Vexler-2013}
		D. Leykekhman and B. Vexler: Optimal a priori error estimates of parabolic optimal control problems with pointwise control. SIAM J. Numer. Anal., 51(2013), 2797-2821.
		
		\bibitem{Leykekhman-Vexler-2016}
		D. Leykekhman and B. Vexler: A priori error estimates for three dimensional parabolic optimal control problems with pointwise control. SIAM J. Numer. Anal. 54(2016), 2403-2435.
		
		
		\bibitem{Lions-1971}
		J.-L. Lions: Optimal Control of Systems Governed by Partial Differential Equations, Springer, Berlin, 1971.
		
		\bibitem{LionsMagenes-1972}
		J. L. Lions and E. Magenes: Non-Homogeneous Boundary Value Problems and Applications. Springer-Verlag, Berlin, 1972.
		
		\bibitem{Meidner-Rannacher-Vexler-2011}
		D. Meidner, R. Rannacher, B. Vexler: A priori error estimates for finite element discretizations of parabolic optimization problems with pointwise state constraints in time. SIAM J. Control Optim., 49(2011), no. 5, 1961-1997.
		
		\bibitem{Meidner-Vexler-2008}
		D. Meidner and B. Vexler: A priori error estimates for space-time finite element discretization of parabolic optimal control problems. II. Problems with control constraints. SIAM J. Control Optim., 47(2008), no. 3, 1301-1329.
		
		\bibitem{Meidner-Vexler-2011}
		D. Meidner and B. Vexler: A priori error analysis of the Petrov-Galerkin Crank-Nicolson scheme for parabolic optimal control problems. SIAM J. Control Optim., 49(2011), no. 5, 2183-2211. 
		
		\bibitem{LM2009}
		L. Meziani: On the dual space $C_0^*(S, X)$. Acta Math. Univ. Comenian. (N.S.), 78(2009), 153-160.
		
		\bibitem{Phung-Wang-Xu-2017}
		K.-D. Phung, G.-S. Wang and Y.-S. Xu: 
		Impulse output rapid stabilization for heat equations. J. Differential Equations, 263(2017), 5012-5041.
		
		\bibitem{Qin-Wang-2017}
		S.-L. Qin and G.-S. Wang: Controllability of impulse controlled systems of heat equations coupled by constant matrices. J. Differential Equations, 263(2017), 6456-6493.
		
		
		\bibitem{Rosch-2004}
		A. R\"osch: Error estimates for parabolic optimal control problems with control constraints. Z. Anal. Anwendungen, 23(2004), 353-376.
		
		\bibitem{Stadler2009}
		G. Stadler: Elliptic optimal control problems with $L^1$-control cost and applications for the placement of control devices. Comput. Optim. Appl., 44(2009), no. 2, 159-181.
		
		\bibitem{Thomee-2006}
		V. Thom\'ee: Galerkin Finite Element Methods for Parabolic Problems. 2nd ed., Springer Series in Computational Mathematics, vol. 25, Springer-Verlag, Berlin, 2006.
		
		\bibitem{TrautmannVexlerZlotnik-2018}
		P. Trautmann, B. Vexler and A. Zlotnik: Finite element error analysis for measure-valued optimal control problems governed by a 1D wave equation with variable coefficients. Math. Control Relat. Fields, 8(2018), no. 2, 411-449.
		
		\bibitem{Trelat-Wang-Zhang-2016}
		E. Tr\'elat, L.-J. Wang and Y.-B. Zhang: Impulse and sampled-data optimal control of heat equations, and error estimates. SIAM J. Control Optim., 54(2016), no. 5, 2787-2819.
		
		\bibitem{VexlerLeykekhmanWalter2020}
		B. Vexler, D. Leykekhman and D. Walter: Numerical analysis of sparse initial data identification for parabolic problems. ESAIM: M2AN, 54(2020), no. 4, 1139-1180.  
		
		
		\bibitem{VexlerZlotnikTrautmann-2018}
		B. Vexler, A. Zlotnik and P. Trautmann: On a finite element method for measure-valued optimal control problems governed by the 1D generalized wave equation. C. R. Math. Acad. Sci. Paris, 356(2018), no. 5, 523-531.
		
		
		\bibitem{Yan-2016}
		Q.-S. Yan: Periodic optimal control problems governed by semilinear parabolic equations with impulse control. Acta Mathematica  Scientia, 36B(2016), no. 3, 847-862.
		
		\bibitem{Yang-2001}
		T. Yang: Impulse Control Theory. Springer-Verlag, Berlin, Heidelberg, 2001.
		
		\bibitem{Yong-Zhang-1992}
		J. Yong and P. Zhang: Necessary conditions of optimal impulse controls for distributed parameter systems. Bull. Aust. Math. Soc., 45(1992), 305-326.
		
		\bibitem{YuHuangLiu-2019}
		X. Yu, J.-F. Huang and K.-S. Liu: Finite element approximations of impulsive optimal control problems for heat equations. J. Math. Anal. Appl., 477(2019), no. 1, 250-271.
		
		
		
	\end{thebibliography}

\end{document}